\DeclareFontFamily{OMX}{MnSymbolE}{}
\DeclareSymbolFont{MnLargeSymbols}{OMX}{MnSymbolE}{m}{n}
\DeclareFontShape{OMX}{MnSymbolE}{m}{n}{
	<-6>  MnSymbolE5
	<6-7>  MnSymbolE6
	<7-8>  MnSymbolE7
	<8-9>  MnSymbolE8
	<9-10> MnSymbolE9
	<10-12> MnSymbolE10
	<12->   MnSymbolE12
}{}
\DeclareFontShape{OMX}{MnSymbolE}{b}{n}{
	<-6>  MnSymbolE-Bold5
	<6-7>  MnSymbolE-Bold6
	<7-8>  MnSymbolE-Bold7
	<8-9>  MnSymbolE-Bold8
	<9-10> MnSymbolE-Bold9
	<10-12> MnSymbolE-Bold10
	<12->   MnSymbolE-Bold12
}{}
\let\llangle\@undefined
\let\rrangle\@undefined
\DeclareMathDelimiter{\llangle}{\mathopen}%
{MnLargeSymbols}{'164}{MnLargeSymbols}{'164}
\DeclareMathDelimiter{\rrangle}{\mathclose}%
{MnLargeSymbols}{'171}{MnLargeSymbols}{'171}
\newtheorem{theorem}{Theorem}[section]
\newtheorem{lemma}[theorem]{Lemma}
\newtheorem{proposition}[theorem]{Proposition}
\newtheorem{corollary}[theorem]{Corollary}
\newtheorem{conjecture}[theorem]{Conjecture}
\theoremstyle{definition}
\newtheorem{definition}[theorem]{Definition}
\newtheorem{remark}[theorem]{Remark}
\newtheorem{example}{Example}
\newcommand{\Crm}{\mathrm{C}}
\newcommand{\Lrm}{\mathrm{L}}
\newcommand{\Wrm}{\mathrm{W}}
\newcommand{\Acal}{\mathcal{A}}
\newcommand{\Dcal}{\mathcal{D}}
\newcommand{\Mcal}{\mathcal{M}}
\newcommand{\Ycal}{\mathcal{Y}}
\newcommand{\Bfrak}{\mathfrak{B}}
\newcommand{\Mbf}{\mathbf{M}}
\newcommand{\Rbf}{\mathbf{R}}
\newcommand{\Ybf}{\mathbf{Y}}
\newcommand{\Abb}{\mathbb{A}}
\renewcommand{\Bbb}{\mathbb{B}}
\newcommand{\Nbb}{\mathbb{N}}
\newcommand{\Pbb}{\mathbb{P}}
\newcommand{\Sbb}{\mathbb{S}}
\newcommand{\Tbb}{\mathbb{T}}
\newcommand{\Zbb}{\mathbb{Z}}
\DeclareMathOperator{\esssup}{ess\,sup}
\DeclareMathOperator{\id}{id}
\DeclareMathOperator{\graph}{graph}
\DeclareMathOperator{\curl}{curl}
\DeclareMathOperator{\spn}{span}
\DeclareMathOperator{\supp}{supp}
\newcommand{\Leb}{\mathscr L}
\newcommand{\area}[1]{\langle\, #1 \, \rangle}
\newcommand{\areaB}[1]{\Big\langle\, #1 \, \Big\rangle}
\newcommand{\set}[2]{\{\, #1 \;\textup{\textbf{:}} \; #2 \,\}}
\newcommand{\setn}[2]{\{#1 \, \textup{\textbf{:}} \, #2\}}
\newcommand{\setb}[2]{\bigl\{\, #1 \, \textup{\textbf{:}}\, \, #2 \,\bigr\}}
\newcommand{\setB}[2]{\Bigl\{\,\, #1 \ \textup{\textbf{:}}\ \, #2 \,\Bigr\}}
\newcommand{\setBB}[2]{\biggl\{\,\, #1 \ \textup{\textbf{:}}\ \, #2 \,\,\biggr\}}
\newcommand{\bpr}[1]{[ #1 ]}	
\newcommand{\bbpr}[1]{[\![ #1 ]\!]}	
\newcommand{\dpr}[1]{\langle #1 \rangle}
\newcommand{\dprb}[1]{\bigl\langle #1 \bigr\rangle}
\newcommand{\dprBB}[1]{\biggl\langle #1 \biggr\rangle}
\newcommand{\ddprb}[1]{\llangle#1\rrangle}
\newcommand{\cl}[1]{\overline{#1}}
\newcommand{\dd}{\;\mathrm{d}}
\newcommand{\N}{\mathbb{N}}
\newcommand{\R}{\mathbb{R}}
\newcommand{\loc}{\mathrm{loc}}
\newcommand{\sym}{\mathrm{sym}}
\newcommand{\per}{\mathrm{per}}
\newcommand{\reg}{\mathrm{Reg}}
\newcommand{\sing}{\mathrm{Sing}}
\newcommand{\toweak}{\rightharpoonup}
\newcommand{\toweakstar}{\overset{*}\rightharpoonup}
\newcommand{\toweakstarY}{\overset{*}\rightharpoondown}
\newcommand{\todown}{\downarrow}
\newcommand{\embed}{\hookrightarrow}
\newcommand{\cembed}{\overset{c}{\embed}}
\newcommand{\BigO}{\mathrm{\textup{O}}}
\newcommand{\sbullet}{\begin{picture}(1,1)(-0.5,-2)\circle*{1.5}\end{picture}}
\newcommand{\frarg}{\,\sbullet\,}
\newcommand{\BD}{\mathrm{BD}}
\newcommand{\BDY}{\mathbf{BDY}}
\newcommand{\toY}{\overset{\Ybf}{\to}}
\newcommand{\toYY}{\overset{\Ybf^2}{\to}}
\newcommand{\eps}{\epsilon}
\DeclareMathOperator{\Tan}{Tan}
\newcommand{\proofstep}[1]{\textit{#1}}
\def\thetant#1{\mathchoice 
	{\XXint\displaystyle\textstyle{#1}}%
	{\XXint\textstyle\scriptstyle{#1}}%
	{\XXint\scriptstyle\scriptscriptstyle{#1}}%
	{\XXint\scriptscriptstyle\scriptscriptstyle{#1}}%
	\!\int} 
\def\XXint#1#2#3{{\setbox0=\hbox{$#1{#2#3}{\int}$} 
		\vcenter{\hbox{$#2#3$}}\kern-.5\wd0}} 
\def\dashint{\,\thetant-}
\newcommand{\restrict}{\mathbin{\vrule height 1.35ex depth 0pt width
		0.14ex\vrule height 0.16ex depth 0pt width 0.6ex}}
\def\@tocline#1#2#3#4#5#6#7{\relax
	\ifnum #1>\c@tocdepth 
	\else
	\par \addpenalty\@secpenalty\addvspace{#2}%
	\begingroup \hyphenpenalty\@M
	\@ifempty{#4}{%
		\@tempdima\csname r@tocindent\number#1\endcsname\relax
	}{%
		\@tempdima#4\relax
	}%
	\parindent\z@ \leftskip#3\relax \advance\leftskip\@tempdima\relax
	\rightskip\@pnumwidth plus4em \parfillskip-\@pnumwidth
	#5\leavevmode\hskip-\@tempdima
	\ifcase #1
	\or\or \hskip 1em \or \hskip 2em \else \hskip 3em \fi%
	#6\nobreak\relax
	\dotfill\hbox to\@pnumwidth{\@tocpagenum{#7}}\par
	\nobreak
	\endgroup
	\fi}
\renewcommand{\eps}{\varepsilon}
\renewcommand{\phi}{\varphi}
\newcommand{\M}{\mathcal M}
\DeclareMathOperator{\Y}{\bf Y}
\DeclareMathOperator{\E}{\bf E}
\newcommand{\A}{\mathcal A}
\DeclareMathOperator{\B}{\mathcal B}
\newcommand{\Ahom}{{\A-\mathrm{hom}}}
\title[]{Generalized multi-scale Young measures} 
\author[A.~Arroyo-Rabasa]{Adolfo Arroyo-Rabasa}
\date{\today}
\address{Mathematics Institute, The University of Warwick, United Kingdom.}
\email{\href{mailto:adolfo.arroyo-rabasa@warwick.ac.uk}{adolfo.arroyo-rabasa@warwick.ac.uk}}
\author[J.~Diermeier]{Johannes Diermeier}
\date{\today}
\address{Institute of Applied Mathematics, University of Bonn, Bonn, Germany.}
\email{\href{mailto:diermeier@iam.uni-bonn.de}{diermeier@iam.uni-bonn.de}}
\subjclass[2010]{Primary 49Q20; Secondary 28A33,28A50,49K20.}
\keywords{generalized young measure, multi-scale young measure, homogenization, $\Gamma$-limit, $\Acal$-free measure}
\thanks{A.~A.-R. has been supported by a scholarship from the \emph{Bonn International Graduate School}, the \emph{German Science Foundation} through the Emmy Noether junior research group DFG-grant {BE 5922/1-1}, and has also received funding from the European Research Council (ERC) under the European Union's Horizon 2020 research and innovation programme under grant agreement No 757254 (SINGULARITY). 
	J. D. has been  supported by the \textit{Deutsche Forschungsgemeinschaft} through CRC 1060: The Mathematics of Emergent Effects.}
\begin{document}
	\maketitle
	\begin{abstract}
		This paper is devoted to the construction of generalized multi-scale Young measures, which are the extension of Pedregal's multi-scale Young measures [Trans. Amer. Math. Soc. 358 (2006), pp. 591--602]  to the setting of generalized Young 
		measures introduced by DiPerna and Majda [Comm. Math. Phys. 108 (1987), pp. 667--689]. As a tool for variational problems, these are well-suited objects for the study (at different length-scales) of oscillation and concentration effects 
		of convergent sequences of measures.
		Important properties of multi-scale Young measures such as compactness, representation of non-linear compositions, localization principles, and differential constraints are extensively developed in the second part of this paper.  As an application, we use this framework to address the $\Gamma$-limit characterization of the homogenized limit of convex integrals defined on spaces of measures satisfying a general linear PDE constraint. 
	\end{abstract} 
	
	\tableofcontents

	\section{Introduction}

	The notion of \emph{generalized surfaces} introduced by Young~\cite{young1937generalized-cur,young1942generalized-sur,young1942generalized-surII}, and known today as Young's measures, rests on the fundamental idea to consider functions as graphs.
	%
	Young realized that the weak convergence of the graphs of a sequence of functions carries 
	substantially more information than the weak convergence of the functions themselves. In fact, this was the cornerstone leading to the 
	following fundamental principle of Young measures:
	Let $(u_k)_{k\in\N} \subset \Lrm^1(\Omega;\R^N)$ be a uniformly bounded sequence. Then (up to passing to a subsequence) there exists a family of probability measures $\{\nu_x\} \subset \mathrm{Prob}(\R^N)$, parameterized by $x$ in $\Omega$, such that 
	\begin{equation}\label{eq:Y1}
	g(u_{k}) \toweak u_g \quad \text{for all $\Lrm^1(\Omega)$}\quad \text{for some $g \in \Crm_c(\R^N)$}\,,
	\end{equation}
	where
	\begin{equation}\label{eq:Y2}
	u_g = \dashint_{\Omega} g(z) \dd \nu_x(z)\,.
	\end{equation}

	In the field of applications, the seminal work of M\"uller \cite{muller1999variational-mod} (among many others) adds up to a fair amount of applications in optimal design where the framework of
	Young measures plays a fundamental role in their development. 
	However, as pointed out by Tartar~\cite{tartar1995beyond-young-me} and Pedregal~\cite{pedregal2006multi-scale-you}, Young measures have their own drawbacks and 
	limitations. The \emph{first} important limitation is their incapability to keep track of concentration of mass (mass carried by the sequence may escape 
	to infinity, in $\R^N$, while leaving the limit in~\eqref{eq:Y1} unchanged). To solve this issue, DiPerna \& Majda adapted Young's ideas using a compactification of $\R^N$. They   
	extended classical Young measures to what today is known as generalized Young measures \cite{diperna1987oscillations-an} (see also \cite{alibert1997non-uniform-int}), which are capable of representing more general limits than~\eqref{eq:Y1}. More precisely, extending~\eqref{eq:Y1} to the representation of limits $\mu_f$ of the form 
	\begin{equation}\label{eq:Y3}
	f(u_k) \toweakstar \mu_f \quad \text{in $\M(\Omega)$ \quad among integrands satisfying $\tilde f \in \Crm(\beta \R^N)$},
	\end{equation}
	with $\tilde f(z) \coloneqq f(z)/(1 + |z|)$, and where $\beta \R^N$ is the Stone--\v Cech compactification of $\R^N$. Here, we omit the representation 
	formula for generalized Young measures as it is substantially more involved than~\eqref{eq:Y2}; we shall postpone this to the Appendix where for the convenience of the reader we give a 
	brief sketch of the construction and properties of (the different notions of) Young measures. The \emph{second} main drawback of Young's 
	construction is the failure to record patterns such as the direction or speed where oscillation (and/or concentration) occurs. This is easily illustrated by the following 
	one dimensional example. Fix a positive real $\alpha$ and consider the purely oscillatory sequence
	\[
	u_k(x) = \sin(k^\alpha x), \qquad k = 1,2,\dots
	\]
	Clearly, the choice of $\alpha$ significantly changes the length-scale period at which oscillations occur as $k$ tends to $\infty$. However, regardless of the 
	choice of $\alpha$, a change of variables argument shows the associated Young measure to this sequence is given by the family $\{\nu_x\}_{x \in \R}$ where $\nu_x = \nu_0$ for 
	almost every $x  \in \Omega$, and where $\nu_0$ is the probability measure satisfying
	\[
	\dpr{\nu_0,g} = \frac{1}{2\pi}\int_0^{2\pi} g(\sin y) \dd y \quad \text{for all $g \in \Crm_c(\R)$}\,.
	\]
	Hence, there is a need to extend the notion of Young measure to one that incorporates the dependence of the parameter $\alpha$. To record this information, Pedregal 
	considered the joint Young measure $\upsilon$ associated to the sequences of pairs 
	\[
	w_k(x) \coloneqq (\area{\chi_k},u_k) : x \mapsto (\area{kx},u_k(x))\,,
	\]   
	where $\area{x}$ denotes the equivalence class of the vector $x \in \R^d$ in the $d$-dimensional flat torus $Z \coloneqq \R^d/\Zbb^d$. {After performing a slicing argument, $\upsilon_x$ decomposes into} $\upsilon_x = \pi_\# \nu_x \otimes \nu_{x,\xi}$, where $\pi : Z \times \R^N \to Z$ is the canonical projection on the torus. {Pedregal} introduced the resulting family $\{\nu_{x,\xi}\}$ of probability measures on $\R^N$ as the associated \emph{two-scale Young measure}, in turn, designed to represent weak-limits of the form
	\begin{equation}
	g(w_k) \toweak U_g \quad \text{in $\Lrm^1(\Omega)$}\quad \text{for integrands $g \in \Crm_c(Z \times \R^N)$}\,.
	\end{equation}
	Similarly to the case of generalized Young measures, a sketch of Pedregal's construction is further discussed in the Appendix.   
	
	We are now in a position to give a rough description of the content and goals of this work. 
	
	\subsection{Main results}

	The \emph{first} goal of this paper is to introduce \emph{generalized multi-scale} Young measures (\enquote{multi-scale*} throughout the text for short) in the following sense. We combine the ideas of 
	DiPerna \& Majda with the approach from Pedregal to construct a {new type of Young measures}, capable of dealing with oscillation-concentration effects while also quantifying the length-scales where these phenomena occur. 
	For simplicity we shall restrict only to two-scale* Young measures. Effectively, we introduce a measure-theoretic tool to represent weak-$*$ limits of the form
	\begin{equation}\label{eq:Y5}
	f(\chi_k,u_k) \toweakstar \mu_f \quad \text{in $\M(\Omega)$ \quad amongst integrands $f$ satisfying $\tilde f \in \Crm(Z \times \beta \R^N)$}\,,
	\end{equation}
	where $\tilde f(\xi,z) \coloneqq f(\xi,z)/(1 + |z|)$. Next, we give the rigorous definition
	and state some of the main properties of two-scale* Young measures.
	
	\begin{definition}[two-scale* Young measures]
		A four-tuple $\bm{\nu} = (\nu,\lambda,\rho,\nu^\infty)$ is called a two-scale* Young measure on $\Omega$ with values in $E$ (a finite dimensional euclidean space) provided that
		\begin{enumerate}[(i)]
			\item $\nu$ is a weak-$*$ measurable from $\cl \Omega \times Z$ into the set of probability measures over $E$ such that the map
			$(x,\xi) \mapsto \dpr{\nu_{x, \xi}, |\frarg|}$ belongs to $\Lrm^1(\cl\Omega \times Z)$, 
			\item $\lambda$ is a bounded positive measure on $\cl \Omega$,
			\item $\rho$ is a weak-$*$ $\lambda$-measurable map from $\cl \Omega$ to the
			set of probability measures on the $d$-dimensional torus,
			\item $\nu^\infty$ is a weak-$*$ ($\lambda \otimes \rho_x)$-measurable from 
			$\cl \Omega \times Z$ into the set of probability measures over $\Sbb_E$ (the sphere of radius one in $E$).\footnote{The semi-product $\lambda \otimes \nu_x$, between a positive measure $\lambda \in \M(\Omega)$ and a weak-$*$ $\lambda$-measurable map $\nu : \Omega \to \mathrm{Prob}(K) : x \mapsto \nu_x$, is the measure of  $\Omega \times K$ defined as
				\[
				(\mu \otimes \nu_x )(U) \coloneqq \int_\Omega \int_K \chi_U(x,z) \dd \nu_x(z) \dd \mu(x), \qquad \text{for all $U \in \B(\Omega \times K)$}.
				\]
			}
		\end{enumerate}
		We denote the set of two-scale* Young measures by $\Y^2(\Omega;E)$. 
	\end{definition}

	\subsubsection{Representation via two-scale* Young measures}
	Here and in what follows \enquote{$\eps \searrow 0$} will denote a sequence of positive reals converging to zero, which heuristically shall represent a microscopic length-scale.  

	\begin{theorem}[representation]\label{thm:representation} Let $(\mu_\eps)_\eps\subset \M(\Omega;E)$ be a sequence of vector-valued measures with uniformly bounded total variation, i.e., 
		\[
		\sup_\eps |\mu_\eps|(\Omega) < \infty\,.
		\]
		The following representation result holds up to taking a subsequence of $(\mu_\eps)_\eps$. 
		There exists a two-scale* Young measure $\bm{\nu} = (\nu,\lambda,\rho,\nu^\infty) \in \Y^2(\Omega;E)$ satisfying 
		the following fundamental property. Let $f : \Omega \times \Tbb^d \times E \to \R$ be a continuous integrand for which the recession function 
		\[
		f^\infty(x,\xi,z) \coloneqq \lim_{\substack{(x',\xi',z') \to (x,\xi,z)\\t \to \infty}} \frac{f(x',\xi',tz')}{t} \qquad (x,\xi,z) \in \cl \Omega \times \Tbb^d \times E,
		\]
		exists. Then, there exists a Radon measure $\mu_f$ on $\cl \Omega$ such that 
		\[
		f(\,\frarg\,, \,{\frarg/\eps}\,,\, u_\eps\,) \, \Leb^d \restrict \Omega \; \toweakstar \; \mu_f \quad \text{as measures on $\cl \Omega$}\,,
		\]
		and $\mu_f$ is characterized by the values
		\begin{align*}
		\mu_f(\omega) & = \int_{\omega} \Big( \int_{\Tbb^d} \Big( \int_E f(x,\xi , z) \dd \nu_x(z) \Big) \dd \xi \Big)  \dd x \\
		& 	\qquad + \; \int_{\cl\omega} \Big(\int_{\Tbb^d} \Big(\int_{\Bbb_E} f^\infty(x,\xi,z) \dd \nu_x^\infty(z) \Big) \dd \rho_x(\xi) \Big) \dd \lambda(x)\,,
		\end{align*}
		where $\omega$ ranges among all Borel subsets of $\cl \Omega$. In this case we say that $(\mu_\eps)_\eps$ generates the two-scale*  Young measure $(\nu,\lambda,\rho,\nu^\infty)$, in symbols
		\[
		\mu_\eps \toYY \bm{\nu}\,.
		\]
	\end{theorem}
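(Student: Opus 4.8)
The plan is to lift the sequence to a bounded family of positive measures on a fixed compact metric space, extract a weak-$*$ limit, and then disintegrate it. Since $(\mu_\eps)_\eps$ is bounded in total variation and the conclusion is phrased through the densities, we work with $\mu_\eps = u_\eps\,\Leb^d\restrict\Omega$, $\sup_\eps\norm{u_\eps}_{\Lrm^1(\Omega;E)} < \infty$. Let $\mathbf{E}$ denote the space of admissible integrands $f \in \Crm(\cl\Omega\times\Tbb^d\times E)$ whose strong recession $f^\infty$ exists, normed by $\norm{f} \coloneqq \sup\abs{\tilde f}$, $\tilde f(x,\xi,z) \coloneqq f(x,\xi,z)/(1+\abs z)$. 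Identifying the sphere compactification of $E$ (the relevant one for strong-recession integrands) with the closed unit ball $\cl\Bbb_E$ via $z\mapsto z/(1+\abs z)$, the points at infinity forming $\Sbb_E$, the map $f\mapsto\tilde f$ is an isometric isomorphism of $\mathbf{E}$ onto the separable Banach space $\Crm(\cl\Omega\times\Tbb^d\times\cl\Bbb_E)$. For each $\eps$ let $\gamma_\eps\in\M^+(\cl\Omega\times\Tbb^d\times\cl\Bbb_E)$ be the push-forward of $(1+\abs{u_\eps})\,\Leb^d\restrict\Omega$ under $x\mapsto\bigl(x,\,[x/\eps],\,u_\eps(x)/(1+\abs{u_\eps(x)})\bigr)$, $[\,\cdot\,]$ being the class modulo $\Zbb^d$. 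Then $\int\tilde f\dd\gamma_\eps = \int_\Omega f(x,x/\eps,u_\eps(x))\dx$ for all $f\in\mathbf{E}$ and $\gamma_\eps(\cl\Omega\times\Tbb^d\times\cl\Bbb_E) = \Leb^d(\Omega) + \norm{u_\eps}_{\Lrm^1} \le C$.

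Because the predual $\Crm(\cl\Omega\times\Tbb^d\times\cl\Bbb_E)$ is separable, bounded subsets of $\M(\cl\Omega\times\Tbb^d\times\cl\Bbb_E)$ are weak-$*$ sequentially compact, so after extracting a subsequence $\gamma_\eps\toweakstar\gamma$. Pairing $\gamma_\eps$ against $\phi\cdot\tilde f$ with $\phi\in\Crm(\cl\Omega)$ (using that $\phi f\in\mathbf{E}$) gives $f(\cdot,\cdot/\eps,u_\eps)\,\Leb^d\restrict\Omega\toweakstar\mu_f$ as measures on $\cl\Omega$, where $\mu_f$ is the push-forward of $\tilde f\,\gamma$ under the projection $\pi$ onto $\cl\Omega$; in particular $\mu_f(\omega) = \int_{\pi^{-1}(\omega)}\tilde f\dd\gamma$ for every Borel $\omega\subseteq\cl\Omega$. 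It then remains to turn this single identity into the claimed two-term formula.

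Split $\gamma = \gamma^{\mathrm o} + \gamma^\infty$ according to whether the last coordinate lies in $\Bbb_E$ or in $\Sbb_E$; on $\Sbb_E$ the extension of $\tilde f$ is $f^\infty$, so $\int\tilde f\dd\gamma^\infty = \int f^\infty\dd\gamma^\infty$. Transporting $\gamma^{\mathrm o}$ back to $\cl\Omega\times\Tbb^d\times E$ by $y\mapsto y/(1-\abs y)$ and reweighting by $(1+\abs z)^{-1}$ yields a finite positive measure $\vartheta$ on $\cl\Omega\times\Tbb^d\times E$ with $\int f\dd\vartheta = \int\tilde f\dd\gamma^{\mathrm o}$ for all $f\in\mathbf{E}$. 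The key claim is that the $\cl\Omega\times\Tbb^d$--marginal of $\vartheta$ equals $\Leb^d\restrict\Omega\otimes\Leb^d\restrict\Tbb^d$; granting it, the disintegration theorem on this compact metric space produces a weak-$*$ measurable kernel $(x,\xi)\mapsto\nu_{x,\xi}\in\mathrm{Prob}(E)$ with $\vartheta = (\Leb^d\restrict\Omega\otimes\Leb^d\restrict\Tbb^d)\otimes\nu_{x,\xi}$; testing $f = \abs{\,\cdot\,}$ against $\gamma_\eps\toweakstar\gamma$ moreover gives $\int\abs z\dd\vartheta = \lim_\eps\norm{u_\eps}_{\Lrm^1} - \gamma^\infty(\cl\Omega\times\Tbb^d\times\Sbb_E) < \infty$, so $(x,\xi)\mapsto\dpr{\nu_{x,\xi},\abs{\,\cdot\,}}\in\Lrm^1(\cl\Omega\times\Tbb^d)$. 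This yields the first term. For the concentration part, set $\lambda\coloneqq\pi_\#\gamma^\infty\in\M^+(\cl\Omega)$ and disintegrate $\gamma^\infty = \lambda\otimes\theta_x$, $\theta_x\in\mathrm{Prob}(\Tbb^d\times\Sbb_E)$; disintegrating $\theta_x$ once more over $\Tbb^d$ produces its torus-marginal $\rho_x\in\mathrm{Prob}(\Tbb^d)$ and the conditional kernel $(x,\xi)\mapsto\nu^\infty_{x,\xi}\in\mathrm{Prob}(\Sbb_E)$, with the required weak-$*$ measurability. Reassembling these pieces over $\pi^{-1}(\omega)$ turns $\mu_f(\omega) = \int_{\pi^{-1}(\omega)}\tilde f\dd\gamma$ into exactly the stated two-term formula, and we declare $\mu_\eps\toYY\bm\nu$ with $\bm\nu = (\nu,\lambda,\rho,\nu^\infty)$.

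The main obstacle is the key claim, i.e.\ the dichotomy that the non-concentrating part of $\gamma$ sees the torus uniformly while the concentrating part is free to carry an arbitrary probability kernel $\rho_x$ on $\Tbb^d$. One proves it by splitting $u_\eps = u_\eps\mathbf{1}_{\{\abs{u_\eps}\le M\}} + u_\eps\mathbf{1}_{\{\abs{u_\eps}>M\}}$. For the truncated (uniformly bounded) piece one pairs $f$ with products $\phi(x)\psi(\xi)g(z)$ and invokes the Riemann--Lebesgue lemma (two-scale convergence): the $1/\eps$-oscillation of $x\mapsto\psi(x/\eps)$ forces the mean $\int_{\Tbb^d}\psi$ to factor off, so the corresponding contribution to $\vartheta$ has the asserted product marginal. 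Letting $M\to\infty$ and controlling the remainder by the equi-integrability defect shows the whole of $\vartheta$ has this marginal, while all non-equi-integrable mass, after the $(1+\abs z)^{-1}$-reweighting, has escaped to $\Sbb_E$ and therefore lives in $\gamma^\infty$, where equidistribution genuinely fails: near a single $\eps$-cell the image of $x/\eps$ in $\Tbb^d$ collapses to essentially one translate, which is precisely what $\rho_x$ records. Secondary and more routine points are: the weak-$*$ measurability of the three kernels (an output of the disintegration theorem), the independence of $\mu_f$ of any countable dense family used to pin down $\gamma$ (immediate once $\gamma$ is fixed), and the bookkeeping of $\partial\Omega$ versus $\Omega$, handled throughout by working on $\cl\Omega$ and letting mass that slides to the boundary be absorbed into $\lambda$.
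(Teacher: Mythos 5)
Your proposal is correct and follows essentially the same route as the paper: lift the sequence to uniformly bounded positive measures on the compact space $\cl{\Omega\times Z\times\Bbb_E}$ via $z\mapsto z/(1+|z|)$, extract one weak-$*$ limit $\gamma$ valid for all admissible integrands at once, identify the behaviour of its $(x,\xi)$-marginal, and disintegrate; the paper organizes the same argument by proving the characterization Lemma~\ref{lem:representationformula} first (your disintegration step, done there once and for all) and then verifying in the proof of Theorem~\ref{thm:representation} that the limit functional satisfies \eqref{eq:representation}. Two remarks. First, your \enquote{key claim} that the $\cl\Omega\times\Tbb^d$-marginal of the transported interior part is $\Leb^d\restrict\Omega\otimes\Leb^d_Z$ is precisely identity \eqref{eq:representation}, and it follows in one line by testing $\gamma_\eps\toweakstar\gamma$ with $f=\phi\otimes g\otimes\chi_E$ (so that $\tilde f=\phi\otimes g\otimes(1-|\hat z|)$, which vanishes on the sphere) and the Riemann--Lebesgue lemma; your truncation detour is unnecessary and, as phrased, imprecise: the remainder over $\{|u_\eps|>M\}$ equals $\int_{\{|u_\eps|>M\}}\phi(x)g(x/\eps)\dd x$, which is $\mathrm{O}(1/M)$ uniformly in $\eps$ by Chebyshev, whereas the \enquote{equi-integrability defect} you invoke need not vanish and plays no role in this identity. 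Second, you restrict to $\mu_\eps=u_\eps\,\Leb^d\restrict\Omega$, while the paper's notion of generation is through $I^\eps_f(\mu_\eps)$ and also carries the singular term $\int_\Omega f^\infty(x,x/\eps,\tfrac{\dd\mu_\eps}{\dd|\mu_\eps^s|})\dd|\mu_\eps^s|$; your construction accommodates it verbatim by adding to $\gamma_\eps$ the push-forward of $|\mu_\eps^s|$ under $x\mapsto(x,\area{x/\eps},\tfrac{\dd\mu_\eps}{\dd|\mu_\eps^s|}(x))\in\cl\Omega\times Z\times\partial\Bbb_E$, after which the rest of your argument is unchanged.
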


	The \emph{second} objective, is to endow two-scale* Young measures with a measure-theoretic toolbox tailored for applications in the calculus of variations. Our hope is to 
	lay a transparent framework which casts the geometrical meaning of the blow-up methods (introduced by Fonseca \& M\"uller~\cite{fonseca1992quasi-convex-in}) into the context of two-scale analysis~\cite{bensoussan2011asymptotic-anal,nguetseng1989a-general-conve,allaire1992homogenization-}. 
	Based on a localization principle, this comprehends the representation of integral functionals arising from {$\Gamma$}-convergence (see~\cite{de-giorgi1975sulla-convergen,de-giorgi1984g-operators-and}) in the context of homogenization of PDE-constrained structures~\cite{braides1996homogenization-,muller1987homogenization-,braides2000mathcal-a-quasi,matias2015homogenization-}.
	
	Formally, this toolbox consists of establishing the following properties:
	
	\subsubsection{Fundamental properties of two-scale* Young measures}
	\begin{enumerate}
		\item Compactness. In a natural way, two-scale* Young measures are elements of due dual of the class of integrands $\E(\Omega;E)$ (see~Section~\ref{integrands}). In Proposition~\ref{cor:Compactnessofyoungmeasures} 
		we show the sequential \emph{ weak-$*$ compactness} (with respect to the weak-$*$ topology of $\E(\Omega;E)^*$) of {uniformly bounded} subsets $\Ycal$ of $\Y^2(\Omega;\E)$, that is, for sets such that
		\[
		\sup \; \setBB{ \int_{\cl \Omega} \int_{Z}\dpr{\nu_{\xi,x}, |\frarg|} \dd \xi \dd x \; + \; \lambda(\cl \Omega) }{(\nu,\lambda,\rho,\nu^\infty) \in \Ycal} < \infty\,.
		\] 
		This result is a fundamental step towards the proof of Theorem~\ref{thm:representation}.
		\item Localization. The relevance of Propositions~\ref{lem:locatregular} and~\ref{prop:loc_singular} is briefly explained as follows.  If a sequence $(\mu_\eps)_\eps$ generates a Young measure $(\nu,\lambda,\rho,\nu^\infty)$,
		then at $(\Leb^d + \lambda)$-almost every $x_0 \in \Omega$ we may find a \emph{blow-up sequence} of the original sequence (at $x_0$) that generates a (global) \emph{tangent two-scale* Young measure} 
		\[
		\bm{D\nu(x_0)} = (\nu_{x_0},D\lambda,\Gamma_\#^{\xi_0}\rho_{x_0},\nu^\infty_{x_0}) \quad \text{for some $\xi_0 \in Z$, and $D\lambda \in \Tan_1(\lambda,x_0)$}\,.
		\]
		Hence, extending the concept of {tangent} Young measure  introduced in~\cite{rindler2011lower-semiconti} by Rindler; see also \cite{rindler2012lower-semiconti}.   
	\end{enumerate}
	
	\subsubsection{Barycenter measures and second-scale convergence}
	
	Given a sequence $(u_j)_{k \in \Nbb}$ that generates a two-scale* Young measure $\bm \nu$, the representation Theorem~\ref{thm:representation} yields  that  
	\[
	u_j \toweakstar [\bm{\nu}]  \quad \text{as measures on $\cl \Omega$}\,,
	\]
	where $[\bm \nu]$ is the barycenter of $\bm{\nu}$ (see Definition~\ref{def:bary}). In this sense our notion of barycenter of a two-scale* Young measure coincides with 
	the one for generalized Young measures. We then define a weak-$*$ $(\Leb^d \restrict \cl \Omega + \lambda^s)$-measurable map $\bbpr{\bm{\nu}} : \cl \Omega \to \M(Z;E)$, called the \emph{second-scale barycenter} of $\bm{\nu}$, which has the property that
	\[
	\text{$\mu_\eps$ two-scale converges to $(\Leb^d + \lambda^s) \otimes \bbpr{\bm{\nu}}_x$}\,,
	\] 
	where the convergence above shall be understood as an extension of Nguetseng's concept of two-scale convergence~\cite{nguetseng1989a-general-conve,allaire1992homogenization-} (see Definitions~\ref{def:tsc} and~\ref{def:bary2}).
	In fact, via the compactness of two-scale* Young measures, we give a fairly short proof of the compactness of uniformly bounded sequences with respect to two-scale convergence; 
	see Corollary~\ref{cor:tsc}.

	\subsubsection{Structure of PDE-constrained two-scale* Young measures}
	
	Consider a homogeneous linear partial differential operator of order $k$ on $\R^d$ (with constant coefficients)  of the form 
	\[
	\A = \sum_{\substack{|\alpha| = k\\\alpha \in \N^d}}  A_\alpha\,\partial^\alpha, \qquad A_\alpha \in \textrm{Lin}(E;F).
	\]
	A vector-valued measure $\mu \in \M(\Omega;E)$ is called $\A$-free provided that 
	\[
	\A\mu = 0 \quad \text{in the sense of distributions on $\Omega$}.
	\]
	We say that two-scale* Young measure $\bm{\nu} \in \Y^2(\Omega;E)$ is $\A$-free if it is generated by a sequence of (asymptotically) $\A$-free measures. The set of such Young measures is denoted by $\Y^2_\A(\Omega;E)$. We establish the following rigidity properties of $\A$-free two-scale* Young measures:
	
	\begin{enumerate}
		\item The second-scale inherits the $\A$-free constraint. In Proposition~\ref{prop:secondbarycenterisafree} we show that the PDE-constraint is inherited also to the second-scale barycenter of any $\A$-free two-scale* Young measure (the 
		constraint holds trivially for the barycenter), that is, 
		\[
		\A\bbpr{\nu}_{x} = 0 \quad \text{in the sense of distributions on $Z$},
		\]
		for $(\Leb^d + \lambda^s)$-almost evert $x \in \cl \Omega$. The corresponding version of this result in terms of two-scale convergence is contained in Corollary~\ref{cor:Afree}.
		\item Based on the recent {developments~\cite{de-philippis2016on-the-structur} concerning} the structure of PDE-constrained measures, it further holds (see Corollary~\ref{cor:GuidoFilip}) 
		at $\lambda^s$-almost every $x \in \Omega$ that
		\[
		\frac{\dd \bbpr{\bm{\nu}}_{x}}{\dd |\bbpr{\bm{\nu}}_{x}^s|}(\xi) \in \Lambda_{\A} \quad \text{for $|\bbpr{\bm{\nu}}_{x}^s|$-almost every $\xi \in Z$},
		\]
		Here $\Lambda_\A$ is the so-called \emph{wave-cone} associated to $\A$ defined as
		\[
		\Lambda_{\A} \coloneqq \bigcup_{|\eta| =1} \ker \Abb^k(\eta) \subset E,
		\]
		and which consists of all Fourier amplitudes (vectors $z \in E$) where $\A$ is not elliptic with respect to one-directional oscillations.\footnote{Following standard notation, the $k$-homogeneous map
			\[
			\eta \mapsto \Abb^k(\eta) \coloneqq \sum_{|\alpha|=k} \eta^\alpha A_\alpha \in \textrm{Lin}(E,F), \qquad \eta \in \R^d,
			\]
			is the \emph{principal symbol associated to $\A$}. Using the Fourier transform it is immediate to verify that a vector $v \in E$ belongs to $\ker \Abb^k(\eta)$ --- for some $\eta \in \R^d \setminus \{0\}$ --- if and only if the one-directional function
			\[
			x \mapsto v \exp(2\pi\textrm{i}x \cdot \eta) \quad \text{is $\A$-free on $\R^d$}.
			\]}
		\item The support of the non-biting part of $\bm \nu$ (in the sense of Chacon; see Lemma~\ref{chacon}) is constrained by the differential constraint of $\A$. More specifically, in Lemma~\ref{lem:support} we show that
		\[
		\supp \, (\nu^\infty_{x, \xi}) \subset \spn \{\Lambda_{\A}\} \cap \partial \Bbb_E \quad \text{for $(\lambda^s \otimes \rho_x)$-almost every $(x,\xi) \in {\Omega} \times Z$}\,. 
		\] 
	\end{enumerate}

	\subsubsection{Applications to homogenization} 
	
	
	We conclude our exposition solving a particular case of homogenization for PDE-constrained measures. We start by considering a family $\{I^\eps\}_{\eps >0}$ of functionals of the form
	\begin{equation}\label{eq:hom1}
	I^\eps(\mu) \coloneqq \int_\Omega f (x,x/\eps,\mu^{ac}(x)) \dd x 
	+ \int_\Omega f^\infty(x,x/\eps,\mu_s(x)) \dd |\mu^s|(x),
	\end{equation}
	defined for measures $\mu = \mu^{ac} \Leb^d \restrict \Omega + \mu_s |\mu^s|\in \M(\Omega;\R^N)$. 
	
	The integrand $f : \Omega \times \Tbb^d \times \R^N \to \R$ is assumed to be a continuous integrand with linear-growth at infinity. Further we will require that $f(x,\xi,\frarg)$ is convex for every $x,\xi \in \Omega \times \Tbb^d$. The candidate measures $\mu \in \M(\Omega;\R^N)$ are assumed to satisfy the PDE-constraint 
	\begin{equation}\label{eq:A}
	\A\mu  = 0 \quad \text{in the sense of distributions on $\Omega$}.
	\end{equation}
	Our goal is to show that as $\eps \searrow 0$, the rapidly oscillating variable ${x}/{\eps}$ averages out and the functionals $I^\eps$ \emph{converge} (in the context of $\Gamma$-convergence, detailed below) to an \enquote{homogenized} integral  
	\begin{equation}\label{eq:hom2}
	I^\text{hom}(\mu) = \int_\Omega f_{*\Acal}(x,\mu^{ac}(x)) \dd x 
	+ \int_\Omega (f_{*\Acal})^\infty
	(x,\mu_s(x)) \dd |\mu^s|(x),
	\end{equation}
	where the integrand $f_{*\Acal}$ is characterized by means of the cell minimization problem
	%
	%
	\begin{equation}\label{eq:defhom2}
	\begin{split}
	f_{*\A}(x,z) \coloneqq \inf\setBB{\int_{Q}& f(x,y,z + w(y)) \dd y}{ \\
		& w \in \Crm^\infty(Q;E) \cap \ker \A, \int_Q w(y) \dd y = 0}.
	\end{split}
	\end{equation}

	Let us briefly recall the notions of $\Gamma$-convergence and homogeneous envelope which will be required to give sense to our problem. 
	Let $\{\eps_j\}_j$ be a sequence of positive numbers converging to zero. The \emph{$\Gamma$-limit inferior} of the \emph{sequence} of functionals $\{I^{\eps_j}\}_j$ with respect to the weak-$*$ convergence of measures is defined as
	\[
	\Gamma-\liminf  I^{\eps_j}{(\mu)}\coloneqq \inf\setB{\liminf_{j \to \infty}I^{\eps_j}(\mu_j)}{\text{$\mu_j \toweakstar \mu$ in $\M(\Omega;\R^N)$}}.
	\]
	We say that {a functional }$I$ is the \emph{$\Gamma$-limit inferior} of the \emph{family} of functionals $\{I^\eps\}_{\eps > 0}$ if 
	\[
	I = \Gamma-\liminf_{j \to \infty} I^{\eps_j} \quad \text{for every sequence $\eps_j \searrow 0$}.
	\]
	In this case, we write
	\[
	I = \Gamma-\liminf_{\eps \searrow 0} I^\eps.
	\]
	
	%

	%
	%
	
	The main homogenization result is contained in the following Theorem.
	
	\begin{theorem}[$\Gamma$-$\liminf$ of $I^\eps$]\label{thm:hom} Let $f : \Omega \times \Tbb^d \times E \to \R$ be a continuous integrand with linear-growth at infinity. Further assume that $f(x,\xi,\frarg)$ is convex for all $x \in \Omega$ and all $\xi \in \Tbb^d$. Then, the $\Gamma$-$\liminf$ of the family of functionals
		\begin{align*}
		\mu \mapsto I^\eps(\mu) =  
		\int_\Omega & f(x,x/{\eps},\mu^{ac}(x)) \dd x \\
		&  + \int_\Omega f^\infty(x,{x}/{\eps},\mu_s(x)) \dd |\mu^s|(x)\,, 
		\qquad \mu \in \M(\Omega;E) \cap \ker \A,
		\end{align*}
		with respect to the weak-$*$ convergence in $\M(\Omega;E)$, is given by the homogenized functional
		\[
		I^\textnormal{hom}(\mu) = \int_\Omega f_{*\Acal}(x,\mu^{ac}(x)) \dd x + \int_\Omega (f_{*\Acal})^\infty(x,\mu_s(x)) \dd |\mu^s|(x),
		\]
		defined for measures $\mu \in \M(\Omega;E) \cap \ker \A$.
	\end{theorem}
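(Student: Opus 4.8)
The plan is to establish the two $\Gamma$-convergence inequalities separately, for an arbitrary fixed sequence $\eps_j\searrow 0$; since no property of the particular sequence enters, this yields the statement for the whole family $\{I^\eps\}_{\eps>0}$. Throughout we may restrict to $\mu\in\M(\Omega;E)\cap\ker\A$ with $I^{\mathrm{hom}}(\mu)<\infty$, the complementary cases being vacuous because $\ker\A$ is weak-$*$ closed. The lower bound will run entirely through the two-scale* Young measure machinery of Theorem~\ref{thm:representation} combined with the $\A$-freeness of the second-scale barycenter (Proposition~\ref{prop:secondbarycenterisafree}) and Jensen's inequality; the upper bound will be a periodic-homogenization construction based on near-optimal solutions of the cell problem~\eqref{eq:defhom2}, corrected so as to remain $\A$-free.

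\emph{Lower bound.} Given $\mu_j\toweakstar\mu$ in $\M(\Omega;E)\cap\ker\A$, I would first pass to a non-relabelled subsequence with $\liminf_j I^{\eps_j}(\mu_j)=\lim_j I^{\eps_j}(\mu_j)<\infty$ and, after a truncation using convexity of $f$, with $\sup_j|\mu_j|(\Omega)<\infty$. By Theorem~\ref{thm:representation} one extracts a further subsequence such that $\mu_j\toYY\bm{\nu}$ for some $\bm{\nu}=(\nu,\lambda,\rho,\nu^\infty)$, and since each $\mu_j$ is $\A$-free, $\bm{\nu}\in\Y^2_\A(\Omega;E)$. As $f$ has linear growth its recession function $f^\infty$ exists jointly continuously, so the representation formula applies to $f$ itself and gives
\begin{align*}
\lim_j I^{\eps_j}(\mu_j)=\mu_f(\cl\Omega)
&=\int_\Omega\int_{\Tbb^d}\!\int_E f(x,\xi,z)\dd\nu_x(z)\dd\xi\dd x\\
&\qquad+\int_{\cl\Omega}\int_{\Tbb^d}\!\int_{\Bbb_E} f^\infty(x,\xi,z)\dd\nu^\infty_x(z)\dd\rho_x(\xi)\dd\lambda(x).
\end{align*}
Convexity of $f(x,\xi,\frarg)$ and of the positively $1$-homogeneous $f^\infty(x,\xi,\frarg)$ allows Jensen's inequality in the $z$-variable, replacing the inner integrals by evaluations at barycenters; these reassemble, for $(\Leb^d+\lambda^s)$-a.e.\ $x$, into the second-scale barycenter $\bbpr{\bm{\nu}}_x\in\M(Z;E)$, which is $\A$-free on $Z$ by Proposition~\ref{prop:secondbarycenterisafree} and whose mean over $Z$ equals $\mu^{ac}(x)$ on the regular set, respectively the polar $\mu_s(x)$ (up to a Radon--Nikodym density relating $\lambda^s$ and $|\mu^s|$) on the singular set. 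The lower bound thus reduces to the pointwise estimates
\begin{align*}
\int_Z f(x,\xi,\sigma^{ac}(\xi))\dd\xi+\int_Z f^\infty\Big(x,\xi,\tfrac{\dd\sigma^s}{\dd|\sigma^s|}(\xi)\Big)\dd|\sigma^s|(\xi)&\ \ge\ f_{*\A}\Big(x,{\textstyle\int_Z\dd\sigma}\Big),\\
(f_{*\A})^\infty(x,z)&=\inf\setb{\int_Z f^\infty(x,\xi,z+w)\dd\xi}{w\in\Crm^\infty(Z;E)\cap\ker\A,\ \textstyle\int_Z w=0},
\end{align*}
valid for every $\A$-free $\sigma\in\M(Z;E)$ and every $z\in E$. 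Both follow from~\eqref{eq:defhom2} by mollification on the torus: a competitor measure mollified at scale $\delta$ stays $\A$-free and mean-preserving while its energy converges (Reshetnyak continuity for the convex integrand with recession $f^\infty$), so the cell value defined through \emph{smooth} $\A$-free fields already dominates the relaxed energy carrying concentrations, and the second identity is the $1$-homogeneous rescaling of the first. Applying these with $\sigma=\bbpr{\bm{\nu}}_x$ and integrating over $\cl\Omega$ yields $\mu_f(\cl\Omega)\ge I^{\mathrm{hom}}(\mu)$, hence $\Gamma-\liminf_{j\to\infty} I^{\eps_j}(\mu)\ge I^{\mathrm{hom}}(\mu)$. (Alternatively one may localize via Propositions~\ref{lem:locatregular} and~\ref{prop:loc_singular}, blow up at $(\Leb^d+\lambda^s)$-a.e.\ point, and run a frozen-coefficient homogenization argument on the unit cell.)

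\emph{Upper bound.} It remains to produce $\mu_j\toweakstar\mu$ with $\limsup_j I^{\eps_j}(\mu_j)\le I^{\mathrm{hom}}(\mu)$. Since the $\Gamma$-lower limit is automatically weak-$*$ lower semicontinuous and $I^{\mathrm{hom}}$ is continuous along area-strict approximations (its integrand $f_{*\A}$ being continuous with recession $(f_{*\A})^\infty$), by diagonalization it suffices to build recovery sequences for $\mu$ in a dense-in-energy class — for instance measures with smooth absolutely continuous part and singular part a finite laminar combination supported on hyperplanes with normals in $\Lambda_\A$, admissible by the structure theory of \cite{de-philippis2016on-the-structur}. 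On a fine partition $\{Q_i\}$ of $\Omega$, I would freeze $x$ at the centre $x_i$, pick $w_i\in\Crm^\infty(Q;E)\cap\ker\A$ with $\int_Q w_i=0$ attaining $f_{*\A}(x_i,\mu^{ac}(x_i))$ up to $\delta$, and set $\mu_j:=\mu+\big(\sum_i\mathbf{1}_{Q_i}\,w_i(\cdot/\eps_j)\big)\Leb^d+R_j$. Each rescaled periodic field $w_i(\cdot/\eps_j)$ is $\A$-free on $\R^d$ (as $\A$ is $k$-homogeneous), and $R_j$ is a potential corrector — built in the spirit of the blow-up/projection techniques of \cite{fonseca1992quasi-convex-in} — that repairs the patching defect across the $\partial Q_i$ and satisfies $R_j\to 0$ in $\Lrm^1$, so $\mu_j\toweakstar\mu$. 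Equidistribution of $x/\eps_j$ in $Z$ and continuity of $f$ then give
\begin{multline*}
I^{\eps_j}(\mu_j)\ \longrightarrow\ \sum_i|Q_i|\,\dashint_Q f\big(x_i,y,\mu^{ac}(x_i)+w_i(y)\big)\dd y+(\text{singular terms})\\
\le\ I^{\mathrm{hom}}(\mu)+C\delta\,,
\end{multline*}
the singular terms being handled by the same scheme applied to the cell problem for $f^\infty$, concentrating mass along wave-cone directions; letting the mesh size and $\delta$ tend to zero and diagonalizing finishes the proof.

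\emph{Main obstacle.} The technical heart is twofold. On the lower-bound side it is the two relaxation identities above — that homogenization commutes with mollification and with the recession operation, so that the cell value~\eqref{eq:defhom2}, defined only through smooth $\A$-free fields, already accounts for oscillation \emph{and} concentration; convexity is used here essentially, via Jensen and Reshetnyak continuity. On the upper-bound side it is the construction of a \emph{globally} $\A$-free recovery sequence: unlike the unconstrained case one cannot simply cut off and paste cell correctors, because $\A$ also differentiates the cutoff and produces contributions blowing up in $\eps_j$, so the defect must be repaired through a potential/projection onto $\ker\A$ and controlled uniformly, while the singular part requires a careful construction so that the frozen-coefficient cell problem for $f^\infty$ is attained in directions of $\Lambda_\A$. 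Keeping the homogenization scale $\eps_j$ compatible with the localization scale of the blow-up in the presence of concentrations is precisely the difficulty that the two-scale* Young measure framework is designed to resolve.
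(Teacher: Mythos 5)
Your lower-bound argument is essentially the paper's proof: extract a subsequence generating a two-scale* Young measure $\bm{\nu}\in\Y^2_\A(\Omega;E)$ (Theorem~\ref{thm:representation}), represent the limit energy through $\ddprb{f,\bm{\nu}}$, apply Jensen in the $z$-variable to pass to the second-scale barycenter (this is exactly Proposition~\ref{lem:J2r} and its singular analogue), then use the $\A_\xi$-freeness of $\bbpr{\bm{\nu}}_x$ (Proposition~\ref{prop:secondbarycenterisafree}/Corollary~\ref{cor:Afree}) together with mollification on the torus, area-convergence and Reshetnyak-type continuity to compare with the cell formula (this is Proposition~\ref{lem:J1}), and finally invoke $(f^\infty)_{*\Acal}\ge(f_{*\Acal})^\infty$ (Proposition~\ref{prop:exinftyhom}). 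One small overreach: you assert the \emph{identity} $(f_{*\Acal})^\infty=(f^\infty)_{*\Acal}$ as a ``$1$-homogeneous rescaling''; the paper proves, and the argument needs, only the inequality $(f^\infty)_{*\Acal}\ge(f_{*\Acal})^\infty$, and the reverse inequality is not justified by rescaling.

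Where you diverge is the upper bound. Be aware that the paper's own proof of Theorem~\ref{thm:hom} consists solely of the $\liminf$ inequality; no recovery sequence is constructed anywhere in the paper. Your attempted construction, as written, has genuine gaps: the corrector $R_j$ that is supposed to restore $\A$-freeness after patching the rescaled cell minimizers $w_i(\cdot/\eps_j)$ across the cubes $Q_i$ is merely postulated --- for a general homogeneous operator $\A$ (no constant-rank or potential hypothesis is assumed in the theorem) there is no projection onto $\ker\A$ with the required $\Lrm^1$ control, and the commutator terms produced by the cutoffs carry factors $\eps_j^{-m}$ that must be beaten, which is precisely the hard point; the recovery of the singular part ``along wave-cone directions'' and the density-in-energy/diagonalization step (area-strict continuity of $I^{\mathrm{hom}}$, admissibility of the laminate class inside $\ker\A$) are likewise asserted rather than proved. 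So your $\liminf$ half faithfully reproduces the paper's argument, while your $\limsup$ half is not established --- though, to be fair, it is also not established in the paper, whose proof delivers only the lower bound despite the phrasing of the statement.
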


	\section{Preliminaries and Notation}\label{sec:preliminaries}
	
	Here and in what follows $\Omega \subset \R^d$ is an open and bounded set with $\mathscr L^d(\partial \Omega) = 0$ (where $\mathscr L^d$ denotes the $d$-dimensional Lebesgue measure). 
	To avoid cumbersome definitions we shall simply write $Z$ to denote the $d$-dimensional torus $\Tbb^d$, and $Q$ to denote the closed $d$-dimensional unit cube $[0,1]^d$. To distinguish the $d$-dimensional Lebesgue measure between two locally $d$-dimensional euclidean spaces $E$ and $F$ we will often write
	$\Leb^d_E$ and $\Leb^d_F$ respectively. We denote the indicator function of a set $A$ by $\chi_A$. If $E,F$ are two Banach spaces, we denote by $\textrm{Lin}(E;F)$ the space of linear maps from $E$ to $F$.

	\subsection{Geometric measure theory} 
	Let $X$ be a locally convex space. We denote by $\Crm_c(X)$ the space of compactly supported and continuous functions on $X$, and by $\Crm_0(X)$ we denote its completion with respect to the $\|\frarg\|_\infty$ norm. 
	The space $\Crm_c(X)$ is not a complete normed space in the usual sense, however, it is a complete metric space as the inductive union of Banach spaces $\Crm_0(K_m)$ where $K_m \subset X$ are compact and $K_m \nearrow X$. 
	By the Riesz representation theorem, the space $\M(X)$ of bounded signed Radon measures on $X$ is the dual of $\Crm_0(X)$; a local argument of the same theorem states that the space $\M_\loc(X)$ of signed Radon measures on $X$ is the dual of $\Crm_c(X)$.
	We notate by $\M_\loc^+(X)$ the subset of non-negative measures. 
	Since $\Crm_0(X)$ is a Banach space, the Banach--Alaoglu theorem and its characterizations hold and in particular bounded sets of $\M(X)$ are weak-$\ast$ metrizable. 
	On the other hand, the local compactness of $\Crm_c(X)$ permits the existence of a complete and separable metric on $\M_\loc(X)$ with the property that convergence with respect to that metric is equivalent to the weak-$\ast$ convergence in $\M_\loc(X)$  (see Remark 14.15 in \cite{mattila1995geometry-of-set}). In a similar manner, for a finite dimensional euclidean space $E$, $\M(X;E)$ and $\M_\loc(X;E)$ will denote the spaces of {$E$-valued} bounded Radon measures and {$E$-valued} Radon measures respectively.
	
	The space $\M(X)$ is a normed space endowed with the \emph{total variation} norm
	\[
	|\mu|(X;E) \coloneqq \sup\setBB{\int_X \varphi \dd \mu}{\phi \in \Crm_0(X;E), \|\phi\|_\infty \le 1}.
	\]
	The set of all positive Radon measures on $X$ with total variation equal to one is denoted by 
	\[
	\mathrm{Prob}(X) \coloneqq \setB{\nu \in \M^+(X)}{\nu(X) = 1}; 
	\]
	the set of \emph{probability measures} on $X$.
	

	The \emph{push-forward} of a measure $\mu \in \M(\Omega;E)$, with respect to a Borel map $T : \Omega \to \Omega'$, is formally defined through the \emph{change of variables} formula \enquote{$T_\# \mu = \mu \circ T^{-1}$} as follows. For every $\phi \in \Crm(\Omega';E)$, we define the measure $T_\#\mu \in \M(\Omega;E)$ via
	\[
	\int_{\Omega'} \phi \dd(T_\#\mu) \coloneqq \int_\Omega \phi \circ T \dd \mu.
	\] 
	
	We define the linear action of a measure $\mu \in \M(X)$ on a function {$\phi \in \Crm_0(X)$}
	by the paring $\langle \phi,\mu \rangle =  \int_X \phi \dd \mu$.
	If $X = Z$ is the $d$-dimensional torus (which is a compact manifold), then any map {$g\in \Crm_c(Z) =  \Crm_0(Z)$}
	can be represented by a periodic and continuous function on the $d$-dimensional semi-closed unit cube; in this case we write  
	\[
	\dprb{ g,\mu} = \int_{Z}g \dd \mu \coloneqq \int_{[0,1[^d} g \dd \mu \quad \text{for $\mu \in \M(Z)$}.
	\]
	
	For a positive measure $\lambda \in \M(X)$ we write $\Lrm_\lambda(X;F)$ to denote the set (space provided that $F$ is a space) of $\lambda$-measurable functions with values on $F \subset E$. For $p \in [1,\infty]$ we write 
	\[
	\Lrm^p_\lambda(X;F) \coloneqq \setBB{f \in \Lrm(X;F)}{\int_X |f|_E^p \dd \lambda < \infty},
	\]
	the space of $\lambda$-measurable functions with values on $F \subset E$ that are $p$-integrable. We will also use the short-hand notations $\Lrm^p(X) \coloneqq \Lrm^p(X;\R)$ and 
	\[
	\Lrm_{\lambda,\loc}^p(X;F) = \setBB{f \in \Lrm(X;F)}{\text{$\int_K |f|_E^p \dd \lambda < \infty$ for all $K \Subset X$}}.
	\] 
	If $F$ is an euclidean space, Riesz' representation theorem tells us that every vector-valued measure $\mu \in \M_\loc(X;F)$ can be written as 
	\[
	\mu \; = \; f|\mu| \quad \text{for some $f \in \Lrm^\infty_{|\mu|,\loc}(\Omega;\Sbb_F)$}; 
	\]
	this decomposition is often referred as the \emph{polar decomposition} of $\mu$. The set of points $x \in X$ where 
	\[
	 \lim_{r \todown 0} \int_{B_r(x_0)} |f(x) - f(x_0)| \dd |\mu|(x) = 0,
	\]
	is called the set of $|\mu|$-Lebesgue points; this set has full $|\mu|$-measure. Another resourceful representation of a measure is given by its \emph{Radon--Nykod\'ym--Lebesgue decomposition}
	\[
	\mu \; = \; \frac{\dd \mu}{\dd \Leb^d} \, \Leb^d \; + \; \frac{\dd \mu}{\dd |\mu^s|} |\mu^s| \; = \; \mu^{ac} \Leb^d \; + \; \mu_s |\mu^s|
	\] 
	where as usual $\mu^{ac} \in \Lrm^1_\loc(X;F)$, $|\mu^s| \perp \Leb^d$, and $\mu_s \in \Lrm_{|\mu^s|,\loc}(X;\Sbb_F)$. 
	
	Let $E,F$ be open or closed subsets of an euclidean finite dimensional space and let $\mu$ be a non-negative Radon measure on $E$. 
	A map $\nu : E \mapsto \M(F) : x \mapsto \nu_x$ is said to be \emph{weak-$*$ $\mu$-measurable} if the map $x \mapsto \nu_x(B)$ is $\mu$-measurable for all Borel sets $B \in \Bfrak(F)$. 
	A simple method to check the $\mu$-measurability of such a measure valued map $x \mapsto \nu_x$ is to test the $\mu$-measurability of the map $x \mapsto \int_F g(x,z) \dd \nu_x(z)$ for every $\Bfrak(E) \times \Bfrak(F)$-measurable function $g : E\times F \to \R$. 
	The set of all weak-$*$ $\mu$-measurable maps $x \mapsto \nu_x$ endowed with the norm $|\nu|_\infty(E) \coloneqq \esssup_{(E,\mu)} |\nu_x|(F)$ conforms a Banach space which will shall denote by $\Lrm^\infty_{\mu,\star}(E;\M(F))$. In the particular case that $\mu = \mathscr L^d$ we shall simply write $\Lrm^\infty_{\star}(E;\M(F))$.
	
	Given a weak-$*$ $\mu$-measurable map $\nu \in \Lrm^\infty_{\mu,\star}(E;\M(F))$ we can define the {generalized product} of $\mu$ and $\nu_x$ which is the measure taking the values 
	\[
	(\mu \otimes \nu_x )(U) \coloneqq \int_E \int_F \chi_U(x,z) \dd \nu_x(z) \dd \mu(x), \qquad \text{for all $U \in \Bfrak(E \times F)$}.
	\]
	The main step in the construction of Young measures (and multi-scale Young measures) relies on a well-known disintegration result for measures $\lambda \in \M^+(E \times F)$.
	Merely, it establishes a condition under which $\lambda = \mu \otimes \nu_x$, where $\mu$ is the push-forward of $\lambda$ under the projection onto $E$ and $\nu_x$ is a weak-$*$ $\mu$-measurable map of probability measures.
	This is recorded in the next theorem (for a proof see Theorem 2.28 in \cite{ambrosio2000functions-of-bo}).

	\begin{theorem}[disintegration]\label{thm:desintegration} 
		Let $\lambda \in \M^+(E \times F)$ and let $\pi : E \times F \to E$ be the projection on the first factor. 
		Assume that the push-forward measure $\mu \coloneqq \pi_\#\lambda \in \M(E)$ is a finite Radon measure. 
		Then there exists a weak-$*$ $\mu$-measurable map $\nu \in \Lrm^\infty_{\mu,\star}(E;\M(F))$, uniquely defined up to equivalence classes, such that $\lambda = \mu \otimes \nu_x$. 
		Moreover,  
		\[
		\nu_x\in \mathrm{Prob}(F) \quad \text{for $\mu$-almost every~$x$ in $E$}.
		\]
	\end{theorem}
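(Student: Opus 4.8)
The plan is to disintegrate $\lambda$ by combining a Radon--Nikod\'ym argument over a countable family of continuous test functions on $F$ with a pointwise application of the Riesz representation theorem. First I would reduce to working against $\Crm_0(F)$: since $F$ is a (relatively open or closed) subset of a finite-dimensional euclidean space, $\Crm_0(F)$ is separable, so I fix a countable $\Q$-linear subspace $D \subset \Crm_0(F)$ that is dense and contains a sequence $0 \le \phi_n \nearrow 1$ (pointwise) with $\phi_n \in \Crm_c(F)$. For each $\phi \in \Crm_0(F)$ I define a signed measure $\lambda_\phi$ on $E$ by $\lambda_\phi(A) \coloneqq \int_{A \times F} \phi(y)\dd\lambda(x,y)$; from $|\lambda_\phi(A)| \le \|\phi\|_\infty\,\mu(A)$ one gets $\lambda_\phi \ll \mu$, hence a density $T_\phi \coloneqq \di\lambda_\phi/\di\mu \in \Lrm^\infty_\mu(E)$ with $\|T_\phi\|_\infty \le \|\phi\|_\infty$, with $T_\phi \ge 0$ $\mu$-a.e.\ whenever $\phi \ge 0$, and with $\phi \mapsto T_\phi$ linear modulo $\mu$-null sets.

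Next I would assemble the slices. Since $D$ is countable, I can discard a single $\mu$-null set $N \subset E$ outside of which, for every $x$, the map $\phi \mapsto T_\phi(x)$ is simultaneously $\Q$-linear on $D$, bounded by $\|\phi\|_\infty$, and nonnegative on nonnegative elements of $D$ (intersect the countably many full-measure sets on which each relation holds). For $x \notin N$ this functional extends uniquely by density to a bounded positive linear functional of norm $\le 1$ on $\Crm_0(F)$, so the Riesz representation theorem yields $\nu_x \in \M^+(F)$ with $\nu_x(F) \le 1$ and $\int_F \phi \dd\nu_x = T_\phi(x)$ for all $\phi \in \Crm_0(F)$; I set $\nu_x \coloneqq 0$ on $N$.

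Then comes the bookkeeping. For $\phi \in \Crm_0(F)$ the map $x \mapsto \int_F \phi \dd\nu_x$ agrees $\mu$-a.e.\ with $T_\phi$ (approximate $\phi$ in sup-norm by elements of $D$, noting both $T$ and $x\mapsto\int\phi\dd\nu_x$ are $1$-Lipschitz in $\phi$ for $\|\frarg\|_\infty$), hence is $\mu$-measurable; the functional monotone class theorem --- the class of bounded Borel $h : F \to \R$ for which $x \mapsto \int_F h \dd\nu_x$ is $\mu$-measurable is a vector space stable under bounded monotone limits (since $\nu_x(F) \le 1$) and contains $\Crm_c(F)$ --- then shows $x \mapsto \nu_x(B)$ is $\mu$-measurable for every $B \in \Bfrak(F)$, i.e.\ $\nu \in \Lrm^\infty_{\mu,\star}(E;\M(F))$. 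For the identity $\lambda = \mu \otimes \nu_x$, I fix $A \in \Bfrak(E)$ and observe that the two finite measures $B \mapsto \int_A \nu_x(B) \dd\mu(x)$ and $B \mapsto \lambda(A \times B)$ on $F$ have the same integral against every $\phi \in \Crm_0(F)$, namely $\int_A T_\phi \dd\mu = \lambda_\phi(A)$; hence they coincide, so $(\mu \otimes \nu_x)(A \times B) = \lambda(A \times B)$ for all measurable rectangles, and since the rectangles form a $\pi$-system generating $\Bfrak(E \times F)$ on which both finite measures agree (including on $E \times F$ itself), $\mu \otimes \nu_x = \lambda$. Taking $A = E$, $\phi = \phi_n$ and letting $n \to \infty$ gives $\int_E \nu_x(F) \dd\mu = \lambda(E \times F) = \mu(E)$, which together with $\nu_x(F) \le 1$ $\mu$-a.e.\ forces $\nu_x \in \mathrm{Prob}(F)$ for $\mu$-a.e.\ $x$. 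Uniqueness modulo $\mu$-equivalence is immediate: $\mu \otimes \nu_x = \mu \otimes \nu'_x$ implies $\int_F \phi \dd\nu_x = \int_F \phi \dd\nu'_x$ $\mu$-a.e.\ for each $\phi$ in a countable dense subset of $\Crm_0(F)$, whence $\nu_x = \nu'_x$ for $\mu$-a.e.\ $x$.

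The main obstacle is the step that turns "$T_\phi$ defined up to a $\phi$-dependent null set" into "$(T_\phi(x))_\phi$ is, for a fixed full-measure set of $x$, the action of a genuine positive measure $\nu_x$". This is precisely where separability of $\Crm_0(F)$ is essential: it is what makes the countable-exhaustion argument possible, producing one exceptional null set valid for all test functions at once. The subsequent upgrade from measurability against continuous integrands to measurability against all Borel sets then relies on the monotone class step; everything else is routine measure theory.
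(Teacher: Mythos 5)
Your proposal is correct and follows the standard route; the paper itself does not prove this statement but refers to Theorem~2.28 in Ambrosio--Fusco--Pallara, whose proof is essentially the same scheme you use (Radon--Nikod\'ym densities $T_\phi$ against a countable dense family in $\Crm_0(F)$, one common null set, Riesz representation for the slices, then a monotone-class/$\pi$-system upgrade). The only detail worth making explicit is the positivity of the extended functionals: you should choose the countable $\Q$-linear set $D$ so that every nonnegative element of $\Crm_0(F)$ is a uniform limit of \emph{nonnegative} elements of $D$ (e.g.\ close $D$ under $g \mapsto |g|$), since positivity on nonnegative members of $D$ alone does not immediately pass to arbitrary nonnegative test functions through approximants that may dip below zero.
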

	
	We close this section by introducing the notion of \emph{probability tangent measure} as introduced in \cite[Sec. 2.7]{ambrosio2000functions-of-bo}. Let $\{r_j\}_{j \in \Nbb}$ be an infinitesimal sequence of positive real numbers ($r_j \searrow 0$). A \emph{local blow-up sequence} of a measure $\mu \in \M(\Omega;\R^N)$ at a point $x_0 \in \Omega$ is a sequence of (normalized) measures of the form 
	\[
	\tau_j = \frac{1}{|\mu|(\cl{Q_{r_j}(x_0)})^{-1}}\,{T^{x_0,r_j}_\# \mu} \in \M(\cl Q), 
	\]
	which are well-defined provided that $|\mu|(Q_{r_j}(x_0)) > 0$. A weak-$*$ limit $\tau \in \M(\cl{Q})$ of a local blow-up sequence is said to be a {local tangent measure}. We write
	\[
	\tau \in \Tan_1(\mu,x_0) \subset \mathrm{Prob}(\cl{Q}),
	\]
	to denote the set of all probability tangent measures (\emph{tangent measures} for short). \footnote{The term \enquote{tangent measure} refers to a more general object than the one referred to here. For the purposes of this paper, in particular the development of the localization principles in Section~\ref{sec:5}, it is technically more convenient to work with local tangent measures. However, our conclusions are compatible with the more general notion of \emph{tangent measure} introduced by Preiss \cite{preiss1987geometry-of-mea}.} At $|\mu|$-almost every $x_0 \in \Omega$, all tangent measures of $\mu$ at $x_0$ are constant multiples of a positive measure. More precisely, if $\mu =  f|\mu|$ is the polar decomposition of $\mu$, then at every $|\mu|$-Lebesgue point $x_0 \in \Omega$ it holds 
	\[
	\Tan_1(\mu,x_0) = f(x_0) \Tan_1(|\mu|,x_0).
	\]
	In particular, every tangent measure $\tau \in \Tan_1(\mu,x_0)$ can be written as
	\[
	\tau = \frac{\dd \mu}{\dd |\mu|}(x_0)|\tau|.
	\]

	\subsection{Integrands}\label{integrands}
	
	Let $n$ be a non-negative integer. Throughout this section and the rest of the paper we will consider continuous integrands $f : \Omega \times Z^{n-1} \times E \to \R$ (recall that $\Omega \subset \R^d$ is an open and bounded set, $Z = \Tbb^d$ is the $d$-dimensional torus, and $E$ is a euclidean space of finite dimension endowed with a norm $|\frarg|=|\frarg|_E$). Elements of $Z^{n-1}$ shall be denoted by $\bm{\xi} = (\xi_1,\dots,\xi_{n-1})$.
	
	Consider the transformation
	\[
	(Tf)(x,\bm{\xi},\hat z) \coloneqq (1 - |\hat z|)\cdot f\left(x,\bm{\xi},\frac{\hat z}{1 - |\hat z|}\right), \quad \text{$(x,\bm{\xi},\hat z) \in \Omega \times Z^{n-1} \times \Bbb_E$},
	\]
	defined for $f \in \Crm(\Omega \times Z \times \R^N)$ where $\Bbb_E$ is the open unit ball in $E$. The vector space
	\begin{align*}
	\E^n(\Omega;E) \coloneqq \setB{&f \in \Crm(\Omega \times Z^{n-1} \times E)}{\text{$(Tf)$ extends} \\& \qquad  \text{to a continuous function in $\Crm\big(\cl{\Omega \times Z^{n-1} \times \Bbb_E}\big)$}}
	\end{align*}
	endowed with the norm $\| f \|_{\E^n(\Omega;E)} \coloneqq \| Tf \|_{\infty}$, $\E^n(\Omega ;E)$ is a Banach space. Moreover, $T$ is a \emph{compactification} in the sense that
	\[
	T : \E^n(\Omega ;E) \to \Crm\big(\cl{\Omega \times Z^{n-1} \times \Bbb^E}\big)
	\]
	is an isometry of spaces with inverse 
	\[
	(Sg)(x,\bm{\xi},z) \coloneqq (1 + |z|)\cdot g\left(x,\bm{\xi},\frac{z}{1 + |z|}\right), \quad \text{$(x,\bm{\xi}, z) \in \Omega \times Z^{n-1} \times E$}. 
	\] 
	The map $T : \E^n(\Omega;E) \to \Crm\big(\cl{\Omega \times Z^{n-1} \times \Bbb^E}\big)$ induces, through duality, an isomorphism $T^*$ (with inverse $S^*$) of the dual spaces; $T^*$ and $S^*$ are also isometries.
	Hence, a subset $X \subset (\E^n(\Omega;E))^*$ is sequentially weak-$\ast$ closed if and only if its image under $S^*$,
	$S^*X \subset \M\big(\cl{\Omega \times Z^{n-1} \times \Bbb^E}\big)$, is sequentially weak-$\ast$ closed in the sense of measures.

	Every $f \in \E^n(\Omega;E)$ has linear-growth at infinity, meaning there exists a positive constant $M$ (in this case given by $M = \|Tf\|_\infty$) for which $|f(x,\bm{\xi},z)| \le M(1 + |z|)$ for all $(x,\bm{\xi}, z) \in \Omega \times Z^{n-1} \times E$. This allows one to define a regularization at infinity: for $f\in \E^n(\Omega;E)$, we define the \emph{strong recession function} of $f$ as the limit
	\[
	f^\infty(x,\bm{\xi},z) \coloneqq \lim_{\substack{x' \to x\\ \bm{\xi}' \to \bm{\xi}\\t \to \infty}}\frac{f(x',\bm{\xi}',tz)}{t},
	\qquad \text{for $(x,\bm{\xi}, z ) \in \cl\Omega \times Z^{n-1}\times E$.}
	\]
	The continuity of $Tf$ ensures the limit is well-defined, 
	and, in fact, $Tf(x,\bm{\xi},\hat z) = f^\infty(x,\bm{\xi},\hat z)$ for all $(x,\bm{\xi},\hat z) \in \cl \Omega \times Z^{n-1} \times \partial\Bbb_E$. Observe that $f^\infty$ is positively one-homogeneous in the $z$-variable and hence it can be recovered from the extended values of $Tf$. 
	


	
	To complement our notation, we also define  
	\[
	\E^0(E) \coloneqq  \setB{h \in \Crm(E)}{\text{$h^\infty$ exists}}
	\]
	{where $h^\infty$ is the strong recession function above without $x$ or $\xi$ dependance.}
	Observe that if $h \in \E^0(E)$, then the integrand $(\phi \otimes g \otimes h)(x,\bm{\xi},z) \coloneqq \phi(x)g(\bm{\xi})h(z)$
	belongs to the space $\E^n(\Omega;E)$ whenever $\phi \in \Crm(\cl \Omega)$ and $g \in \Crm(Z^{n-1})$.
	
	A function $g : Z^{n-1} \to \R$ is called \emph{upper (lower) semicontinuous} if its $Q$-periodic extension to $(\R^d)^{n-1}$ is  upper (lower) semicontinuous. 
	Under this convention, the function $\xi \mapsto \xi$ with $\xi \in (0,1]$ is \textit{not} lower semicontinuous on $Z$ while the function $\xi \mapsto \xi$ for $\xi \in [0,1)$ \textit{is}.
	\begin{figure}[h]
		\captionsetup{justification=raggedright}  
		\centering
		\begin{tikzpicture}
		\draw[->] (0,-0.2)--(0,1.2);
		\draw(-1,0)--(2,0); 
		\draw(-1,0)--(0,1);
		\draw(0,0) -- (1,1);
		\draw(1,0) -- (2,1);
		\draw (0,0) circle (1.5pt);
		\draw (-1,0) circle (1.5pt);
		\draw (1,0) circle (1.5pt);
		\filldraw(0,1) circle (1.5pt);
		\filldraw(1,1) circle (1.5pt);
		\filldraw(2,1) circle (1.5pt);
		\end{tikzpicture}
		\hspace{2cm}
		\begin{tikzpicture}
		\draw[->] (0,-0.2)--(0,1.2);
		\draw(-1,0)--(2,0); 
		\draw(-1,0)--(0,1);
		\draw(0,0) -- (1,1);
		\draw(1,0) -- (2,1);
		\draw (0,1) circle (1.5pt);
		\draw (1,1) circle (1.5pt);
		\draw (2,1) circle (1.5pt);
		\filldraw(-1,0) circle (1.5pt);
		\filldraw(0,0) circle (1.5pt);
		\filldraw(1,0) circle (1.5pt);
		\end{tikzpicture}
		
		\caption{Lower semicontinuity of periodic functions, the functions $\xi \mapsto \xi$ for $\xi \in (0,1]$ and $\xi \in [0,1)$. } \label{fig:Lscofperiodic}
	\end{figure}
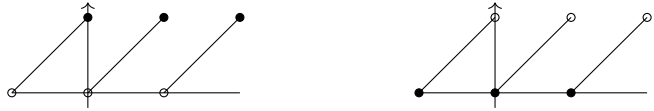
	In the following we say that a {function} $f:\Omega \times Z^{n-1} \times E \to \R$ is upper (lower) semicontinuous if $f(\frarg,\xi,\frarg)$ is upper (lower) semicontinuous on $\Omega \times E$ for all $\bm{\xi} \in Z^{n-1}$ and $f(x,\frarg,z)$ is upper (lower) semicontinuous on $Z^{n-1}$.
	
	In general, the strong recession function of a Borel (or even continuous) integrand  might fail to exist. Instead, one can always define the \emph{upper} and \emph{lower recession} functions by setting
	\[
	f^\#(x,\bm{\xi},z) \coloneqq \limsup_{\substack{x' \to x\\ \bm{\xi}' \to \bm{\xi}\\t \to \infty}}\frac{f(x',\bm{\xi}',tz)}{t},
	\qquad \text{for $(x,\bm{\xi}, z ) \in \cl\Omega \times Z^{n-1}\times E$}
	\]
	and
	\[
	f_\#(x,\bm{\xi},z) \coloneqq \liminf_{\substack{x' \to x\\ \bm{\xi}' \to \bm{\xi}\\t \to \infty}}\frac{f(x',\bm{\xi}',tz)}{t},
	\qquad \text{for $(x,\bm{\xi}, z ) \in \cl\Omega \times Z^{n-1}\times E$}.
	\]
	In the same way as $f^\infty$, $f^\#$ and $f_\#$ are positively $1$-homogeneous on the $z$-variable. Observe that $f^\#$ (respectively $f_\#$) is nothing else than the upper (lower) semicontinuous regularization of $Tf$ on $\cl{\Omega \times Z^{n-1} \times \Bbb_E}$. This observation is the key argument behind the following approximation result 
	(its proof follows the same arguments given in the proof \cite[Lemma~2.3]{alibert1997non-uniform-int} in the context of classical generalized Young measures).

	\begin{proposition}\label{prop:aprox}
		Let $f : \Omega \times Z^{n-1} \times E \to \R$ be an upper semicontinuous integrand and assume there exists $M>0$ such that $f(x,\xi,z) \geq -M (1+|z|)$ for all $(x,\xi,z) \in \Omega\times Z\times E$.
		Then, there exists a non-increasing sequence of functions $(f_m)_{m \in \Nbb} \subset \E^n(\Omega;E)$ such that
		\begin{align*}
		\inf_{m > 0} f_m = \lim_{m \to \infty} f_m & = f \qquad  \textrm{pointwise on $\;\Omega \times Z^{n-1} \times E$, and} \\
		\inf_{m > 0} f_m^\infty = \lim_{m \to \infty} f_m^\infty & = f^\#  \qquad \textrm{pointwise on $\;\cl \Omega \times Z^{n-1} \times E$}.
		\end{align*}
	\end{proposition}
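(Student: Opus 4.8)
The plan is to transfer the statement to the compactification and then invoke the classical approximation of an upper semicontinuous function by a decreasing sequence of Lipschitz functions. Put $K := \cl{\Omega \times Z^{n-1} \times \Bbb_E}$, a compact metric space, fix a metric $d$ inducing its topology, and recall from Section~\ref{integrands} that $T$ is an isometric isomorphism of $\E^n(\Omega;E)$ onto $\Crm(K)$, with inverse $S$; both are order preserving, the weights $1 - |\hat z|$ on $\Bbb_E$ and $1 + |z|$ on $E$ being strictly positive. It therefore suffices to build a non-increasing sequence $(g_m)_{m \in \N} \subset \Crm(K)$ with
\[
\inf_m g_m = \lim_{m \to \infty} g_m = \cl{Tf} \qquad \text{pointwise on } K,
\]
where $\cl{Tf}$ is the upper semicontinuous envelope on $K$ of the function $Tf$ --- defined, and upper semicontinuous, a priori only on the open set $D := \Omega \times Z^{n-1} \times \Bbb_E$ (there it is the product of the nonnegative continuous weight $1 - |\hat z|$ with $f$ composed with a continuous change of variables). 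Setting $f_m := S g_m$, both conclusions then follow by transporting through $T$.

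For the construction I would take the Moreau--Yosida sup-convolutions
\[
g_m(w) \coloneqq \sup \; \setBB{Tf(w') - m\, d(w,w')}{w' \in D}, \qquad w \in K.
\]
Using $Tf \geq -M$ on $D$ --- the only place the hypothesis $f \geq -M(1+|z|)$ is needed --- and the boundedness of $Tf$ from above on $D$, each $g_m$ takes values in $[-M,\,\sup_D Tf]$; each is $m$-Lipschitz, hence lies in $\Crm(K)$; the sequence is non-increasing in $m$ and dominates $\cl{Tf}$; and a routine argument using upper semicontinuity gives $\inf_m g_m(w) = \cl{Tf}(w)$ at every $w \in K$. The next step is to identify the envelope: at points with $x \in \Omega$ and $|\hat z| < 1$ one has $\cl{Tf} = Tf$ (upper semicontinuity of $Tf$ there), while on the spherical part $\cl\Omega \times Z^{n-1} \times \partial\Bbb_E$ the envelope $\cl{Tf}$ agrees with $f^\#$ restricted to unit vectors --- this is exactly the observation recalled in Section~\ref{integrands} that $f^\#$ is the upper semicontinuous regularization of $Tf$; by $1$-homogeneity of $f^\#$ this fixes $f^\#$ on all of $\cl\Omega \times Z^{n-1} \times E$.

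It then remains to read off the two convergences from those of $(g_m)$. Since $S$ is order preserving, $(f_m)$ is non-increasing, whence $\inf = \lim$ in both assertions. For $(x,\bm{\xi},z) \in \Omega \times Z^{n-1} \times E$, writing $\hat z := z/(1+|z|) \in \Bbb_E$,
\[
f_m(x,\bm{\xi},z) = (1+|z|)\, g_m(x,\bm{\xi},\hat z) \;\searrow\; (1+|z|)\, \cl{Tf}(x,\bm{\xi},\hat z) = (1+|z|)\, Tf(x,\bm{\xi},\hat z) = f(x,\bm{\xi},z),
\]
which is the first claim. For the second, $f_m^\infty$ is positively $1$-homogeneous in $z$ and, by the identity recalled in Section~\ref{integrands}, coincides with $T f_m = T S g_m = g_m$ on $\partial\Bbb_E$; hence for $z \neq 0$
\[
f_m^\infty(x,\bm{\xi},z) = |z|\, g_m\big(x,\bm{\xi},z/|z|\big) \;\searrow\; |z|\, \cl{Tf}\big(x,\bm{\xi},z/|z|\big) = |z|\, f^\#\big(x,\bm{\xi},z/|z|\big) = f^\#(x,\bm{\xi},z),
\]
by $1$-homogeneity of $f^\#$; the case $z = 0$ is trivial.

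There is no deep obstacle: set up on $K$, the argument reduces to the classical fact that on a compact metric space a bounded upper semicontinuous function is the pointwise-decreasing limit of its Lipschitz sup-convolutions, together with bookkeeping through the isometry $T$ --- precisely the scheme of \cite[Lemma~2.3]{alibert1997non-uniform-int}. The step genuinely requiring care is the boundedness of $Tf$ from above on $D$; this forces in particular that $f^\#$ be finite --- which is itself necessary for the asserted convergence $f_m^\infty \searrow f^\#$, each $f_m^\infty$ being finite --- and it is where a linear growth control on $f$ at infinity must be used, in keeping with the requirement $f_m \in \E^n(\Omega;E)$. A minor additional point is that if the hypothesis is read in the separate sense of Section~\ref{integrands}, the sup-convolution should be carried out in the $(x,z)$-variables and in the $\bm{\xi}$-variable successively (each legitimate under that hypothesis), which lengthens but does not complicate the argument.
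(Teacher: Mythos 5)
Your proposal is correct and takes essentially the same route as the paper, whose proof is precisely a pointer to the argument of Alibert--Bouchitt\'e built on the observation you invoke --- that $f^\#$ is the upper semicontinuous regularization of $Tf$ on $\cl{\Omega \times Z^{n-1} \times \Bbb_E}$ --- your Moreau--Yosida sup-convolution on the compactification merely making that classical scheme explicit. Your remark that an upper linear-growth bound on $f$ (not literally contained in the stated hypothesis, which only bounds $f$ from below) is needed so that $Tf$ is bounded above, the sup-convolutions are finite, and $f_m \in \E^n(\Omega;E)$, is well taken and applies equally to the argument the paper refers to.
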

	\begin{remark} The analogous statement holds for lower semicontinuous integrands $f$ and their lower recession function $f_\#$ {for a non-decreasing sequence of approximating functions.}
	\end{remark}
	
	\section{Generalized multi-scale Young measures}\label{sec:3}
	
	Our construction extends the exposition in \cite{kristensen2010characterizatio} which itself goes back to the seminal works of DiPerna \& Majda \cite{diperna1987oscillations-an} and Alibert \& Bouchitt\'e \cite{alibert1997non-uniform-int}.
	
	Heuristically, $n$ will represent the number of hierarchical scales of the vector
	\[
	\underbrace{(x,\xi_1,\dots,\xi_{n-1})}_{n}
	\]
	where $x \in \Omega$ is the \emph{macroscopic} variable and the $\xi_i$'s conform a hierarchical family of periodic \emph{microscopic} scales.
	The indexing corresponding to the microscopic scales reflects a disassociation between the $i$-th scale and the finer $(i + 1)$-th scale. Mathematically, this is reflected by the homogenization with respect to scales
	\[
	\bigg(x,\underbrace{\frac{x}{\eps_1}}_{\xi_1},\dots,\underbrace{\frac{x}{\eps_{n-1}}}_{\xi_{n-1}}\bigg) 
	\]
	where we assume that $\eps_{i +1} \ll \eps_i$ for each $i = \{1,\dots,n-2\}$.
	We are now ready to introduce the notion of multi-scale Young measure.
	
	
	We are now ready to give the precise definition of multi-scale* Young measure and state their main properties. 
	
	\begin{definition}[$n$-scale* Young measure]
		A four-tuple $ \bm{\nu} =(\nu, \lambda, \rho, \nu^\infty)$ 
		is called a \textit{generalized $n$-scale* Young measure} on $\cl \Omega$ with values on $E$ provided that  
		\begin{enumerate}
			\item[\textnormal{(i)}] $\lambda$ is a positive measure on $\cl \Omega$,
			\item[\textnormal{(ii)}] $\rho$ is a weak-$*$ $\lambda$-measurable map from $\cl \Omega$ into the set $\mathrm{Prob}(Z^{n-1})$ of probability measures over the product of $(n-1)$ copies of the $d$-dimensional torus, and
			\item[\textnormal{(iii)}] the map $(x,{\xi}) \mapsto \dpr{|\frarg|,\nu_{x,{\xi}}}$ belongs to  $\Lrm^1(\Omega \times Z^{n-1})$.
		\end{enumerate}
		Additionally, $\nu$ and $\nu^\infty$ are weak- and weak-$*$ measurable maps on $\cl \Omega \times Z$ respectively:
		\begin{enumerate}
			\item[\textnormal{(iv)}] $\nu$ is a weak measurable map from $\cl \Omega \times Z^{n-1}$ into the set $\mathrm{Prob}(E)$ of probability measures over $E$, and
			\item[\textnormal{(v)}] $\nu^\infty$ is a weak-$*$ $(\lambda \otimes \rho_x)$-measurable map from $\cl \Omega \times Z^{n-1}$ into the set $\mathrm{Prob}(\partial \Bbb_E)$ of probability measures supported on the unit sphere in $E$. 
		\end{enumerate}
		The set of all $n$-scale Young measures is denoted by $\Y^n(\Omega;E)$. 
	\end{definition}

	\subsection{Construction}

	We shall restrict our analysis to two-scale* Young measures  $\Y^2(\Omega;E)$. The results and ideas behind the proofs extend analogously to $n$-scale* Young measures.
	
	Two-scale* Young measures conform a set of dual objects to the space of integrands $\E^2(\Omega;E)$ in the following way:
	For $f \in \E^2(\Omega;E)$ and $\bm{\nu} \in \Y^2(\Omega,E)$, we define a bilinear product by setting
	\begin{equation}\label{eq:Yproduct}
	\begin{split}
	\ddprb{f, \bm{\nu}} & \coloneqq \int_{\Omega} \bigg(\int_{Z} \dprb{f(x,\xi,\frarg),\nu_{x,\xi}} \dd \xi \bigg) \dd x \\
	& \; \qquad 
	+ \int_{\cl\Omega}\bigg(\int_{Z} \dprb{f^\infty(x,\xi,\frarg),\nu_{x,\xi}^\infty}\dd \rho^\nu_x(\xi)\bigg) \dd \lambda^\nu (x).
	\end{split}
	\end{equation}
	By the definition of two-scale* Young measure and the  the linear-growth of the elements of $\E^2(\Omega;E)$ we can estimate its norm by\begin{align*}
	|\ddprb{f,\bm{\nu}}| & \le \|Tf\|_\infty\bigg(\int_{\Omega \times Z} \dprb{1 + |\frarg|,\nu_{x,\xi}} \dd \xi \dd x + \int_{\cl \Omega} \dd \lambda^\nu(x)\bigg) \\
	& =  \|Tf\|_\infty\big(\Leb^d(\Omega) + \|g^\nu\|_{\Lrm^1(\Omega\times Z)} + \lambda^\nu(\cl \Omega)\big), 
	\end{align*}
	where we used the short-hand notation $g^\nu(x,\xi) \coloneqq \dprb{|\frarg|,\nu_{x,\xi}}$.
	
	It follows that $\ddprb{\frarg,\bm{\nu}} \in \E^2(\Omega;E)^*$, and, in this sense, we shall identify $\Y^2(\Omega;E)$ with a subset of $\E^2(\Omega;E)^*$. For $\bm{\nu}_j,\bm{\nu} \in \Y^{{2}}(\Omega;E)$ we write $\bm{\nu}_j \toweakstar \bm{\nu}$ in $\E^2(\Omega;E)$ if
	\[
	\ddprb{f,\bm{\nu}_j} \to \ddprb{f,\bm{\nu}} \quad \text{for all $f \in \E^2(\Omega;E)$}.
	\] 
	In this case we say that $\bm{\nu}_j$ weak-$\ast$ converges to $\bm{\nu}$ as two-scale* Young measures.
	
	The following weak-$*$ semicontinuity results hold. 
	\begin{lemma}[semicontinuity] Let $f:\Omega \times Z \times E \to \R$ be a lower semicontinuous integrand with linear-growth at infinity. Then, the functional
		\begin{align}
		\bm{\nu} \mapsto \ddprb{f,\bm{\nu}}_\# \coloneqq \int_{\Omega} \bigg(& \int_Z \dprb{f(x,{\mathbf \xi},\frarg),\nu_{x,{\mathbf \xi}}} \dd \xi \bigg) \dd x \\
		&+\int_{\cl \Omega} \bigg(\int_Z \dprb{f_\#(x,\xi,\frarg),\nu^\infty_{x,\xi}} \dd \rho_x(\xi)\bigg) \dd \lambda(x)
		\end{align}
		is weak-$*$ lower semicontinuous in $\Y^2(\Omega;E)$. 
		
		Similarly, if $f:\Omega \times Z \times E \to \R$ is a Borel integrand that is upper semicontinuous and has linear growth at infinity, then the functional
		\begin{align}
		\bm{\nu} \mapsto \ddprb{f,\bm{\nu}}^\# \coloneqq \int_{\Omega} \bigg(& \int_Z \dprb{f(x,{\mathbf \xi},\frarg),\nu_{x,{\mathbf \xi}}} \dd \xi \bigg) \dd x  \\
		& + \int_{\cl \Omega} \bigg(\int_Z \dprb{f^\#(x,\xi,\frarg),\nu^\infty_{x,\xi}} \dd \rho_x(\xi)\bigg) \dd \lambda(x)
		\end{align}
		is weak-$*$ upper semicontinuous in $\Y^2(\Omega;E)$. 
	\end{lemma}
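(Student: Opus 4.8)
The plan is to reduce the statement to the elementary fact that a pointwise supremum (resp.\ infimum) of weak-$*$ continuous functionals is weak-$*$ lower (resp.\ upper) semicontinuous, using the monotone approximation of semicontinuous integrands by elements of $\E^2(\Omega;E)$. First, for the lower semicontinuous case, I would invoke Proposition~\ref{prop:aprox} in the form recorded in the Remark immediately following it: since $f$ is lower semicontinuous with linear growth at infinity (so in particular $f(x,\xi,z) \ge -M(1+|z|)$ for some $M>0$), there is a non-decreasing sequence $(f_m)_{m\in\Nbb} \subset \E^2(\Omega;E)$ with $f_m \nearrow f$ pointwise on $\Omega \times Z \times E$ and $f_m^\infty \nearrow f_\#$ pointwise on $\cl\Omega \times Z \times E$. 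The identity I want to establish is
\[
\ddprb{f,\bm\nu}_\# \;=\; \sup_{m\in\Nbb}\ddprb{f_m,\bm\nu} \;=\; \lim_{m\to\infty}\ddprb{f_m,\bm\nu} \qquad \text{for every } \bm\nu = (\nu,\lambda,\rho,\nu^\infty) \in \Y^2(\Omega;E).
\]

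This is where the (routine) bulk of the work sits: applying the monotone convergence theorem separately to the \emph{regular} term $\int_\Omega\int_Z \dprb{f_m(x,\xi,\frarg),\nu_{x,\xi}}\dd\xi\dd x$ appearing in \eqref{eq:Yproduct} and to the \emph{singular} term $\int_{\cl\Omega}\int_Z \dprb{f_m^\infty(x,\xi,\frarg),\nu^\infty_{x,\xi}}\dd\rho_x(\xi)\dd\lambda(x)$. For the first, the integrands are non-decreasing in $m$ and bounded below by the $\Lrm^1(\Omega\times Z)$ function $-\|Tf_1\|_\infty(1+g^\nu)$, where $g^\nu(x,\xi)=\dprb{|\frarg|,\nu_{x,\xi}}$, using that $\nu_{x,\xi}$ is a probability measure. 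For the second, since $f_m^\infty$ is positively $1$-homogeneous in $z$ and $\nu^\infty_{x,\xi}$ is supported on $\partial\Bbb_E$, the integrands are non-decreasing in $m$ and bounded below by the constant $-\|Tf_1\|_\infty$, which is $\lambda\otimes\rho_x$-integrable because $\lambda(\cl\Omega)<\infty$ and each $\rho_x$ is a probability measure. Adding the two limits (each allowed to be $+\infty$, which is harmless for lower semicontinuity) yields the displayed identity. With it in hand, weak-$*$ lower semicontinuity follows at once: if $\bm\nu_j \toweakstar \bm\nu$ then, since each $f_m\in\E^2(\Omega;E)$, the definition of weak-$*$ convergence of two-scale* Young measures gives $\ddprb{f_m,\bm\nu_j}\to\ddprb{f_m,\bm\nu}$; hence $\liminf_j \ddprb{f,\bm\nu_j}_\# \ge \liminf_j \ddprb{f_m,\bm\nu_j} = \ddprb{f_m,\bm\nu}$ for every $m$, and taking the supremum over $m$ closes the argument.

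The upper semicontinuous case is handled symmetrically. If $f$ is Borel, upper semicontinuous, and of linear growth, then $f\ge -M(1+|z|)$, so Proposition~\ref{prop:aprox} applies directly and produces a non-increasing sequence $f_m \searrow f$, $f_m^\infty \searrow f^\#$ in $\E^2(\Omega;E)$; the same monotone-convergence computation (now with $f_1\in\E^2(\Omega;E)$ furnishing the required integrable \emph{upper} bounds) gives $\ddprb{f,\bm\nu}^\# = \inf_m \ddprb{f_m,\bm\nu}$, an infimum of weak-$*$ continuous functionals, hence weak-$*$ upper semicontinuous. The only genuine care-point in the whole argument is bookkeeping: verifying that the monotone convergence theorem is legitimately applicable in each of the two terms by exhibiting the uniform integrable one-sided bound coming from $f_1$, and making sure the recession functions match what is written in the statement, namely $f_m^\infty \nearrow f_\#$ (not $f^\infty$) in the lower semicontinuous case and $f_m^\infty \searrow f^\#$ in the upper one, which is exactly what the respective versions of Proposition~\ref{prop:aprox} deliver.
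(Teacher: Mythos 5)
Your proof is correct and follows essentially the same route as the paper: monotone approximation of $f$ by elements of $\E^2(\Omega;E)$ via Proposition~\ref{prop:aprox}, weak-$*$ continuity of the pairing $\bm\nu\mapsto\ddprb{f_m,\bm\nu}$, and the monotone convergence theorem to identify $\ddprb{f,\bm\nu}_\#$ (resp.\ $\ddprb{f,\bm\nu}^\#$) as the supremum (resp.\ infimum) over $m$. The only cosmetic difference is that the paper argues a single case and obtains the other by replacing $f$ with $-f$, whereas you spell out both cases and the integrable one-sided bounds for the monotone convergence step explicitly.
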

	\begin{proof} Since the two statements are equivalent modulo taking $(-f)$ in place of $f$, we shall only argue the case of upper-semicontinuity. Let $f : \Omega \times Z \times E \to \R$ be an upper-semicontinuous integrand and let $\bm{\nu}_j \toweakstar \bm{\nu}$ in $\Y^2(\Omega;E)$. Further let $(f_m)_{m\in\Nbb} \subset \E^2(\Omega;E)$ be the monotone approximating sequence provided by Proposition~\ref{prop:aprox}. Fix $m \in \Nbb$, then by continuity of the pairing and the properties of the approximating sequence we get
		\begin{align*}
		\limsup_{j \to \infty} \, \ddprb{f,\bm{\nu}_j}^\# & \le \lim_{j \to \infty} \,  \ddprb{f_m, \bm{\nu}_j}  \\
		& =  \ddprb{f_m,\bm{\nu}}.
		\end{align*}
		The conclusion then follows by letting $m \to \infty$ in the inequality above and arguing with the monotone convergence theorem.
	\end{proof}
	An immediate consequence of this result is that the weak-$*$ continuity of the map $\bm{\nu} \mapsto \ddprb{f,\bm{\nu}}$ can be extended from $\E^2(\Omega;E)$ to the  larger class 
	\[
	\Rbf^2(\Omega;E) \coloneqq \setB{f \in \Crm(\Omega \times Z \times E)}{f^\infty \equiv f^\# \equiv f_\#}\,
	\]
	of continuous integrands possessing a \emph{strong recession function}. In particular $z$-convex integrands with linear growth at infinity belong to this class:
	
	\begin{proposition} Let $f : \Omega \times Z \times E \to \R$ be a continuous integrand with linear growth at infinity. If $f(x,\xi,\frarg)$ is convex for every $(x,\xi) \in \Omega\times Z$, then $f \in \Rbf^2(\Omega;E)$.
	\end{proposition}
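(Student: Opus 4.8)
The plan is to show that the joint limit defining the recession exists, i.e. that $f^\#\equiv f_\#$ on $\cl\Omega\times Z\times E$ (their common value being, by definition, the strong recession $f^\infty$). Write $M>0$ for a linear‑growth constant, so $|f(x,\xi,z)|\le M(1+|z|)$. Since $f^\#\ge f_\#$ is automatic, the content is the reverse inequality, which I will obtain by sandwiching both functions between the \emph{frozen recession}
\[
g(x,\xi,z)\coloneqq\lim_{t\to\infty}\frac{f(x,\xi,tz)}{t}.
\]
That $g$ is well defined is the first use of convexity: for fixed $(x,\xi)$ the map $t\mapsto t^{-1}\bigl(f(x,\xi,tz)-f(x,\xi,0)\bigr)$ is non‑decreasing on $(0,\infty)$ and bounded above by the growth estimate, so
\[
g(x,\xi,z)=\sup_{t>0}\frac{f(x,\xi,tz)-f(x,\xi,0)}{t}=\lim_{t\to\infty}\frac{f(x,\xi,tz)}{t}
\]
is finite, convex and positively $1$‑homogeneous in $z$, with $|g(x,\xi,z)|\le M|z|$. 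A standard subdifferential estimate shows in addition that the \emph{uniform} growth bound forces $f(x,\xi,\frarg)$ to be $M$‑Lipschitz with $M$ independent of $(x,\xi)$, which lets one restrict all (semi)continuity questions below to the compact slice $\{|z|\le1\}$. Being a pointwise supremum of the jointly continuous functions $(x,\xi,z)\mapsto t^{-1}(f(x,\xi,tz)-f(x,\xi,0))$, the function $g$ is lower semicontinuous on $\cl\Omega\times Z\times E$; and the instance $t=1$ of the monotonicity gives the pointwise bound $f(x,\xi,z)\le f(x,\xi,0)+g(x,\xi,z)$.

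Next I record the two one‑sided comparisons. For $t\ge T>0$ monotonicity gives
\[
\frac{f(x',\xi',tz)}{t}\ \ge\ \frac{f(x',\xi',Tz)-f(x',\xi',0)}{T}+\frac{f(x',\xi',0)}{t};
\]
letting $(x',\xi',t)\to(x_0,\xi_0,\infty)$ and using the joint continuity of $f$, and then $T\to\infty$, yields $f_\#(x_0,\xi_0,z)\ge g(x_0,\xi_0,z)$. In the opposite direction, replacing $z$ by $tz$ in the bound $f(x',\xi',z')\le f(x',\xi',0)+g(x',\xi',z')$ and dividing by $t$ gives $t^{-1}f(x',\xi',tz)\le t^{-1}f(x',\xi',0)+g(x',\xi',z)$; taking $\limsup$ along an arbitrary sequence $(x',\xi',t)\to(x_0,\xi_0,\infty)$ and using $t^{-1}f(x',\xi',0)\to0$ gives $f^\#(x_0,\xi_0,z)\le\limsup_{(x',\xi')\to(x_0,\xi_0)}g(x',\xi',z)$.

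Combining these, the whole statement reduces to the \emph{upper semicontinuity of $g$} on $\cl\Omega\times Z\times E$: granting it, $g$ is (with the previous step) continuous, the last displayed bound becomes $f^\#\le g$, and hence $f^\#\le g\le f_\#\le f^\#$ everywhere, so $f^\infty=f^\#=f_\#=g$ and $f\in\Rbf^2(\Omega;E)$; the claimed weak-$*$ continuity of $\bm{\nu}\mapsto\ddprb{f,\bm{\nu}}$ then follows from the semicontinuity lemma applied from both sides. I expect the upper semicontinuity of $g$ to be the main obstacle, and the genuinely delicate point of the whole proof. The difficulty is that $g$ is produced as a \emph{pointwise} monotone limit, which controls the behaviour in $t$ only for each frozen pair $(x,\xi)$, whereas $f^\#$ and $f_\#$ take the limit $t\to\infty$ \emph{jointly} with $(x',\xi')\to(x_0,\xi_0)$; what is really needed is that the convergence $T^{-1}\bigl(f(x,\xi,Tz)-f(x,\xi,0)\bigr)\to g(x,\xi,z)$ as $T\to\infty$ be \emph{uniform} in $(x,\xi,z)$ over the compact set $\cl\Omega\times Z\times\{|z|\le1\}$. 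This is precisely where the three hypotheses must be spent together: convexity supplies the monotonicity, the uniform linear‑growth bound makes the approximating family equibounded and equi‑Lipschitz in $z$, and the compactness of $\cl\Omega\times Z$ is what then rules out a loss of uniformity — for instance by a Dini‑type argument once $g$ is known to be continuous, or directly via the equi‑Lipschitz control and a finite covering of $\cl\Omega\times Z$.
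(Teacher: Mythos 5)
Your sandwich reduction is sound as far as it goes: convexity gives the monotone difference quotients, so the frozen recession $g(x,\xi,z)=\lim_{t\to\infty}t^{-1}f(x,\xi,tz)$ exists and is $1$-homogeneous with $|g|\le M|z|$, and your two one-sided comparisons $f_\#\ge g$ and $f^\#(x_0,\xi_0,z)\le\limsup_{(x',\xi')\to(x_0,\xi_0)}g(x',\xi',z)$ are correct, so the proposition is indeed equivalent to the upper semicontinuity of $g$ in $(x,\xi)$. But that is exactly where your argument stops: you only announce that compactness of $\cl{\Omega}\times Z$ should force the required uniformity, and neither route you sketch can deliver it. A Dini-type argument presupposes that the limit $g$ is continuous, which is precisely the point in question, and the equi-Lipschitz bound you derive is a bound in $z$ only; it gives no equicontinuity in $(x,\xi)$, so a finite covering of $\cl{\Omega}\times Z$ buys nothing. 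As written, the key step of the proof is missing.

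Moreover the gap is not merely unfinished business: under the stated hypotheses the missing step is false, so no argument could have closed it. Fix $x_0\in\Omega$ and set $f(x,\xi,z):=\max\{0,\,|z|-|x-x_0|^{-1}\}$, with $f(x_0,\xi,z):=0$. This $f$ is continuous on $\Omega\times Z\times E$, convex in $z$, and satisfies $0\le f\le 1+|z|$; yet for $|z|=1$ one has $g(x,\xi,z)=1$ for $x\ne x_0$ but $g(x_0,\xi,z)=0$, and choosing $|x'-x_0|^{-1}=\sqrt{t}$, respectively $|x'-x_0|^{-1}=t^{2}$, gives $f^\#(x_0,\xi,z)=1$ while $f_\#(x_0,\xi,z)=0$. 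So $g$ need not be upper semicontinuous, and such an $f$ does not belong to $\Rbf^2(\Omega;E)$: convexity plus a uniform linear-growth bound controls the recession limit at frozen $(x,\xi)$, but not the joint limit in $(x',\xi',t)$, which requires extra uniformity in $(x,\xi)$ (for instance $(x,\xi)$-independence of $f$ in the large-$z$ regime, or uniform convergence of the difference quotients on $\cl{\Omega}\times Z\times\partial\Bbb_E$). For comparison, the paper offers no argument at this level: its proof is a one-line citation of Remark~2.4 in Alibert--Bouchitt\'e, whose convexity input is the frozen-variable limit you already established, not the joint-limit statement. So your diagnosis of where the difficulty sits is accurate, but the proposal does not prove the proposition, and in the stated generality that final step cannot be proved.
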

	\begin{proof}
		The proof follows from Remark~2.4 in \cite{alibert1997non-uniform-int}.
	\end{proof}
	
	If $\bm{\nu} \in \Y^2(\Omega;E)$, duality yields 
	\[
	\dprb{g,S^*\bm{\nu}}
	= \ddprb{Sg,\bm{\nu}}\quad \text{for all $g \in \Crm\big(\cl{\Omega \times Z \times \Bbb_E}\big)$}.
	\]
	In other words, the following  diagram 
	\begin{equation}\label{diagram}
	\begin{tikzcd}
	\Y^2(\Omega;E)   \arrow[r, hook] \arrow[d,"S^*"]
	& \E^2(\Omega;E)^* 
	\arrow[d,"S^*"] \\
	S^*(\Y^2(\Omega;E)) \arrow[r,hook]
	& \M(\cl{\Omega\times Z \times \Bbb_E}) 
	\end{tikzcd}
	\end{equation}
	is commutative. 
	Likewise, every $\mu = S^*\bm{\nu} \in S^*(\Y^2(\Omega;E))$ verifies the identity
	\[
	\dprb{Tf,\mu} = \ddprb{f,\bm{\nu}} \quad \text{for all $f \in \E^2(\Omega;E)$}.
	\]
	Using that $S^*$ is an isometry of Banach spaces, it can be deduced the isomorphism lowers to a weak-$*$ isomorphism  
	$S^*:(\E^2(\Omega;E)^*,\text{w-$*$}) \to (\M(\cl{\Omega \times Z \times \Bbb_E}),\text{w-$*$})$. 
	It is then straightforward  to check that 
	\[
	S^*\bm{\nu}_j \toweakstar \mu \quad \text{in $\M(\cl{\Omega \times Z \times \Bbb_E}) \quad \Leftrightarrow \quad \bm{\nu}_j \toweakstar T^*\mu$ \quad in $\E^2(\Omega;E)$}.
	\] 
	\begin{remark}[topological isomorphism]\label{rem:iso} Topological properties are preserved under isomorphisms, in particular
		\begin{gather*}
		S^*[\Y^2(\Omega;E)] \; \text{is sequentially weak-$\ast$ closed in $\M(\cl{\Omega \times Z \times \Bbb_E})$} \\
		\Leftrightarrow\\
		\Y^2(\Omega;E) \; \text{is sequentially weak-$\ast$ closed in $\E^2(\Omega;E)^*$}\,.\\
		\end{gather*}
	\end{remark}
	
	
	The following characterization will play a fundamental role in proving the weak-$*$ compactness of uniformly bounded sets of two-scale* Young measures. We follow the presentation given in Lemma 2 in \cite{kristensen2010characterizatio} for $\Y^1(\Omega;E)$.    
	\begin{lemma}\label{lem:representationformula}
		The set $S^*[\Y^2(\Omega;E)]$ consists precisely of all positive measures $\mu \in \mathcal{M}^+(\cl{\Omega \times Z \times \Bbb_E})$ satisfying following property: for all $\phi \in \Crm(\Omega)$ and all $g \in \Crm(Z)$ it holds that
		\begin{align}\label{eq:representation}
		\int_{\cl{\Omega \times Z\times \Bbb_E}} \, (\phi \otimes g)(x,\xi)\,(1-|\hat z|) \dd\mu (x,\xi,\hat z) = \int_\Omega \phi(x) \dd x \int_{Z}g(\xi) \dd \xi\,.
		\end{align}
		
	\end{lemma}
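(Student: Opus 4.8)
The plan is to prove the two inclusions separately, following the scheme of \cite[Lemma~2]{kristensen2010characterizatio} (stated there for one-scale generalized Young measures) with the extra bookkeeping needed for the torus variable $\xi$ and the slicing $\rho$. For ``$\subseteq$'', I would fix $\bm{\nu}\in\Y^2(\Omega;E)$, put $\mu:=S^*\bm{\nu}$, and use the identity $\dprb{g,S^*\bm{\nu}}=\ddprb{Sg,\bm{\nu}}$ recorded above. Positivity of $\mu$ is immediate: if $g\ge 0$ then $Sg(x,\xi,z)=(1+|z|)\,g(x,\xi,z/(1+|z|))\ge 0$, and $(Sg)^\infty$, being a pointwise limit of non-negative quotients, is also $\ge 0$; hence both integrands in~\eqref{eq:Yproduct} are non-negative and are paired against the non-negative measures $\nu_{x,\xi}$, $\nu^\infty_{x,\xi}$, $\rho_x$, $\lambda$, so $\mu\in\M^+(\cl{\Omega \times Z \times \Bbb_E})$. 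For~\eqref{eq:representation}, the key observation is that the test function $g_0(x,\xi,\hat z):=\phi(x)g(\xi)(1-|\hat z|)$ satisfies $Sg_0(x,\xi,z)=\phi(x)g(\xi)$ for all $z$, and the recession function of $(x,\xi,z)\mapsto\phi(x)g(\xi)$ vanishes identically; since $\nu_{x,\xi}\in\mathrm{Prob}(E)$, formula~\eqref{eq:Yproduct} then gives $\dprb{g_0,\mu}=\ddprb{Sg_0,\bm{\nu}}=\int_\Omega\phi(x)\dd x\int_Z g(\xi)\dd\xi$, which is precisely~\eqref{eq:representation}.

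The substantive direction is ``$\supseteq$''. Given a positive $\mu$ satisfying~\eqref{eq:representation}, I would split $\mu=\mu_{\mathrm o}+\mu_{\mathrm c}$, with $\mu_{\mathrm o}:=\mu\restrict\{|\hat z|<1\}$ recording oscillations and $\mu_{\mathrm c}:=\mu\restrict(\cl\Omega\times Z\times\partial\Bbb_E)$ recording concentrations, and then build the tuple $\bm{\nu}$ one scale at a time. For the oscillation part, observe that, since $1-|\hat z|$ vanishes on $\partial\Bbb_E$ and tensor products are dense in $\Crm(\cl{\Omega \times Z})$, relation~\eqref{eq:representation} says exactly that $\pi_\#[(1-|\hat z|)\mu]=\Leb^d\restrict\Omega\otimes\Leb^d\restrict Z=:\ell$, where $\pi$ denotes the projection onto $\cl\Omega\times Z$. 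Applying the disintegration theorem (Theorem~\ref{thm:desintegration}) to the finite positive measure $(1-|\hat z|)\mu$ yields a weak-$*$ $\ell$-measurable family $(x,\xi)\mapsto\eta_{x,\xi}\in\mathrm{Prob}(\Bbb_E)$ (the weight forces $\eta_{x,\xi}$ to charge only $\{|\hat z|<1\}$ for $\ell$-a.e.\ $(x,\xi)$) with $(1-|\hat z|)\mu=\ell\otimes\eta_{x,\xi}$; dividing by the weight $1-|\hat z|$, which is strictly positive on $\{|\hat z|<1\}$, recovers $\mu_{\mathrm o}=\ell\otimes\beta_{x,\xi}$ with $\beta_{x,\xi}:=(1-|\hat z|)^{-1}\eta_{x,\xi}$. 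I would then take $\nu_{x,\xi}\in\mathrm{Prob}(E)$ to be the push-forward of $\eta_{x,\xi}$ under the radial decompactification $\hat z\mapsto\hat z/(1-|\hat z|)$ (measurability being inherited from $\eta$); this is a probability measure, and $\dprb{|\frarg|,\nu_{x,\xi}}=\beta_{x,\xi}(\Bbb_E)-1$ lies in $\Lrm^1(\Omega\times Z)$ because $\int_{\Omega\times Z}\beta_{x,\xi}(\Bbb_E)\dd x\dd\xi=\mu_{\mathrm o}(\cl{\Omega \times Z \times \Bbb_E})<\infty$.

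For the concentration part, iterated disintegration of $\mu_{\mathrm c}$ (first over $\cl\Omega$, then over $Z$) produces $\lambda\in\M^+(\cl\Omega)$ (bounded, as $\mu_{\mathrm c}\le\mu$), $\rho_x\in\mathrm{Prob}(Z)$ and $\nu^\infty_{x,\xi}\in\mathrm{Prob}(\partial\Bbb_E)$ with $\mu_{\mathrm c}=(\lambda\otimes\rho_x)\otimes\nu^\infty_{x,\xi}$, all maps being extended arbitrarily on the relevant null sets; this exhibits a bona fide $\bm{\nu}=(\nu,\lambda,\rho,\nu^\infty)\in\Y^2(\Omega;E)$. Finally I would verify $S^*\bm{\nu}=\mu$ by matching, term by term in~\eqref{eq:Yproduct}, the two parts of $\ddprb{Sg,\bm{\nu}}$ against $\int g\dd\mu_{\mathrm o}$ and $\int g\dd\mu_{\mathrm c}$ for all $g\in\Crm(\cl{\Omega \times Z \times \Bbb_E})$: in the first term the change of variables $z=\hat z/(1-|\hat z|)$ turns $\dprb{Sg(x,\xi,\frarg),\nu_{x,\xi}}$ into $\dprb{g(x,\xi,\frarg),\beta_{x,\xi}}$, which integrates against $\ell$ to $\int g\dd\mu_{\mathrm o}$; in the second term one uses $(Sg)^\infty=T(Sg)=g$ on $\partial\Bbb_E$ to obtain $\int g\dd\mu_{\mathrm c}$.

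The step I expect to be the main obstacle is the reconstruction of the oscillation component: recognizing that it is $(1-|\hat z|)\mu$, rather than $\mu_{\mathrm o}$ itself, whose disintegration is pinned down by~\eqref{eq:representation}; dividing out the weight correctly to return to $\mu_{\mathrm o}$; and verifying the integrability condition~(iii) for the $\nu_{x,\xi}$. All three hinge on the finiteness of $\mu$ together with the exact cancellation between $S$ and the factor $1-|\hat z|$ that is built into the compactification $T$. The remaining parts are routine bookkeeping with Theorem~\ref{thm:desintegration} and the change-of-variables formula.
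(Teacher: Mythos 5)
Your proposal is correct, and the necessity half coincides with the paper's (same test integrand $\phi\otimes g\otimes(1-|\frarg|)$, whose image under $S$ is $\phi\otimes g\otimes\chi_E$ with vanishing recession part, plus positivity of $S$). The sufficiency half, which is the substantive part, takes a genuinely different decomposition. The paper disintegrates the \emph{whole} measure $\mu$ once, as $\hat\lambda\otimes\hat\rho_x\otimes\hat\nu_{x,\xi}$, and then extracts the tuple by renormalizing with the weight $u(x,\xi)=\dprb{1-|\frarg|,\hat\nu_{x,\xi}}$ and the sphere masses $m_x(\xi)=\hat\nu_{x,\xi}(\partial\Bbb_E)$; the identity \eqref{eq:representation} is used in the form $u\,(\hat\lambda\otimes\hat\rho_x)=\Leb^d_\Omega\otimes\Leb^d_Z$, which yields both the support statement \eqref{eq:tec1} and the normalization \eqref{eq:tec2}, and the final verification $S^*\bm{\nu}=\mu$ requires the four-term regrouping computation. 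You instead split $\mu$ geometrically into its open-ball and sphere parts, pin down the oscillation part by disintegrating the weighted measure $(1-|\hat z|)\mu$, whose base projection is forced by \eqref{eq:representation} (together with positivity and Stone--Weierstrass density of tensor products) to be exactly $\Leb^d_\Omega\otimes\Leb^d_Z$, and then divide out the strictly positive weight on $\{|\hat z|<1\}$; the concentration part is handled by a separate iterated disintegration of $\mu\restrict(\cl\Omega\times Z\times\partial\Bbb_E)$, which produces the same $(\lambda,\rho,\nu^\infty)$ as the paper's formulas (your $\lambda$ is the $\cl\Omega$-projection of the sphere part, i.e.\ $(\int_Z m_x\,\dd\hat\rho_x)\,\hat\lambda$). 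What your route buys is that the analogue of \eqref{eq:tec1} becomes a consequence rather than a separate step, the normalizing factors $u$ and $m_x$ never appear, and the identity $S^*\bm{\nu}=\mu$ splits cleanly into two one-line change-of-variables checks; what the paper's route buys is an explicit dictionary between $(\nu,\lambda,\rho,\nu^\infty)$ and a single disintegration of $\mu$, which is convenient later. The points you flagged as delicate (finiteness of $\beta_{x,\xi}(\Bbb_E)$ only $\Leb^d\otimes\Leb^d_Z$-a.e., the resulting $\Lrm^1$ bound $\dprb{|\frarg|,\nu_{x,\xi}}=\beta_{x,\xi}(\Bbb_E)-1$, and the measurability transfers) are indeed the right ones and are handled correctly; as in the paper, the maps are only defined up to null sets and should be extended arbitrarily there.
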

	\begin{proof}
		\proofstep{Necessity.} Fix $\mu = S^*\bm{\nu}$. 
		Applying $\mu$ on the function $Tf = \phi \otimes g \otimes (1 -|\frarg|)$ and using that $f = \phi \otimes g \otimes \chi_{E}$ (and hence $f^\infty \equiv 0$) we get
		\[
		\dprb{ Tf, \mu } = \dprb{ Tf, S^*\bm{\nu} } = \ddprb{ f, \bm{\nu} } =\int_\Omega \int_{Z} \phi(x) g(\xi) \dd \xi \dd x,
		\]
		which is (\ref{eq:representation}).
		The positivity of $\mu$ follows from  the positivity of $\nu$ and $\nu^\infty$ and the definition of $S$.
		
		\proofstep{Sufficiency.} For the reverse statement, fix a measure $\mu \in \mathcal{M}^+(\cl{\Omega \times Z \times \Bbb_E})$ satisfying~(\ref{eq:representation}). We want to find $\bm{\nu} \in \Y^2(\Omega;E)$ satisfying  $\mu = S^*\bm{\nu}$.  
		By disintegration (see Theorem~\ref{thm:desintegration}), we may decompose $\mu$ as a double semi-product $\mu = \hat \lambda \otimes \hat \rho_x \otimes \hat \nu_{x,\xi}$ 
		where $\hat\lambda \in \M^+(\cl\Omega)$, $\hat\rho \in \Lrm^\infty_{\hat\lambda,\star}(\cl\Omega;\mathrm{Prob}(Z))$ and $\hat \nu \in \Lrm^\infty_{\hat\lambda \otimes \hat\rho_x,\star}(\cl\Omega\times Z;\mathrm{Prob}( \cl{\Bbb_E}))$. 
		Let us further set 
		\[
		u(x,\xi) \coloneqq \dpr{1 - |\frarg|,\hat\nu_{x,\xi}} \in \Lrm^{1}_{\hat\lambda \otimes \hat\rho_x }(\cl \Omega \times Z)\,.
		\]
		Thus, we may re-write~\eqref{eq:representation} as the equivalence $u \, (\hat \lambda \otimes \hat \rho_x) \equiv \Leb^d_\Omega \otimes \Leb^d_Z$. 
		On the one hand, this gives $u \, [(\hat \lambda^s \otimes \hat\rho_x) + (\Leb^d_\Omega \otimes \hat \rho_x^s)] \equiv 0$, which, in turn, is equivalent to 
		\begin{equation}\label{eq:tec1}
		\supp\,(\hat \nu_{x,\xi}) \subset \partial \Bbb_E \quad \text{$[(\hat \lambda^s \otimes \hat\rho_x) + (\Leb^d_\Omega \otimes \hat \rho_x^s)]$-almost everywhere}.
		\end{equation}
		On the other hand, the same equivalence yields 
		\begin{equation}\label{eq:tec2}
		\lambda^{ac}(x) \cdot \rho_x^{ac}(\xi) \cdot  u(x,\xi) =  1 \quad \text{$\Leb^d_\Omega \otimes \Leb^d_Z$-almost everywhere}.
		\end{equation}
		In particular $u$ is $\Leb^d_\Omega \otimes \Leb^d_Z$-measurable.
		
		The goal now is to exhibit a $\bm{\nu} = (\nu,\lambda,\rho,\nu^\infty) \in \Y^2(\Omega;E)$ for which $S^*\bm{\nu}= \mu$. 

		\textbf{Construction of $\bm{\nu}$.} Let us being by defining  the weak-$*$ $(\Leb^d_\Omega \otimes \Leb^d_Z)$-measurable map 
		\[
		(x,\xi) \mapsto \nu_{x,\xi}\in \mathrm{Prob}(E)\,,
		\]
		where each $\nu_{x,\xi}$ is the probability measure satisfying
		\[
		\dpr{\nu_{x,\xi},h} \coloneqq \frac{1}{u(x,\xi)}  \int_{\Bbb_E}Th(\hat z) \dd\hat \nu_{x,\xi}(\hat z) \qquad \text{for all $h \in \Crm(\beta E)$}\,.
		\]
		That $\nu$ is a weak-$*$ measurable map follows from the measurability of $u$ and the properties of $\hat \nu \restrict \partial \Bbb_E$ in terms of the measure $(\hat \lambda \otimes \hat \rho_x)$. To check that each $\nu_{x, \xi}$ is indeed a probability measure ($\Leb^d_\Omega \otimes \Leb^d_Z$-almost everywhere) follows by testing with $\chi_E$ and using the definition of $u$ (recall that $T\chi_E \equiv 1 - |\frarg |$ as functions on $\Bbb_E$). Moreover, $\dpr{1 + |\frarg|,\nu_{x,\xi}} = u(x,\xi)^{-1} \hat\nu_{x, \xi}(\Bbb_E)$ and hence by~\eqref{eq:tec2} we infer the map $(x,\xi) \mapsto \dpr{1 + |\frarg|,\nu_{x,\xi}}$ is integrable on $\Omega \times Z$.

		We define the remaining  $\lambda,\rho$ and $\nu^\infty$ as follows. First, we set  
		\[
		\lambda \coloneqq \bigg(\int_Z \hat\nu_{x,\xi}(\partial \Bbb_E) \dd \rho_x(\xi)\bigg) \, \hat \lambda \in \M^+(\cl \Omega)\,.
		\]
		Once this positive measure has been defined, we define a map $\rho$ from $\cl \Omega$ into the set $\mathrm{Prob}(Z)$ of probability measures over the $d$-dimensional torus by setting  
		\[
		x \mapsto \rho_x \coloneqq \bigg(\int_Z \hat\nu_{x,\xi}(\partial \Bbb_E)\dd\hat\rho_x(\xi)\bigg)^{-1} m_x\,\hat\rho_x\,.
		\]
		where we have used the short-hand notation $m_x(\xi) \coloneqq \hat\nu_{x,\xi}(\partial \Bbb_E)$. Since by definition $m_x$ is a $\hat \rho_x$-measurable map, we infer that $\rho$ is
		weak-$*$ $\lambda$-measurable.    
		Lastly, we define a map $\nu^\infty$ from $\cl \Omega \times Z$ into $\mathrm{Prob}(\partial \Bbb_E)$ by setting
		\[
		(x,\xi) \mapsto \nu_{x,\xi}^\infty \in \mathrm{Prob}(\partial \Bbb_E)\,,
		\]
		where each $\nu_{x,\xi}^\infty$ is given (in terms of duality) by
		\[
		\dpr{h,\nu_{x,\xi}} \coloneqq \dashint_{\partial \Bbb_E}h(\hat z) \dd \hat \nu_{x,\xi}(\hat z) \qquad \text{for all $h \in \Crm(\partial \Bbb_E)$}\,.
		\]
		That $\nu^\infty$ is a weak-$*$ $(\lambda \otimes \rho_x)$-measurable map is then a consequence of the weak-$*$ measurability of $\hat \nu$, and the way $\lambda$ and $\rho$ are defined. 
		Altogether these properties imply  $\bm{\nu} \coloneqq (\nu,\lambda,\rho,\nu^\infty) \in \Y^2(\Omega;E)$. 
		
		\textbf{Pre-image property $(S^* \bm{\nu} = \mu$).} Let $f \in \E^2(\Omega;E)$ be a fixed but arbitrary integrand, later we shall exploit this choice through duality. By construction of $\bm{\nu}$ we get that
		\begin{align*}
		\ddprb{f,\bm{\nu}} & = \int_{\Omega}\int_Z \dpr{f(x,\xi,\frarg),\nu_{x,\xi}} \dd \xi \dd x + \int_{\cl \Omega}\int_Z\dpr{f^\infty(x,\xi,\frarg),\nu^\infty_{x,\xi}} \dd\rho_x(\xi) \dd \lambda(x)\\
		& = \int_{\Omega}\bigg(\int_Z\bigg(\int_{\Bbb_E}Tf(x,\xi,z)\dd\hat\nu_{x,\xi}(z)\bigg)\,\hat\rho_x^{ac}(\xi)\dd\xi\bigg)\,\hat\lambda^{ac}(x) \dd x \\
		& \quad + \int_{\Omega}\bigg(\int_Z\bigg(\dashint_{\partial \Bbb_E} Tf(x,\xi,z)\dd\hat\nu_{x,\xi}(z)\bigg)\,m_x(\xi)\,\hat\rho^{ac}_x(\xi) \dd\xi\bigg)\,\hat\lambda^{ac}(x) \dd x \\
		& \qquad + \int_{\Omega}\bigg(\int_Z\bigg(\int_{\partial \Bbb_E} Tf(x,\xi,z)\dd\hat\nu_{x,\xi}(z)\bigg)\dd\hat\rho_x^s(\xi)\bigg)\,\hat\lambda^{ac}(x) \dd x \\
		& \quad\qquad + \int_{\cl\Omega}\bigg(\int_Z\bigg(\int_{\partial \Bbb_E} Tf(x,\xi,z)\dd\hat\nu_{x,\xi}(z)\bigg)\,\dd\hat\rho_x(\xi)\bigg)\,\hat\lambda^{s}(x)\,,
		\end{align*}
		where in passing to the last equality we used that 
		$f^\infty(x,\xi,\frarg) \equiv Tf(x,\xi,\frarg)$ as functions over $\partial \Bbb_E$. Furthermore, since $m_x(\xi) = \hat\nu_{x,\xi}(\partial \Bbb_E)$, the first two lines of the last equality above add up to
		\[
		\int_{\cl{\Omega \times Z \times \Bbb_E}} Tf \dd(\hat\lambda^{ac}\Leb^d_\Omega\otimes\hat\rho_x^{ac}\Leb^d_Z \otimes\hat\nu_{x,\xi})\,.
		\]
		On the other hand, using~\eqref{eq:tec1} we may re-write the last two lines in the expression of $\ddprb{f,\bm{\nu}}$ as
		\[
		\int_{\cl{\Omega \times Z \times \Bbb_E}} Tf  \dd(\hat\lambda^{ac}\Leb^d_\Omega\otimes\hat\rho_x^s\otimes\hat\nu_{x,\xi})
		+ \int_{\cl{\Omega \times Z \times \Bbb_E}} Tf  \dd(\hat\lambda^s\otimes\hat\rho_x\otimes\hat\nu_{x,\xi})\,.
		\]
		Regrouping these three summands together we deduce 
		\[
		\ddprb{f,\bm{\nu}}= \int_{\cl{\Omega \times Z \times \Bbb_E}} Tf \dd\mu = \dpr{Tf,\mu} \qquad \text{for all $f \in \E^2(\Omega;E)$}\,.
		\]
		Equivalently,  by a duality argument,
		\[
		\dprb{\Phi,\mu} = \ddprb{S\Phi,\bm{\nu}} = \dprb{\Phi,S^*\bm{\nu}} \quad \text{for all $\Phi \in \Crm(\cl{\Omega \times Z \times \Bbb_E})$}\,.
		\]
		Thence $\mu \equiv S^*\bm{\nu}$ as measures in $\cl{\Omega \times Z \times \Bbb_E}$. This proves the sufficiency.
	\end{proof}

	A direct consequence of this characterization and Remark~\ref{rem:iso} is the following fundamental property of Young measures. Here and in what follows, we shall write 
	\[
	{[\frarg]_{E}} \coloneqq \chi_{\cl \Omega} \otimes \chi_Z \otimes (1 + |\frarg|_E).
	\]
	

	\begin{proposition}[compactness of two-scale Young measures]\label{cor:Compactnessofyoungmeasures} The set of two-scale* Young measures $\Y^2(\Omega;E)$ is sequentially weak-$\ast$ closed in $\E^2(\Omega;E)^*$. 
		Moreover, each subset $\mathcal Y \subset \Y^2(\Omega;E)$ satisfying
		\[
		\sup \; \setBB{\int_{\cl \Omega}\int_Z \dpr{1 + |\frarg|,\nu_{x, \xi}} \dd \xi \dd x + \lambda(\cl \Omega)}{\bm{\nu} = (\nu,\lambda,\rho,\nu^\infty) \in \Ycal} < \infty
		\]
		is pre-compact with respect to the relative weak-$\ast$ topology on $\Y^2(\Omega;E)\subset \E^2(\Omega;E)^*$.
	\end{proposition}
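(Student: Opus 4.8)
The plan is to prove both statements by transporting them, via the weak-$*$ isomorphism $S^*$, to the ambient space $\M(\cl{\Omega\times Z\times\Bbb_E})$ and then invoking the explicit description of $S^*[\Y^2(\Omega;E)]$ from Lemma~\ref{lem:representationformula} together with the topological equivalence recorded in Remark~\ref{rem:iso}. Concretely, by Remark~\ref{rem:iso} it suffices to establish (a) that $S^*[\Y^2(\Omega;E)]$ is sequentially weak-$*$ closed in $\M(\cl{\Omega\times Z\times\Bbb_E})$, and (b) that $S^*[\Ycal]$ is relatively sequentially weak-$*$ compact there; pulling back along the weak-$*$ homeomorphism $T^* = (S^*)^{-1}$ then yields the claims for $\Y^2(\Omega;E)$ and $\Ycal$, the first sentence of the proposition being subsumed in (a).

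For (a) I would take $\mu_j = S^*\bm\nu_j$ with $\mu_j \toweakstar \mu$ in $\M(\cl{\Omega\times Z\times\Bbb_E})$ and verify the two conditions characterizing $S^*[\Y^2(\Omega;E)]$ in Lemma~\ref{lem:representationformula}. Positivity of $\mu$ is immediate by testing against nonnegative elements of $\Crm(\cl{\Omega\times Z\times\Bbb_E})$. For the identity~\eqref{eq:representation}, note that for each $\phi\in\Crm(\cl\Omega)$ and $g\in\Crm(Z)$ the map $(x,\xi,\hat z)\mapsto \phi(x)g(\xi)(1-|\hat z|)$ is a bounded continuous function on the compact set $\cl{\Omega\times Z\times\Bbb_E}$, hence an admissible test function for weak-$*$ convergence; passing to the limit $j\to\infty$ in~\eqref{eq:representation} — whose right-hand side $\int_\Omega\phi\dd x\,\int_Z g\dd\xi$ does not depend on $j$ — shows that $\mu$ satisfies~\eqref{eq:representation} as well, so $\mu\in S^*[\Y^2(\Omega;E)]$ by Lemma~\ref{lem:representationformula}.

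For (b) the key observation is that the functional bounding $\Ycal$ is exactly the total variation of $S^*\bm\nu$. To see this I would apply the identity $\dprb{Tf,S^*\bm\nu} = \ddprb{f,\bm\nu}$ to the integrand $f = [\frarg]_E = \chi_{\cl\Omega}\otimes\chi_Z\otimes(1+|\frarg|_E)$, for which $Tf\equiv 1$ and $f^\infty(x,\xi,z) = |z|$, so that $f^\infty\equiv 1$ on $\partial\Bbb_E$. Since each $\nu^\infty_{x,\xi}$ and each $\rho_x$ is a probability measure, the second (recession) term of $\ddprb{[\frarg]_E,\bm\nu}$ collapses to $\lambda(\cl\Omega)$, and hence, using positivity of $S^*\bm\nu$,
\[
\bigl|S^*\bm\nu\bigr|(\cl{\Omega\times Z\times\Bbb_E}) = \dprb{1,S^*\bm\nu} = \int_{\cl\Omega}\int_Z \dprb{1+|\frarg|,\nu_{x,\xi}}\dd\xi\dd x + \lambda(\cl\Omega),
\]
the integral over $\cl\Omega$ coinciding with the one over $\Omega$ since $\Leb^d(\partial\Omega)=0$. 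Thus uniform boundedness of $\Ycal$ is precisely norm-boundedness of $S^*[\Ycal]$ in $\M(\cl{\Omega\times Z\times\Bbb_E}) = \Crm(\cl{\Omega\times Z\times\Bbb_E})^*$. Since $\cl{\Omega\times Z\times\Bbb_E}$ is compact metric, $\Crm(\cl{\Omega\times Z\times\Bbb_E})$ is separable; by Banach--Alaoglu the bounded set $S^*[\Ycal]$ is relatively weak-$*$ compact and metrizable, hence relatively sequentially weak-$*$ compact. Combining this with part (a), every sequence in $S^*[\Ycal]$ has a weak-$*$ convergent subsequence whose limit lies in $S^*[\Y^2(\Omega;E)]$, and transporting by $T^*$ gives pre-compactness of $\Ycal$ inside the norm-bounded, hence weak-$*$ metrizable, portion of $\E^2(\Omega;E)^*$ that contains it.

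I expect the only genuinely non-bookkeeping step to be the identification of $|S^*\bm\nu|(\cl{\Omega\times Z\times\Bbb_E})$ with the bounding functional: it hinges on the recession computation $([\frarg]_E)^\infty\equiv 1$ on $\partial\Bbb_E$ and on $\rho_x,\nu^\infty_{x,\xi}$ being probability measures. Everything else — positivity and the stability of~\eqref{eq:representation} under weak-$*$ limits, and the passage back and forth through $S^*$ and $T^*$ — is routine once Lemma~\ref{lem:representationformula} and Remark~\ref{rem:iso} are in place.
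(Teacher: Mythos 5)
Your proposal is correct and follows essentially the same route as the paper's proof: transporting via $S^*$ to $\M(\cl{\Omega\times Z\times\Bbb_E})$, using Lemma~\ref{lem:representationformula} and the stability of~\eqref{eq:representation} under weak-$*$ limits for closedness, identifying the bounding functional with the total variation through the test integrand $[\frarg]_E$ (equivalently $T(1+|\frarg|)=\chi_{\Bbb_E}$), and concluding by Banach--Alaoglu and Remark~\ref{rem:iso}. The extra details you supply (positivity of the limit, metrizability on bounded sets) are fine and merely make explicit what the paper leaves implicit.
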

	\begin{proof}
		To verify that $\Y^2(\Omega;E)$ is sequentially weak-$*$ closed it suffices (by Remark~\ref{rem:iso} and Lemma~\ref{lem:representationformula}) to observe that~\eqref{eq:representation} is a closed property with respect to the sequential weak-$*$ convergence of measures in $\M^+(\cl \Omega \times Z \times \Bbb_E)$. Let $\Ycal$ as in the assumptions and observe that $T(1 + |\frarg|) =  \chi_{\Bbb_E}$ so that
		\[
		\sup_{\mu \in S^*[\Ycal]} \, |\mu|(\cl \Omega\times Z \times \Bbb_E) = \sup_{\bm{\nu} \in \Ycal} \, \ddprb{{[\frarg]_{E}},\bm{\nu}} < \infty. 
		\] 
		The Banach--Alaoglu theorem tells us that $S^*[\Ycal]$ is pre-compact with respect to the weak-$*$ topology of measures. Hence, again by Remark~\ref{rem:iso}, $\Ycal$ is 
		weak-$*$ pre-compact with respect to the relative weak-$*$ topology of $\E^2(\Omega;E)^*$. 
	\end{proof}
	
	We close this section with an important separability property. The proof of this result follows from a straightforward adaptation of the arguments given in the proof of Lemma~3 in \cite{kristensen2010characterizatio}.
	\begin{lemma} \label{lem:density}
		There exists a countable family of non-negative integrands $\{\phi_m \otimes g_m\otimes h_m\}_{m\in \N} \subset \E^2(\Omega;E)$
		that separates $\Y^2(\Omega;E)$. That is, if $\bm{\nu}, \bm{\sigma}\in \Y^2(\Omega;E)$, then
		\[
		\ddprb{ f_m, \bm{\nu}} = \ddprb{ f_m, \bm{\sigma}} \;\; \text{for all $m \in \N$ \quad $\Rightarrow$ \quad $\bm{\nu} \equiv \bm{\sigma}$ \, in \, $\Y^2(\Omega;E)$}.
		\]
		Moreover the family can be chosen so that each $h_m$ is uniformly Lipschitz on $E$ and each $g_m$ is uniformly continuously differentiable.
	\end{lemma}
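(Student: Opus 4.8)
The plan is to transport the separation problem through the isometric compactification $T\colon\E^2(\Omega;E)\to\Crm(K)$, $K\coloneqq\cl{\Omega\times Z\times\Bbb_E}$, and to use that $K$ is a compact metric space, so that $\Crm(K)$ and hence $\E^2(\Omega;E)$ are separable. First I would note that it suffices to produce a \emph{countable} family $\{f_m\}\subset\E^2(\Omega;E)$ whose linear span is \emph{dense}: since each functional $f\mapsto\ddprb{f,\bm{\nu}}$ is bounded on $\E^2(\Omega;E)$ by the linear-growth estimate recorded after~\eqref{eq:Yproduct}, the equalities $\ddprb{f_m,\bm{\nu}}=\ddprb{f_m,\bm{\sigma}}$ for all $m$ extend by linearity and continuity to $\ddprb{f,\bm{\nu}}=\ddprb{f,\bm{\sigma}}$ for every $f\in\E^2(\Omega;E)$, which is exactly $\bm{\nu}\equiv\bm{\sigma}$ under the identification $\Y^2(\Omega;E)\subset\E^2(\Omega;E)^*$.

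To build this family I would proceed as follows. By the Stone--Weierstrass theorem the linear span of the product functions $\phi\otimes g\otimes\psi$, with $\phi\in\Crm(\cl\Omega)$, $g\in\Crm(Z)$, $\psi\in\Crm(\cl\Bbb_E)$, is dense in $\Crm(K)$, being a point-separating subalgebra that contains the constants; a routine telescoping estimate then reduces the matter to $\phi,g,\psi$ ranging over fixed countable dense subsets of the three factor spaces, which may be taken to consist of polynomials on $\cl\Omega$ and $\cl\Bbb_E$ and of trigonometric polynomials on $Z$. To reconcile nonnegativity with the regularity asked for in the statement, I would replace each member $u$ of these three families by $u+\|u\|_\infty$ and adjoin the constant $1$; this alters neither the smoothness nor the closed linear span but makes every member nonnegative. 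Finally set $h\coloneqq S\psi$, where $S=T^{-1}$ is the inverse compactification, so that $h\in\E^0(E)$, $Th=\psi$, and $h\ge 0$ because $S\psi(z)=(1+|z|)\psi\!\left(z/(1+|z|)\right)$ and $\psi\ge 0$; then $f_m\coloneqq\phi\otimes g\otimes h$ (letting $\phi,g$ run over the first two families and $h=S\psi$ over the third, and enumerating) are nonnegative elements of $\E^2(\Omega;E)$ with $Tf_m=\phi\otimes g\otimes\psi$. Consequently $\spn\{Tf_m\}$ is dense in $\Crm(K)$ and, $T$ being an isometric isomorphism, $\spn\{f_m\}$ is dense in $\E^2(\Omega;E)$; together with the first paragraph this proves the separation property. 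The $g_m$ are continuously differentiable (indeed smooth) by construction.

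It remains to check that each $h_m=S\psi_m$ is globally Lipschitz on $E$. With $\beta(z)\coloneqq z/(1+|z|)$ one has $h_m(z)=(1+|z|)\,\psi_m(\beta(z))$, and on $E\setminus\{0\}$ the chain rule expresses $\nabla h_m(z)$ as the sum of $\psi_m(\beta(z))\,\frac{z}{|z|}$ and a term of the form $(1+|z|)$ times the Jacobian $D\beta(z)$ applied to $\nabla\psi_m(\beta(z))$. Since $\psi_m$ is $\Crm^1$ on the compact set $\cl\Bbb_E$, its value and gradient are bounded there, and a direct computation of $D\beta$ gives $\|D\beta(z)\|\le C(1+|z|)^{-1}$; hence $\nabla h_m$ is bounded on $E\setminus\{0\}$, so $h_m$ — continuous on $E$ and locally Lipschitz with bounded gradient off the origin — is Lipschitz on all of $E$. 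This verification that the compactification $S$ carries nonnegative $\Crm^1$ integrands on $\cl\Bbb_E$ to nonnegative globally Lipschitz integrands on $E$, together with the bookkeeping that keeps nonnegativity compatible with the prescribed smoothness, is the only part of the argument that is not entirely routine; the remainder is a direct adaptation of the proof of Lemma~3 in \cite{kristensen2010characterizatio}.
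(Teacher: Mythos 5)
Your argument is correct and follows exactly the route the paper intends: it is the standard adaptation of Lemma~3 of \cite{kristensen2010characterizatio} that the authors invoke, namely transporting the problem through the isometry $T$ to $\Crm\big(\cl{\Omega\times Z\times\Bbb_E}\big)$, using Stone--Weierstrass/separability to get a countable dense family of tensor products, adjusting by constants for nonnegativity, and pulling back with $S$ while checking that $S$ turns $\Crm^1$ functions on $\cl{\Bbb_E}$ into globally Lipschitz integrands. The verification of the Lipschitz bound for $S\psi$ and the reduction of separation to density of $\spn\{f_m\}$ via boundedness of $\ddprb{\frarg,\bm{\nu}}$ are both sound, so no gaps remain.
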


	\subsection{Generating sequences}
	
	Vector-valued Radon measures can be naturally identified with a Young measure via the (compact) embedding 
	\[
	\M(\Omega;E) \embed \Y^2(\Omega;E) : \mu \mapsto \bm{\nu}_\mu \coloneqq (\delta_{\mu^{ac}},|\mu^s|,\Leb^d_Z,\delta_{\mu_s})\,.
	\]
	Given an integrand $f \in \Rbf^2(\Omega;E)$ and a positive real $\eps$, we may define a functional 
	\[
	I^\eps_f(\mu) \coloneqq \int_{\Omega} f(x,x/\eps,\mu^{ac}(x)) \dd x  
	+  \int_{\Omega } f^\infty(x,x/\eps,\mu_s(x)) \dd |\mu^s|(x)\,,
	\] 
	defined on vector-valued Radon measures $\mu \in \M(\Omega;\R^N)$. 
	\begin{definition}[generating sequence]
		Let $\eps \searrow 0$ be an infinitesimal sequence of positive real numbers. We say that a sequence $(\mu_\eps)_\eps \subset \M(\Omega;\R^N)$ \emph{generates} the two-scale* Young measure $\bm{\nu} \in \Y^2(\Omega;E)$ if and only if 
		\begin{align*}
		I^\eps_f(\mu_\eps)  \to  \ddprb{f,\nu} \quad \text{for all $f \in \E^2(\Omega;E)$\,.}
		\end{align*}
		In the case the domain of convergence is understood we simply write $\mu_\eps \toYY \bm{\nu}$.
	\end{definition}

	\subsection{Proof of Theorem~\ref{thm:representation}}
	We are now ready to give the proof of Theorem~\ref{thm:representation} which asserts that every uniformly bounded sequence of measures generates (up to a subsequence) a two-scale* Young measure. It is worthwhile to mention the proof is not an immediate consequence of the compactness of Young measures (Corollary~\ref{cor:Compactnessofyoungmeasures}) since the microscopic variable \enquote{$x/\eps$} does not appear in the bi-linear pairing $\ddprb{\frarg,\frarg}$. Instead, the argument relies on the careful inspection of the limiting two-scale* Young measures, Proposition~\ref{lem:representationformula}, and the topological equivalence of Remark~\ref{rem:iso}.
	\begin{proof} In the context of the notation introduced above, we may re-formulate the statement of Theorem~\ref{thm:representation}  as follows: let $(\mu_\eps)\subset \M(\Omega;E)$ a sequence of measures with uniformly bounded variation. Then, there exists  a two-scale* Young-measure $\bm{\nu} \in \Y^2(\Omega;E)$ satsifying (up to a subsequence)
		\begin{equation}
		\label{eq:goal1}
		I_f^\eps(\mu_\eps) \to \ddprb{f,\bm{\nu}} \quad \text{for all $f \in \E^2(\Omega;E)$}.
		\end{equation}
		
		
		\proofstep{Step~1.}  Let $\delta > 0$. For an integrand $\Phi \in \Crm(\cl{\Omega \times Z \times \Bbb_E})$ define a continuous linear functional by setting
		\[
		L_\delta(\Phi)  \coloneqq I^\delta_{(S\Phi)}(\mu)\,. 
		\]
		By the definition of the map $f \mapsto I^\delta_f$ with $\eps = \delta$ we obtain the estimate $| \langle L_\eps, \Phi \rangle | \leq \|\Phi\|_{\infty}( \Leb^d(\Omega)+ \|\mu_\eps\|)$ for every $\eps > 0$ of the infinitesimal sequence. The Banach--Alaoglu theorem and the Riesz representation theorem then yield the existence of a subsequence $(\varepsilon_k)_{k \in \Nbb}$ 
		and a measure $\mu_L\in \M\big(\cl{\Omega \times Z \times \Bbb_E}\big)$ satisfying 
		\[
		L_{\varepsilon_k} \toweakstar \dpr{\mu_L,\frarg} \quad \text{as functionals in $\Crm(\cl{\Omega \times Z \times \Bbb_E})^*$}.
		\] 
		Testing this convergence with an integrand of the form $\Phi =  \phi \otimes g \otimes (1 - |\frarg|)$ 
		and using that $S\Phi = \phi \otimes g \otimes \chi_{E}$ we further deduce
		\[
		L_\eps(\Phi) = \int_{\Omega} \phi(x) g(x/\eps_k) \dd x  \to  \dpr{L,\Phi}\,.
		\]
		This gives
		\[
		\int_{\cl{\Omega \times Z \times \Bbb_E}} \phi(x)g(\xi)(1 - |\hat z|) \dd L(x,\xi,\hat z) = \int_{\Omega}\phi(x)\dd x \int_Z  g(\xi) \dd \xi
		\]
		for all $\phi \in \Crm(\cl \Omega)$ and $g \in \Crm(Z)$. 
		We apply Lemma~\ref{lem:representationformula} to the measure $L$, to find a two-scale* Young measure  $\bm{\nu} \in \Y^2(\Omega;E)$ with $S^*\bm{\nu} = L$. 
		
		\proofstep{Step~2.} Using the commutative diagram~\eqref{diagram} and the identity {$S \circ T = \id_{\E^2(\Omega;E)}$} we conclude that 
		(recall that  $Tf(x,\xi,\frarg) \equiv f^\infty(x,\xi,\frarg)$ as functions over $\partial \Bbb_E$)
		\[
		I^\eps_f(\mu_\eps) = L_\eps(Tf) \to \dpr{L,Tf} = \dpr{S^*\bm{\nu},Tf} = \ddprb{\bm{\nu},f} \qquad \text{for all $f \in \E^2(\Omega;E)$}\,.
		\]
		This proves~\eqref{eq:goal1}. \end{proof}

	\subsection{Barycenter measures and two-scale convergence}
	
	We now turn to the concept of two-scale convergence. Following~\cite{nguetseng1989a-general-conve} we extend this notion to the two-scale convergence of measures as follows.
	
	\begin{definition}[two-scale convergence]\label{def:tsc} Let $\eps \searrow 0$ be a sequence of infinitesimal real numbers. Let also $\kappa \in \M^+(\cl \Omega)$ be a positive measure, and $\theta$ be a {weak-$*$} $\kappa$-measurable map from $\cl \Omega$ into $\M(Z;E)$.  We say that the sequence of measures $(\mu_\eps)_\eps \subset \M(\Omega;E)$ two-scale converges (as $\eps \todown 0$) to the generalized product measure $\mu = \kappa \otimes \theta_x$ if and only if 
		\[
		\int_\Omega \Psi\bigg(x,\frac{x}{\eps}\bigg)\dd \mu_\eps(x)  \to \int_{\cl \Omega}\bigg(\int_{Z} \Psi(x,\xi) \dd \theta_x(\xi) \bigg) \dd \kappa(x) \quad \text{for all $\Psi \in \Crm(\cl \Omega \times Z)$}.
		\]
	\end{definition}
	This {limit} concept is linked to the notion of \emph{barycenter} and \emph{second-scale barycenter} of a two-scale* Young measure (defined below) which will be significant for our techniques.
	
	\begin{definition}[barycenter]\label{def:bary}
		Let $\bm{\nu}  = (\nu,\lambda,\rho,\nu^\infty) \in \Y^2(\Omega;E)$. We define the \emph{barycenter} of $\bm{\nu}$ to be the $E$-valued  measure in $\M (\cl\Omega; E)$ defined as 
		\begin{align*}
		[\bm{\nu}] \coloneqq \bigg(\int_{Z} \dprb{ \id_{E},\nu_{x,\xi} } \dd \xi \bigg) \Leb^d_\Omega  +  \bigg( \int_{Z} \dprb{ \id_{\partial \Bbb_E},\nu^\infty_{x,\xi} } \dd \rho_x(\xi)\bigg) \lambda\,.
		\end{align*}
	\end{definition}
	Let $\bm{\nu} = (\nu,\lambda,\rho,\nu^\infty) \in \Y^2(\Omega;E)$ be fixed. By the Radon--Nikodym decomposition theorem, applied to the measure $\lambda$, there exist a partition of $\cl \Omega = \reg_{\bm{\nu}}(\Omega) \cup \sing_{\bm{\nu}}(\Omega)$ by subsets satisfying 
	%
	\[
	\Leb^d(\sing_{\bm{\nu}}(\Omega)) = \lambda^s(\reg_{\bm{\nu}}(\Omega)) = 0\,.
	\]
	Moreover, 
	\[
	\sing_{\bm{\nu}}(\Omega) = \setBB{x \in \cl \Omega}{\frac{\dd \Leb^d}{\dd |\lambda^s|}(x) = 0}.
	\]
	We are now in position to give the notion of second-scale barycenter.
	
	\begin{definition}[second-scale barycenter]\label{def:bary2}
		Let $\bm{\nu} = (\nu,\lambda,\rho,\nu^\infty) \in \Y^2(\Omega;E)$. The \emph{second-scale barycenter} of $\bm{\nu}$ is the weak-$\ast$ $(\Leb^d_\Omega + \lambda^s)$-measurable map $\bbpr{\bm{\nu}} : x \mapsto \bbpr{\bm{\nu}}_x \in \M(Z;E)$ where $\bbpr{\bm{\nu}}_x$ is the measure defined by the values on Borel subsets $V \subset Z$ as 
		\[
		\bbpr{\bm{\nu}}_{x}(V) = 
		\begin{cases} \displaystyle \int_V \dprb{\id_{E},\nu_{x,\xi}} \dd \xi \; + \;  \lambda^{ac}(x) \int_V \dprb{\id_{\partial \Bbb_E},\nu_{x,\xi}^\infty} \dd \rho_{x}(\xi) & \text{if
			$x \in \reg_{\bm{\nu}}(\Omega)$}\\
		\\
		\displaystyle \int_V \dprb{\id_{\partial \Bbb_E},\nu_{x,\xi}^\infty} \dd \rho_{x}(\xi) & \text{if $x \in \sing_{\bm{\nu}}(\Omega)$}
		\end{cases}\,.
		\]
	\end{definition}
	The barycenter $[\bm{\nu}]$ can be recovered by integration from the second-scale barycenter. Indeed,  
	$[\bm{\nu}] = \bbpr{\bm{\nu}}(Z) \, (\Leb^d_\Omega + \lambda^s)$ and
	in particular
	\begin{equation}\label{eq:density2}
	\bbpr{\nu}_x(Z) = \begin{cases}\displaystyle
	\frac{\dd [\nu]}{\dd \Leb^d}(x) & \text{$\Leb^d$-almost everywhere in $\Omega$}\\
	\\
	\displaystyle \frac{\dd [\nu]}{\lambda^s}(x) & \text{$\lambda^s$-almost everywhere in $\cl \Omega$}\\
	\end{cases}\,.
	\end{equation}
	Moreover, if $\mu_\eps \toY^2 \bm{\nu}$, then 
	\begin{equation}\label{eq:1-2}
	\mu_\eps \toweakstar [\bm{\nu}] \quad \text{as measures on $\cl \Omega$}
	\end{equation}
	and
	\begin{equation}\label{eq:1-3}
	\text{$\mu_\eps$ two-scale converges to $(\Leb^d_\Omega + \lambda^s) \otimes \bbpr{\bm{\nu}}_x$}.
	\end{equation}


	\begin{corollary}[compactness of two-scale convergence]\label{cor:tsc} Let $\eps \searrow 0$ be an infinitesimal sequence of real numbers and let $(\mu_\eps)_\eps \subset \M(\Omega;E)$ be a sequence of measures with uniformly bounded total variation.
		Then there exists a subsequence $(\eps_k)_{k \in \Nbb}$, a positive measure $\kappa \in \M(\cl \Omega)$, and a weak-$*$ $\kappa$-measurable map $\theta$ from $\cl \Omega$ into $\M(Z;E)$ such that
		\[
		\mu_{\eps_k} \; \text{two-scale converges to $\kappa \otimes \theta_x$}\,.
		\]
		Moreover $(\kappa \otimes |\theta_x|)(\cl \Omega \times Z) \le \liminf_{\eps \todown 0} |\mu_\eps|(\Omega)$.
	\end{corollary}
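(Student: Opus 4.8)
The plan is to read the statement off the representation Theorem~\ref{thm:representation} together with the identification~\eqref{eq:1-3} of the two-scale limit with the second-scale barycenter; most of the work, namely the compactness, has already been done, so what remains is the bookkeeping of subsequences and the total-variation estimate.

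First I would fix the subsequence in two stages. Extract $(\eps_k)_k$ so that $|\mu_{\eps_k}|(\Omega)\to\liminf_{\eps\todown 0}|\mu_\eps|(\Omega)$; then, since $(\mu_{\eps_k})_k$ is still uniformly bounded in total variation, apply Theorem~\ref{thm:representation} and pass to a further subsequence (not relabelled) that generates a two-scale* Young measure $\bm{\nu}=(\nu,\lambda,\rho,\nu^\infty)\in\Y^2(\Omega;E)$, i.e.\ $\mu_{\eps_k}\toYY\bm{\nu}$; this further refinement clearly preserves $|\mu_{\eps_k}|(\Omega)\to\liminf_{\eps\todown 0}|\mu_\eps|(\Omega)$. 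Now set
\[
\kappa\coloneqq\Leb^d_\Omega+\lambda^s\in\M^+(\cl\Omega),\qquad \theta_x\coloneqq\bbpr{\bm{\nu}}_x,
\]
where $\theta$ is the second-scale barycenter of Definition~\ref{def:bary2}, which by construction is a weak-$*$ $\kappa$-measurable map from $\cl\Omega$ into $\M(Z;E)$. By~\eqref{eq:1-3} the sequence $\mu_{\eps_k}$ two-scale converges to $\kappa\otimes\theta_x$, which already establishes the existence part of the Corollary. (If one prefers not to use~\eqref{eq:1-3} as a black box, it can be re-derived on the spot by testing $\mu_{\eps_k}\toYY\bm{\nu}$ against the linear integrands $f(x,\xi,z)=\Psi(x,\xi)\,z_j$ with $\Psi\in\Crm(\cl\Omega\times Z)$ and regrouping the regular and singular parts of $\lambda$ as in Definition~\ref{def:bary2}.)

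For the total-variation bound, write $\mu\coloneqq\kappa\otimes\theta_x\in\M(\cl\Omega\times Z;E)$ (finiteness will follow from the estimate itself). Fix $\Phi\in\Crm(\cl\Omega\times Z;E)$ with $\|\Phi\|_\infty\le 1$; decomposing $\Phi$ into scalar components and applying Definition~\ref{def:tsc} componentwise gives
\[
\int_{\cl\Omega\times Z}\Phi\cdot\dd\mu \;=\; \lim_{k\to\infty}\int_\Omega\Phi(x,x/\eps_k)\cdot\dd\mu_{\eps_k}(x) \;\le\; \liminf_{k\to\infty}|\mu_{\eps_k}|(\Omega) \;=\; \liminf_{\eps\todown 0}|\mu_\eps|(\Omega),
\]
using $\bigl|\int_\Omega\Phi(x,x/\eps_k)\cdot\dd\mu_{\eps_k}(x)\bigr|\le\|\Phi\|_\infty|\mu_{\eps_k}|(\Omega)$ and the first-stage choice of subsequence. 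Taking the supremum over all such $\Phi$ yields $|\mu|(\cl\Omega\times Z)\le\liminf_{\eps\todown 0}|\mu_\eps|(\Omega)$, and since $|\kappa\otimes\theta_x|=\kappa\otimes|\theta_x|$ for weak-$*$ measurable families (a standard consequence of the disintegration theorem, see~\cite{ambrosio2000functions-of-bo}), this is exactly the asserted inequality $(\kappa\otimes|\theta_x|)(\cl\Omega\times Z)\le\liminf_{\eps\todown 0}|\mu_\eps|(\Omega)$.

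The only points requiring any care — and hence the closest thing to an obstacle — are the order of the two subsequence extractions (the $\liminf$ of the total variations must be pinned down \emph{before} invoking Theorem~\ref{thm:representation}, since the two-scale limit only sees the further subsequence) and the reduction from the scalar test functions of Definition~\ref{def:tsc} to the vector-valued $\Phi$ used to compute the total variation of $\mu$, which is dispatched by splitting $\Phi$ into components; neither is substantial.
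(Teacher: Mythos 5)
Your proposal is correct, and its first half --- extracting a generating subsequence via Theorem~\ref{thm:representation}, setting $\kappa = \Leb^d_\Omega + \lambda^s$ and $\theta = \bbpr{\bm{\nu}}$, and invoking~\eqref{eq:1-3} --- is exactly the paper's argument. Where you genuinely diverge is the total-variation estimate: the paper deduces it from the weak-$*$ convergence $\mu_\eps \toweakstar [\bm{\nu}] = \theta(Z)\,\kappa$ on $\cl\Omega$ (lower semicontinuity of the total variation of the first-scale barycenter), whereas you bound the total variation of the two-scale limit $\kappa\otimes\theta_x$ itself, by testing the two-scale convergence with $E$-valued test functions on $\cl\Omega\times Z$ and using $|\kappa\otimes\theta_x| = \kappa\otimes|\theta_x|$. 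Your route is in fact the sharper one: lower semicontinuity along $\mu_\eps\toweakstar[\bm{\nu}]$ only controls $\int_{\cl\Omega}|\theta_x(Z)|\dd\kappa$, while the claim concerns the larger quantity $\int_{\cl\Omega}|\theta_x|(Z)\dd\kappa$, so your duality argument (or, equivalently, testing the Young-measure convergence with the integrand $\chi_{\cl\Omega}\otimes\chi_Z\otimes|\frarg|$) is what actually delivers the stated inequality, and your preliminary extraction pinning down $\liminf_{\eps}|\mu_\eps|(\Omega)$ \emph{before} applying Theorem~\ref{thm:representation} is likewise needed to compare against the liminf of the full sequence rather than of the chosen subsequence. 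One cosmetic remark: the finiteness of $\kappa\otimes\theta_x$ need not ``follow from the estimate''; it is immediate from Definition~\ref{def:bary2}, since $|\theta_x|(Z)\le \int_Z\dpr{|\frarg|,\nu_{x,\xi}}\dd\xi + \lambda^{ac}(x)$ on $\reg_{\bm{\nu}}(\Omega)$ and $|\theta_x|(Z)\le 1$ on $\sing_{\bm{\nu}}(\Omega)$, both integrable by the definition of a two-scale* Young measure, and checking this first avoids any circularity in using the duality formula for $|\kappa\otimes\theta_x|$.
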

	\begin{proof}Apply Theorem~\ref{thm:representation} and~\eqref{eq:1-3} to the sequence $(\mu_\eps)_\eps$. The first conclusion follows by setting  $\kappa = \Leb^d_\Omega + \lambda^s$ and $\theta = \bbpr{\bm{\nu}}$. The lower semicontinuity of the norms follows from the fact that 
		\[
		\mu_\eps \toweakstar \bpr{\bm{\nu}} = \theta(Z)\, \kappa\quad \text{as measures in $\cl \Omega$}\,.
		\] 
	\end{proof}
	
	\begin{remark}[classical two-scale convergence] Notice that if a sequence of functions $(u_\eps)_\eps$ is equi-integrable (or if its is uniformly bounded in $\Lrm^p$ for some $1 < p \le \infty$) and the sequence $u_\eps \Leb^d_\Omega$ generates a two-scale* Young measure $\bm{\nu} = (\nu,\lambda,\rho,\nu^\infty) \in \Y^2(\Omega;E)$, then $\lambda \equiv 0$ and hence 
		\[
		\mu_\eps \; \text{two-scale converges to $u\, (\Leb^d_\Omega \otimes \Leb^d_Z)$}
		\]
		where
		\[
		u(x,\xi) \coloneqq \dpr{\id_E, \nu_{x,\xi}}\,.
		\]
		Thus the space of all such two-scale limits can be identified with $\Lrm^1(\Omega \times Z;\R^N)$. In particular, our definition of two-scale convergence extends Nguetseng's original definition of two-scale convergence in $\Lrm^p$-spaces \cite{nguetseng1989a-general-conve} (see also \cite{allaire1992homogenization-}).
		
	\end{remark}

	
	\subsubsection{Weighted barycenter measures} 
	It will be often resourceful to interpret the pairing $\ddprb{\frarg,\frarg}$ as a measure over $\cl \Omega$. Let us introduce some additional notation by extending the definitions of barycenter and two-scale barycenter of a two-scale* Young measure $\bm{\nu} \in \Y^2(\Omega;E)$. 
	
	\begin{definition}Let $f \in \Rbf^2(\Omega;E)$. The $f$-\emph{barycenter} of $\bm{\nu} = (\nu,\lambda,\rho,\nu^\infty)$ is the vector-valued measure 
		\begin{align*}
		[f,\bm{\nu}] \coloneqq  \bigg(\int_{Z} \dprb{f(x,\xi,\frarg),\nu_{x,\xi} } \dd \xi \bigg) \Leb^d_\Omega 
		+ \bigg( \int_{Z} \dprb{f^\infty(x,\xi,\frarg),\nu^\infty_{x,\xi} } \dd \rho_x(\xi)\bigg) \lambda\,.
		\end{align*}
	\end{definition}
	Using this notation we get $\bpr{f,\bm{\nu}}(\cl \Omega) = \ddprb{f, \bm{\nu}}$, and $[\bm{\nu}]=\big(\bpr{(\id_{\R^N})_j,\bm{\nu}}\big)_{j=1,\dots,N}$.
	We also define a weighted \emph{second-scale $f$-barycenter}. 
	
	\begin{definition}[second-scale $f$-barycenter] Let $f \in \Rbf^2(\Omega;E)$. We define the $f$-\emph{barycenter} of $\bm{\nu} = (\nu,\lambda,\rho,\nu^\infty)$ as the weak-$*$ $(\Leb^d + \lambda^s)$-measurable map $\bbpr{f,\bm{\nu}} : x \mapsto \bbpr{ f, \bm{\nu}}_{x} \in \M(Z;{E})$ where is the measure defined by its values on Borel subsets $V \subset Z$ as  
		\begin{equation}\label{def:weighted}
		\bbpr{ f, \bm{\nu}}_{x}(V) = 
		\begin{cases}
		\displaystyle \int_V \dprb{f(x, \xi,\frarg),\nu_{x,\xi}}\, \dd \xi \\
		\displaystyle\qquad + \; \lambda^{ac}(x) \int_Z \dprb{f^\infty(x,\xi,\frarg),\nu_{x,\xi}^\infty}\dd \rho_{x}(\xi) \quad & \text{if $x \in \reg_{\bm{\nu}}(\Omega)$}\\
		\\
		\displaystyle \int_V \dprb{f^\infty(x, \xi,\frarg),\nu_{x,\xi}^\infty} \dd \rho_{x}(\xi) & \text{if $x \in \sing_{\bm{\nu}}(\Omega)$}
		\end{cases}.
		\end{equation}
	\end{definition}
	As we have seen before for the barycenter measures, a similar integral property holds for the weighted barycenters, namely $\bpr{f,\bm{\nu}} = \bbpr{f,\bm{\nu}}(Z) \, (\Leb^d_\Omega + \lambda^s)$ which in particular entails the identities 
	\begin{equation}\label{eq:density3}
	\bbpr{f,\bm{\nu}}_x(Z) = \begin{cases}\displaystyle
	\frac{\dd [f,\bm{\nu}]}{\dd \Leb^d}(x) & \text{$\Leb^d$-almost everywhere in $\Omega$}\\
	\\
	\displaystyle \frac{\dd [f,\bm{\nu}]}{\lambda^s}(x) & \text{$\lambda^s$-almost everywhere in $\cl \Omega$}\\
	\end{cases}\,.
	\end{equation}

	\subsection{Heuristics and some generic examples}

	Let us begin by recalling the criterion for $\Lrm^1$-weak compactness due to Dunford \& Pettis: a sequence $(w_\eps)_\eps \subset \Lrm^1(\Omega)$ is $\Lrm^1$-weak relatively compact if and only if it is equi-integrable, that is, whenever
	\[
	\lim_{R \to \infty} \bigg(\limsup_{\eps \todown 0} \int_{\Omega \cap \{|w_\eps| \ge R\}} |w_\eps| \bigg)= 0.
	\]
	At those regions where a sequence fails to be equi-integrable, but its weak-$*$ limit remains absolutely continuous ---which we call \enquote{continuous concentration}, we speak of a \emph{biting limit} of the sequence:
	\begin{lemma}[Chacon]\label{chacon} Let $(w_\eps) \subset \Lrm^1(\Omega)$ be a uniformly bounded sequence. Then there exist $w \in \Lrm^1(\Omega)$, a subsequence $(w_{\eps_j})_{j \in \Nbb}$, and a non-increasing sequence of measurable subsets $(K_m)_{m \in \Nbb}$ of $\Omega$ such that
		\begin{enumerate}
			\item $\Leb^d(K_m) \to 0$ as $m \to \infty$,
			\item $w_{\eps_j} \toweak w$ in $\Lrm^1(\Omega \setminus K_m)$  for all $m \in \Nbb$.
		\end{enumerate}
		Provided that the these hold, we say that $w$ is the biting limit of $(w_{\eps_j})_{j \in \Nbb}$, and the set 
		\[
		B_u \coloneqq \setBB{x \in \Omega}{x \notin \bigcap_{m \in \Nbb} K_m}
		\]
		is called the set of biting points of $(w_{\eps_j})_{j \in \Nbb}$.
	\end{lemma}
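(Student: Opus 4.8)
This is the classical biting lemma of Brooks and Chacon, and the plan is to split the argument into two independent parts: first extract the biting limit $w$ together with the subsequence, then construct the exceptional sets $K_m$.

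\emph{Producing the biting limit.} First I would set $M:=\sup_\eps\|w_\eps\|_{\Lrm^1(\Omega)}$ and, for $k\in\Nbb$, let $w_\eps^k$ be the truncation of $w_\eps$ to $[-k,k]$ (so $w_\eps^k=w_\eps$ where $|w_\eps|\le k$ and $w_\eps^k=\pm k$ elsewhere). For fixed $k$ the family $(w_\eps^k)_\eps$ is bounded in $\Lrm^\infty(\Omega)$, hence equi-integrable since $\Leb^d(\Omega)<\infty$, hence relatively weakly sequentially compact in $\Lrm^1(\Omega)$ by the Dunford--Pettis theorem; a diagonal extraction over $k$ then produces one subsequence $(\eps_j)$ and functions $w^k\in\Lrm^1(\Omega)$ with $w_{\eps_j}^k\toweak w^k$ in $\Lrm^1(\Omega)$ for every $k$. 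Next I would use the pointwise identity $|w_\eps^{k+1}-w_\eps^k|=\min\bigl((|w_\eps|-k)^+,1\bigr)$ together with the elementary estimate $\sum_{k\ge1}\min((t-k)^+,1)\le t$ for all $t\ge0$ to obtain $\sum_k\|w_\eps^{k+1}-w_\eps^k\|_{\Lrm^1(\Omega)}\le\|w_\eps\|_{\Lrm^1(\Omega)}\le M$ uniformly in $\eps$; by weak lower semicontinuity of the $\Lrm^1$-norm this gives $\sum_k\|w^{k+1}-w^k\|_{\Lrm^1(\Omega)}\le M$, so $(w^k)_k$ is Cauchy and converges in $\Lrm^1(\Omega)$ to some $w$, which will be the biting limit.

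\emph{Reducing to equi-integrability off a small set.} The key reduction is that it suffices to produce, for each $m$, a measurable $K_m'\subset\Omega$ with $\Leb^d(K_m')<2^{-m}$ such that $(w_{\eps_j})_j$ is equi-integrable over $\Omega\setminus K_m'$. Granting this, set $K_m:=\bigcup_{i\ge m}K_i'$; it is non-increasing, $\Leb^d(K_m)\le 2^{-m+1}\to0$, and on $\Omega\setminus K_m\subset\Omega\setminus K_m'$ the sequence stays equi-integrable, hence (Dunford--Pettis) relatively weakly compact in $\Lrm^1(\Omega\setminus K_m)$. Since $\|w_{\eps_j}-w_{\eps_j}^k\|_{\Lrm^1(\Omega\setminus K_m)}=\int_{\Omega\setminus K_m}(|w_{\eps_j}|-k)^+\le\int_{(\Omega\setminus K_m)\cap\{|w_{\eps_j}|>k\}}|w_{\eps_j}|$ tends to $0$ \emph{uniformly in $j$} as $k\to\infty$ by equi-integrability, while $w_{\eps_j}^k\toweak w^k$ in $\Lrm^1(\Omega)$ and $w^k\to w$ in $\Lrm^1(\Omega)$, a routine approximation argument (first let $j\to\infty$ with $k$ fixed, then let $k\to\infty$) shows that every weak-$\Lrm^1(\Omega\setminus K_m)$ cluster point of $(w_{\eps_j})_j$ equals $w$; hence $w_{\eps_j}\toweak w$ in $\Lrm^1(\Omega\setminus K_m)$ for all $m$, which is (2), and (1) holds by construction.

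\emph{The exceptional sets — the main obstacle.} What remains, and is the technical heart, is the construction of $K_m'$; this genuinely requires a further passage to a subsequence, because for sequences like $w_j=j\,\chi_{C_j}$ with $\Leb^d(C_j)=1/j$ and $C_j$ equidistributed no fixed set of small measure makes the \emph{whole} original sequence equi-integrable on its complement. I would work with the concentration function $\theta(\delta):=\limsup_{j\to\infty}\sup\{\|w_{\eps_j}\|_{\Lrm^1(E)}:E\subset\Omega,\ \Leb^d(E)\le\delta\}$ and $c:=\lim_{\delta\todown0}\theta(\delta)=\inf_\delta\theta(\delta)$. If $c=0$ the sequence is equi-integrable and $K_m'=\emptyset$ works. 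If $c>0$ one peels off the concentrating mass: choosing scales $\delta_i\todown0$ with $\sum_i\delta_i<\infty$ and, along a suitable subsequence, sets $E_i$ with $\Leb^d(E_i)\le\delta_i$ that successively capture the mass concentrating at scale $\delta_i$, one sets $K_m':=\bigcup_{i\ge m}E_i$ and checks that $(w_{\eps_j})_j$ becomes equi-integrable over $\Omega\setminus K_m'$, using that the total peeled mass is bounded by $M$ so that the residual concentration must drop to $0$. The delicate point, and where I expect the real difficulty, is to organize the subsequence and the peeling so that this residual concentration indeed vanishes — equivalently, to make the peeling iteration terminate in the limit; this is exactly the Brooks--Chacon argument (see also the short proof of Ball--Murat). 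An alternative, more structural route is to extract a weak-$*$ cluster point $\Lambda$ of $(w_{\eps_j})$ in $\Lrm^\infty(\Omega)^*$, split $\Lambda=w\,\Leb^d+\Lambda_s$ into its countably additive and purely finitely additive parts by the Yosida--Hewitt theorem, and take $K_m$ to be a decreasing sequence of sets of vanishing Lebesgue measure carrying $\Lambda_s$; such sets exist because a bounded purely finitely additive measure vanishing on $\Leb^d$-null sets of a finite measure space concentrates, for every $\eta>0$, on a set of measure $<\eta$.
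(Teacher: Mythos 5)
The paper itself states this lemma as classical background (Brooks--Chacon) and gives no proof, so the only question is whether your argument stands on its own. Your first two blocks do: the truncation construction of the biting limit is correct (the pointwise identity $|w_\eps^{k+1}-w_\eps^k|=\min((|w_\eps|-k)^+,1)$, the bound $\sum_k\min((t-k)^+,1)\le t$, Fatou for series plus weak lower semicontinuity of the norm, hence $(w^k)_k$ Cauchy in $\Lrm^1$), and the reduction is sound: once one has, for each $m$, a set $K_m'$ of measure $<2^{-m}$ off which the (sub)sequence is equi-integrable, assembling $K_m=\bigcup_{i\ge m}K_i'$ and identifying the weak limit through the uniform-in-$j$ truncation error is routine, and in fact your three-term estimate already gives weak convergence of the whole subsequence without invoking cluster points.

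The genuine gap is exactly where you flag it: the construction of the $K_m'$ \emph{is} the biting lemma, and you leave it at the level of a plan. The assertion that \enquote{the total peeled mass is bounded by $M$ so the residual concentration must drop to $0$} is not a proof as stated -- boundedness of the total mass does not by itself force the concentration function along the refined subsequence to vanish; the standard arguments need a genuine exhaustion step (e.g.\ at stage $i$ pass to a further subsequence and choose $E_i$ capturing all but $2^{-i}$ of the supremal concentrated mass, then show the non-increasing limits $c_i$ of the concentration functionals tend to $0$, which is the delicate point in Brooks--Chacon and in the Ball--Murat shortcut), and you explicitly defer this to the literature. The alternative route you sketch does not close the gap either: $\Lrm^\infty(\Omega)$ is not separable, so a bounded sequence in $\Lrm^\infty(\Omega)^*$ need not have a weak-$*$ convergent \emph{subsequence} (only a subnet), and after the Yosida--Hewitt splitting $\Lambda=w\,\Leb^d+\Lambda_s$ you still need both the concentration property of the purely finitely additive part (existence of carrying sets of arbitrarily small measure, which should itself be proved or precisely cited) and a way to convert subnet convergence back into statement (2) for a subsequence -- which in practice again runs through equi-integrability off the exceptional sets. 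So the write-up is a correct and well-organized reduction of the lemma to its core difficulty, but the core difficulty itself is asserted rather than proved.
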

	
	Given a sequence $(\mu_\eps)_\eps$ that generates a two-scale* Young measure $\bm{\nu} = (\nu,\lambda,\rho,\nu^\infty)$ it is possible to understand, qualitatively speaking, how each element of the four-tuple can be understood in terms of the generating sequence. In the following arguments we will assume with a slight abuse of notation that $(\mu_\eps = u_\eps)_\eps \subset \Lrm^1(\Omega;E)$.
	A simple consequence of the representation of two-scale* Young measures is that $\lambda$ and $\rho_x$ are supported (with respect to the $x$-variable) at non-biting points of the sequence that generates $\bm\nu$. Moreover, this two measures carry the mass of the sequence $(|\mu|_\eps)_\eps$ for different length-scales:
	
	\begin{enumerate}[(i)]
		\item \emph{The measure $\lambda$ quantifies the limit mass carried by $(|\Leb^d_\Omega u_\eps|)_\eps$ in the set where it fails to be equi-integrable.} 

		By Theorem 2.9 in~\cite{alibert1997non-uniform-int}, the biting limit of the sequence is the measure $\dprb{|\frarg|,\tilde\nu{\dd x}} \Leb^d_\Omega$ where $\upsilon = (\tilde\nu,\tilde \lambda,\tilde \nu^\infty) \in \Y^1(\Omega;E)$ is the Young measure generated by $(u_\eps)_\eps$. Hence, we conclude from the representation of two-scale*
		Young measures that
		\[
		|u_\eps \Leb^d_\Omega| - \dprb{|\frarg|,\tilde\nu{\dd x}} \Leb^d_\Omega \toweakstar \tilde \lambda \equiv \lambda \quad \text{as measures on $\cl\Omega$}.
		\]

		\item \emph{The probability measure $\rho_{x_0}$ quantifies, at a given non-biting point $x_0$ of $(|u_\eps \Leb^d_\Omega|)_\eps$, the homogenized  mass carried by the sequence about $x_0$ in the following sense: If $A \subset Z$, then 
			\[
			\int_U \chi_A\bigg(\frac{x}{\eps}\bigg)|u_\eps| \dd x \to \int_U \rho_x(A) \dd \lambda(x) \quad \text{for all Borel subsets $U \subset \cl \Omega$}.
			\]}
		The argument is a direct consequence of the representation of two-scale* Young measures and point (i). 
		%
	\end{enumerate}
	
	To give a better understanding of how the different components of the limiting Young measure are connected to the features of a limiting sequence, we present the following examples, that also emphasize the possibility of concentration for $\rho$ and $\lambda$. 
	\begin{figure}[h]
		\captionsetup{justification=raggedright}  
		\centering
		\begin{tikzpicture}
		\draw (1.5,1)--(1.5,0.95);
		\draw (-1.5,1)--(-1.5,0.95);
		\draw (-1.5,1)--(1.5,1);
		\node at (1.5,0.95)[anchor = north] {\scriptsize{$1$}};
		\node at (-1.5,0.95)[anchor = north] {\scriptsize{$-1$}};
		\draw (0,1)--(0,0.95);
		\node at (0,0.95) [anchor=north]{\scriptsize{$0$}};
		
		\draw[->] (0,1)--(0,3);
		
		\draw (0.5,1)--(0.5,0.95);
		\node at (0.5,0.95)[anchor=north]{\scriptsize{$\eps^\alpha$}};
		
		\draw (0,2.5)--(0.5,2.5)--(0.5,1);
		\node at (0,2.5)[anchor=east]{\scriptsize{$\eps^{-\alpha}$}};
		\fill[ gray, opacity =0.3] (0,1) rectangle (0.5,2.5);

		\node at (0,0.4){~};
		\end{tikzpicture}
		\hspace{0.8cm}
		\begin{tikzpicture}
		
		\draw (0,1)--(3,1);
		\draw (3,1)--(3,0.95);
		\node at (3,1)[anchor = north] {\scriptsize{$1$}};
		\draw (1.5,1)--(1.5,0.95);
		\node at (1.5,1)[anchor = north] {\scriptsize{$\frac 12$}};
		\draw (0,1)--(0,0.95);
		\node at (0,0.95) [anchor=north]{\scriptsize{$0$}};
		
		\draw[->] (0,1)--(0,3);
		
		\node at (0,2)[anchor=east]{\scriptsize{$\eps^{-1}$}};
		
		\node at (0.7,2)[anchor=south]{\scriptsize{$\eps^{2}$}};
		
		\draw (0,1)rectangle(0.1,2);
		\fill[ gray, opacity =0.3] (0,1)rectangle(0.1,2);
		\draw (0.3,1)rectangle(0.4,2);
		\fill[ gray, opacity =0.3] (0.3,1)rectangle(0.4,2);
		\draw (0.6,1)rectangle(0.7,2);
		\fill[ gray, opacity =0.3] (0.6,1)rectangle(0.7,2);
		\draw (0.9,1)rectangle(1,2);
		\fill[ gray, opacity =0.3] (0.9,1)rectangle(1,2);
		\draw (1.2,1)rectangle(1.3,2);
		\fill[ gray, opacity =0.3] (1.2,1)rectangle(1.3,2);

		\node at (0.75,1)[anchor=north]{\scriptsize{$\eps^{-1}$ many}};
		
		\node at (0,0.4){~};
		\end{tikzpicture}
		\hspace{0.8cm}
		\begin{tikzpicture}
		\draw (1.5,1)--(1.5,0.95);
		\draw (-1.5,1)--(-1.5,0.95);
		
		\draw (-1.5,1)--(1.5,1);
		\node at (1.5,1)[anchor = north] {\scriptsize{$1$}};
		\node at (-1.5,1)[anchor = north] {\scriptsize{$-1$}};
		\draw (0,1)--(0,0.95);
		\node at (0,0.95) [anchor=north]{\scriptsize{$0$}};
		
		\draw[->] (0,1)--(0,3);
		\draw (0.5,1)--(0.5,0.95);
		\node at (0.5,0.95)[anchor=north]{\scriptsize{$\frac 12 \eps$}};

		\draw (0.5,1)--(0.5,2.75)--(0.65,2.75)--(0.65,1);
		\node at (0,2.75)[anchor=east]{\scriptsize{$\eps^{-2}$}};
		\node at (0.595,2.75)[anchor=south]{\scriptsize{$\eps^{2}$}};
		\fill[ gray, opacity =0.3] (0.5,1) rectangle (0.65,2.75);
		
		\node at (0,0.4){~};

		\end{tikzpicture}
		
		\caption{Sketches for the functions in Examples~\ref{expl:Singularpart}(a),~\ref{expl:Singularpart}(b) and~\ref{expl:xi0isthere} respectively.} \label{fig:Expl}
	\end{figure}
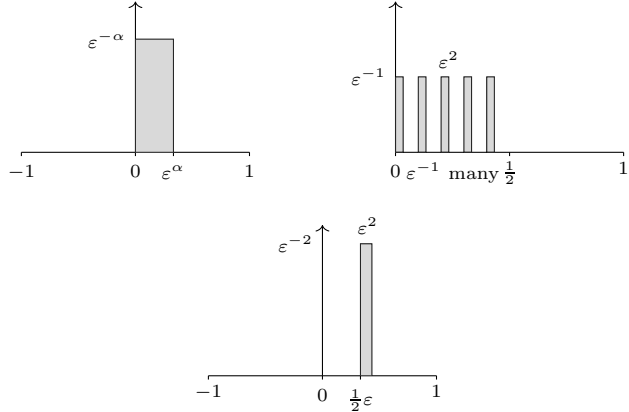
	
	For the sake of simplicity we will discuss examples in dimensions $d = 1$ and $E = \R$; their correspondent versions to higher dimensions are easily constructed by adding invariant directional measures or by mimicking similar constructions along transversal directions. We cover generic examples for each of the qualitative \emph{first-scale/second-scale} scenarios: 
	\begin{table}[h!]
		\centering
		\begin{tabular}{| c | c | c |} 
			\hline
			macro-scale ($x$) & micro-scale ($\xi$) & correlation   \\ [0.5ex] 
			\hline
			abs. cont. & abs. cont. & $\mu_\eps$ equi-integrable   \\ 
			\cline{2-3}
			& concentration/dissipation & scale of phenomena $\gg \eps$  \\
			\hline
			concentration/dissipation & abs. cont. & scale of phenomena $\le \eps$   \\
			\cline{2-3}
			& concentration/dissipation & scale of phenomena $\gg \eps$   \\   [1ex] 
			\hline
		\end{tabular}
		\caption{Qualitative properties of the elements of a two-scale* Young measure.}
		\label{table:1}
	\end{table}

	\begin{example}[singularity on the macro-scale]\label{expl:Singularpart} 
		Fix $\alpha > 0$ and let $\Omega = (-1,1)$. We consider the family of functions
		
		\[
		u_{\varepsilon}(x) =  \varepsilon^{-\alpha} \chi_{(0,\varepsilon^\alpha)}(x), \quad \eps > 0.
		\]
		%
		%
		%
		%
		%
		%
		Let us assume, up to taking a sequence of $\eps$'s that $u_\eps \toYY \bm{\nu} = (\nu,\lambda,\rho,\nu^\infty)$ (in this case every subsequence generates the same Young measure). The following observations are easy to check:
		\begin{enumerate}
			\item \emph{Pure concentration in the $x$-variable.} On the one hand $|u_\varepsilon| \Leb^1 \toweakstar \delta_0$. On the other hand, using $f(x,y,z)= |z|$ as a test function the Young measure representation yields 
			\[
			|u_{\varepsilon_k}|  \Leb_\Omega^d \toweakstar \bigg(\int_{Z}\langle| \frarg | ,\nu_{x,\xi} \rangle \dd\xi\bigg) \Leb_\Omega^d + \lambda\,,
			\]
			whence we deduce $\nu_{x,\xi} = \delta_0$ for $\Leb^1 \otimes \Leb^1$-almost every $(x,\xi)$ in $[-1,1] \times Z$ and $\lambda \equiv \delta_0$.  Thus, it suffices to characterize $\bm\nu$ at $x = 0$.
			\item Testing with an integrand of the form $f(x,\xi,z) = \phi(x)g(\xi)|z|$ we see through a change of variables that
			\begin{align}
			\int_{-1}^{1} \phi(x) g(x/\varepsilon)h(u_\varepsilon(x))  \dd x
			& = \int_{0}^{\eps^\alpha} \varepsilon^{-\alpha}\phi(x)g(x/\varepsilon)h(1)\dd x	 \nonumber\\
			& = \int_0^{1} \phi(\varepsilon^\alpha y )g(\varepsilon^{\alpha-1} y)h(1) \dd y  \label{eq:explsingulartorus}  \rightarrow c(\alpha)\,.
			\end{align}
			\begin{enumerate}
				\item \emph{Pure singularity on the micro-scale:} if $\alpha > 1$,  the Young measure representation and the limit above give 
				\[
				\int_Z g(\xi)\dpr{h,\nu_{0,\xi}^\infty} \dd \rho_0(\xi)  = c(\alpha) =  g(0)h(1) \quad \Rightarrow.
				\]
				In conclusion $\rho_0 = \delta_0$, $\nu_{0,0}^\infty = \delta_1$, and
				\[
				\bm{\nu} = (\delta_0,\delta_0,\delta_0,\delta_1)\,.
				\]
				\item \emph{Absolute continuity in the second-scale:} if, in turn $\alpha \le 1$, we get 
				\[
				\int_Z g(\xi)\dpr{h,\nu_{0,\xi}^\infty} \dd \rho_0(\xi) = c(\alpha) = \phi(0)h(1)\int_Z g(\xi) \dd \xi.
				\]
				Therefore $\rho_0 = \Leb^1_Z$ and $\nu_{0,0}^\infty = \delta_1$, which altogether yields
				\[
				\bm{\nu} =  (\delta_0,\delta_0,\Leb^1_Z,\delta_1)\,.
				\]
				
			\end{enumerate}
		\end{enumerate}
	\end{example}
	\begin{remark}
		Observe that when oscillations of the sequence $(u_\varepsilon)_\eps$ happens at a coarser length-scale than $\{\eps\}$, then the the two-scale Young measure does not provide more information than the classical Young measure. On the other hand, if oscillations of $(u_\varepsilon)_\eps$  takes part at a finer scale than $\{\varepsilon\}$, then the two-scale* Young measure cannot be recovered from the classical Young measure.
	\end{remark}


	\begin{example}[non-biting limit] Fakir's construction (also known as \emph{Fakir's carpet}) provides a good way to produce continuous concentrations. The idea is to create \emph{many} small concentrations which \emph{diffuse} before each of them can actually gain mass. 
		
		Let $\Omega = (0,1)$. In order to showcase the sensitivity of Young measures we shall consider a \enquote{bi-directional half-carpet} which is generated by the functions
		\[
		u_{\frac 1k}(x) \coloneqq k \sum_{i=0}^{k/2} (-1)^i\chi_{[\frac ik, \frac ik + \frac 1{k^2}]}(x), \quad  k = 2,4,6,\dots
		\]
		Similarly to the example above, let us already assume that $u_{\frac 1k} \toYY \bm\nu$. The following observations can be deduced directly from the representation of two-scale* Young measures:
		
		\begin{enumerate}
			\item \emph{First-scale analysis:} $|u_\varepsilon|\, \Leb^1_\Omega\toweakstar \Leb^1\restrict [0,1/2]$. There exist no singular points in the first variable which is encoded by the equality of measures $\lambda^s \equiv 0$. However, every point in the interval $[0,1/2]$ is a non-biting point of the sequence $(u_{\eps_k})_k$ (clearly the sequence fails to  be equi-integrable at any subset of this interval) and hence
			\[
			\lambda \equiv \Leb^1\restrict [0,1/2] \quad \text{as measures on $[0,1]$}\,.
			\]
			Moreover,   
			\[
			x \mapsto \int_Z \dpr{\nu_{x,\xi},|\frarg|} \dd \xi \equiv 0 \quad \Rightarrow \quad \nu_{x,\xi} = \delta_0 \quad \text{$\Leb^1_\Omega \times \Leb^1_Z$-almost everywhere}.
			\]

			
			\item \emph{Second-scale analysis:} let $f = \phi \otimes g \otimes h \in \Rbf^2(\Omega;\R)$. The limit of the energies $I^{\eps_k}_f(u_{\eps_k})$ as $k \to \infty$ can be computed by Riemann-integral partial sums as 
			\begin{align*}
			\int_{0}^{1} \phi(x) g(kx)h(u_{\frac 1k}(x))  \dd x
			& = k \sum_{i=0}^{k/2} \int_{\frac i k}^{\frac i k + \frac 1{k^2}}\phi(x) g(kx)h((-1)^i) \dd x\\
			& =  \sum_{i=0}^{k/2} \int_{i}^{i + \frac 1{k}}\phi(y/k) g(y)h((-1)^i) \dd y \\ 
			& \sim {g(k^{-1})} \sum_{i=0}^{k/2}\phi(i/k) h((-1)^i) \dd y \\ 
			& \rightarrow  g(0)\bigg(\frac{h(1)}{2}\int_0^\frac{1}{2} \phi(x) \dd x + \frac{h(-1)}{2}\int_0^\frac{1}{2} \phi(x) \dd x\bigg)\,.
			\end{align*}
			The representation of two-scale* Young measures and a density argument then give
			\[
			\bm{\nu} = (\delta_0,\Leb^1 \restrict [0,1/2],\delta_0,\frac 12 \delta_{-1} + \frac 12 \delta_1)\,.
			\]	 
			
			\begin{remark}
				As it can be seen from the representation of Young measures, at biting-points $x \in [0,1]$ of the sequence $\{u_\eps\}$, the correspondent probability measures $\rho_x$ must be the uniform measure $\Leb^1\restrict(0,1]$.
			\end{remark}
			%
			%
		\end{enumerate}
		

	\end{example}

	The precise representation of a Young measure does not only depend on  the generating sequence $u_\eps$ but is also strongly influenced by the speed of oscillation $\eps$. 
	This is an interesting and important feature of the compactness, that will also play a role in the localization principles below, and is emphasized in the following example.
	\begin{example}[non-uniqueness] \label{expl:xi0isthere}
		Let again $d=N=1$ and $\Omega = (0,1)$. For a fixed $\eps > 0$ and arbitrary $c,d \in (0,1]$, we consider the function 
		\[
		u_\eps (x) \coloneqq \frac 1{\eps^{2}} \chi_{(a + b\eps, a + b \eps +\eps^2)}(x).
		\]
		\emph{First-scale:} If $\eps \searrow 0$, then for every subsequence $\eps_1\searrow 0$ and
		\begin{equation}\label{eq:non}
		u_{\eps_1} \toY \upsilon = (\delta_0,\delta_a,\delta_1). 
		\end{equation}
		
		\noindent\emph{Second-scale:} We may assume that $\area{\frac a{\eps_1}} \to \xi_1 \in Z$. Here, we recall that $\area{x} \in Z$ stands for the equivalence class of $x \in \R$ in the one-dimensional torus $Z$. Testing with an integrand $f = \phi \otimes g \otimes h$ gives
		
		\begin{align*}
		\int_\Omega f(x,x/{\eps_1},u_{\eps_1}) \dd x
		& =  \int_{a + b {\eps_1}}^{a + b  \eps_1 +\eps_1^2} \phi(x)g(x/\eps_1)h(\eps_1^{-2}) \dd x \\
		& = \eps h(\eps_1^{-2})\int_{\frac{a}{\eps_1} + b}^{\frac{a}{\eps_1} +b + \eps_1} \phi(\eps_1 y)g(y) \dd y\\
		& \sim  \phi(a)g\big(\area{{a}/{\eps_1}} + b + \eps_1\big) \cdot \eps_1^2 h(\eps_1^{-2})\\
		& \rightarrow \phi(a)g(\xi_1 + b)h^\infty(1).
		\end{align*}
		From this we infer that $u_{\eps_1} \toYY \bm{\nu}_1$, where
		\[
		\bm{\nu}_1 =  (\delta_0,\delta_a,\rho^1 = \delta_{\xi_1 + b},\delta_1).
		\]	 
		%
		Notice that $\xi_1$ does not depend on $\{u_{\eps_1}\}$, but solely on subsequence $\{\eps_1\}$. 
		Hence, the choice of a different subsequences generates a range of two-scale* Young measures  (compare this with the uniqueness of \eqref{eq:non}). Indeed, if $\{\eps_2\}$ is another subsequence satisfying
		\[
		\xi_2 \coloneqq \lim_{\eps_2 \todown 0} \area{\frac a {\eps_2}} \neq \xi_1,
		\]
		then $u_{\eps_2} \toYY \bm{\nu}_2 = (\delta_0,\delta_a,\rho^2 = \delta_{\xi_2 + b},\delta_1)$.
		However, passing to a subsequence does not entirely \emph{forget}  in the sense that it is possible to relate $\bm\nu_1$ and $\bm \nu_2$ by a translation in the torus:
		\[
		\Gamma^{\xi_1}_\# \rho^1 \equiv \Gamma_\#^{\xi_2} \rho^2 \equiv \delta_b\,,
		\]
		where $\Gamma^\eta : \xi \mapsto \xi - \eta$ is a translation map in  $\R^d$ (which also determines a translation in the $d$-dimensional torus).
		In fact this translation in the second-scale also occurs at biting points of the sequence. However, this is not reflected in the two-scale* Young measure since $\Leb^d_Z$ is an invariant measure under translation, that is, 
		\[
		\Gamma^\xi_\# \Leb^d_Z \equiv \Leb^d_Z\quad \text{for all $\xi \in Z$}.
		\]

	\end{example}%

	\section{Localization principles}\label{sec:5}
	
	In this section we treat the (measure theoretic) \emph{differentiation} of Young measures which confirms the observation that the convergence
	\[
	\mu_\eps \toYY \bm\nu \in \Y^2(\Omega;E)
	\]
	is in fact local (with respect to the macroscopic variable $x$).
	We show that at a point $x_0 \in \Omega$, the information carried by $\bm\nu$ can be recovered by simply looking at the homogeneous Young measures $\bm\sigma \in \Y^2(Q;E)$ generated by blow-ups of the generating sequence $(\mu_\eps)_\eps$ at $x_0$. With the mathematical thrust of introducing Young measures as a serving tool, we establish localization principles at both continuity and singular points.
	Next, we recall some facts about the push-forward of blow-up maps on measures. 
	
	
	Throughout this section we will indistinctly use the zero-extension map $\M(\Omega;E) \embed \M_\loc(\R^d;E)$ to identify measures defined on $\Omega \subset \R^d$ with measures defined on the whole space $\R^d$. 
	Fix $\mu \in \Mcal(\Omega;\R^N)$ and consider the map $T^{(x_0,r)}(x) := (x - x_0)/r$, 
	which blows-up $B_r(x_0)$, the open ball around $x_0 \in \Omega$ with radius $r > 0$, into the open unit ball $B_1 \subset \R^d$. Following the definition of push-forward, it is easy to check that
	\[
	T^{(x_0,r)}_\# \mu(B) := \mu(x_0+rB ) \quad \text{whenever $B \subset \frac 1r(\Omega-x_0)$ is a Borel set.}
	\]
	A simple calculation shows that the Radon--Nykodym decomposition of $T^{(x_0,r)}_\#\mu$ can be re-written in terms of $\mu$ as 
	\begin{equation}\label{eq:blow_r}
	T^{(x_0,r)}_\#\mu = a \, \Leb^d + s \, T_\#^{(x_0,r)} |\mu^s|,  
	\end{equation}
	where the densities $a \in \Lrm^1_\loc(\R^d;\R^N)$ and $s \in \Lrm^\infty_{T_\#^{(x_0,r)} |\mu^s|}(\R^d;\Sbb^{N-1})$ are defined by the rules 
	\begin{align*}
	a(y) & \coloneqq r^d\frac{\dd \mu}{\dd \Leb^d}(x_0 + ry), \\
	s(y)& \coloneqq \frac {\dd\mu}{\dd |\mu^s|} (x_0+ry).
	\end{align*}
	

	
	Retaking the notation of the last section, we write $\Gamma^{\xi_0}:\xi \mapsto \xi - \xi_0$ to denote the translation in $\R^d$ by a vector $\xi_0 \in \R^d$. To avoid a more intricate 
	notation, we shall also write $\Gamma^{\xi_0}$ (with $\xi_0 \in [0,1)^d$) to denote the same translation when restricted to $Z$, that is, $\Gamma^{\xi_0} : Z \to Z : {\xi} \to \area{\xi - \xi_0}$.  
	In this way, the push-forward action $\Gamma_\#^{\xi_0} : \M(Z) \to \M(Z)$ defines an automorphism of spaces. 
	In particular, if $g\in L^1_\rho(Z)$, then 
	\begin{align}\label{eq:translationinvariance}
	\int_{Z} g(\xi - \xi_0)   \dd \rho(\xi)   =  \int_{Z}  g(\xi)   \dd (\Gamma^{\xi_0}_{\#}\rho)(\xi).
	\end{align}
	Notice that if $\rho$ is a uniform measure in $Z$, then $\rho$ is translation invariant:
	\[
	\Gamma^{\xi_0}_{\#}\rho \equiv \rho\quad \text{for all $\xi_0 \in Z$)}.  
	\]
	
	To avoid any possible confusion we shall write $y \in Q$ (or $\R^d$) to denote the blow-up variable; we keep the notation $x \in \Omega$ for the macroscopic scale.
	\subsection{Localization at regular points}
	
	\begin{lemma}\label{lem:locatregular}
		Let $\bm{\nu} = (\nu,\lambda,\rho,\nu^\infty) \in \Y^2(\Omega;E)$ be a two-scale* Young measure generated by a sequence  $(\mu_\eps)_\eps \subset \M(\Omega;E)$. 
		Then, at $\Leb^{d}$-almost every $x_0\in \Omega$ there exists  (up to a translation in $Z$) a regular tangent two-scale* Young measure ${\bm{D\nu}}$ $= (D\nu,D\lambda,D\rho,D\nu^\infty) \in \Y^2(Q;E)$ of $\bm\nu$ at $x_0$. That is, there exists vector $\xi_0 = \xi(x_0)\in Z$ such that
		\begin{align}
		D\lambda & = \lambda^{ac}(x_{0}) \, \Leb^d_Q \in \Tan_1(\lambda,x_0),\\
		D\rho_y & = \Gamma^{\xi_0}_{\#}\rho_{x_0} \quad \text{for $D \lambda$-almost every $y\in \R^d$}.
		\end{align}
		Moreover,  $\{D\nu_{y,\xi}\},\{D\nu^\infty_{y,\xi}\}$ are homogeneous Young measures in the sense that
		\begin{align}
		D\nu_{y,\xi} & =  \nu_{x_0,\xi + \xi_0}\quad \text{for $(\Leb^d \otimes \Leb^d_Z)$-almost every $(y,\xi) \in \R^d \times Z$},\\
		D\nu^\infty_{y,\xi} & = \nu^\infty_{x_0,\xi + \xi_0}\quad \text{ for $(D\lambda \otimes D\rho_y)$-almost every $(y,\xi) \in \R^d \times Z$}.
		\end{align}
	\end{lemma}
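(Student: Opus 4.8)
The plan is to obtain $\bm{D\nu}$ as the two-scale* Young measure generated by a rescaled blow-up of the generating sequence $(\mu_\eps)_\eps$ at $x_0$, where the blow-up radius $r$ and the oscillation parameter $\eps$ are coupled diagonally so that the induced finer scale $\delta=\eps/r$ still tends to $0$; the torus translation $\xi_0$ will appear as a subsequential limit of $\area{x_0/\eps}$ (its presence being genuinely necessary, cf.\ Example~\ref{expl:xi0isthere}). First I would fix a countable family $\{\phi_m\otimes g_m\otimes h_m\}$ separating $\Y^2(Q;E)$ as in Lemma~\ref{lem:density}, chosen with $\supp\phi_m\Subset\interior Q$, together with a countable dense set $\{\zeta_l\}\subset Z$, and select $x_0$ to be a \emph{good} point: a point at which $\lambda^{ac}(x_0)$ exists and $r^{-d}\lambda^s(Q_r(x_0))\to0$, at which $\int_Z\dprb{|\frarg|,\nu_{x_0,\xi}}\dd\xi<\infty$, and which is a Lebesgue point (with respect to $\Leb^d$) of each of the $\Lrm^1(\Omega)$-functions $x\mapsto\int_Z\dprb{h_m,\nu_{x,\xi}}g_m(\frarg-\zeta_l)\dd\xi$ and $x\mapsto\lambda^{ac}(x)\int_Z\dprb{h_m^\infty,\nu^\infty_{x,\xi}}g_m(\frarg-\zeta_l)\dd\rho_x(\xi)$, for all $m,l$; all these conditions hold $\Leb^d$-almost everywhere by Besicovitch differentiation. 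Passing to a subsequence of $\eps\searrow0$ (not relabelled) along which $\area{x_0/\eps}\to\xi_0$, and choosing $\zeta_l\to\xi_0$, the uniform continuity of the $g_m$ upgrades the Lebesgue-point property to the $\xi_0$-shifted functions $P_m(x):=\int_Z\dprb{h_m,\nu_{x,\xi}}g_m(\xi-\xi_0)\dd\xi$ and $x\mapsto\lambda^{ac}(x)R_m(x)$, where $R_m(x):=\int_Z\dprb{h_m^\infty,\nu^\infty_{x,\xi}}g_m(\xi-\xi_0)\dd\rho_x(\xi)$.

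Next I would establish an iterated limit. For small $r>0$ put $\phi_m^{r}(x):=\phi_m(\tfrac{x-x_0}{r})$ and $\hat g_m:=g_m(\frarg-\xi_0)$, so $\phi_m^{r}\otimes\hat g_m\otimes h_m\in\E^2(\Omega;E)$ and, since $\mu_\eps\toYY\bm\nu$, one has $I^\eps_{\phi_m^{r}\otimes\hat g_m\otimes h_m}(\mu_\eps)\to\ddprb{\phi_m^{r}\otimes\hat g_m\otimes h_m,\bm\nu}$ as $\eps\to0$. Unwinding the right-hand side, multiplying by $r^{-d}$ and changing variables $x=x_0+ry$, and using the Lebesgue-point properties of $x_0$ (the $\lambda^s$-term drops out because $r^{-d}\lambda^s(Q_r(x_0))\to0$), one gets $r^{-d}\ddprb{\phi_m^{r}\otimes\hat g_m\otimes h_m,\bm\nu}\to\big(P_m(x_0)+\lambda^{ac}(x_0)R_m(x_0)\big)\int_Q\phi_m(y)\dd y$ as $r\to0$. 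By the translation invariance of $\Leb^d_Z$ and the push-forward identity~\eqref{eq:translationinvariance}, this limit equals exactly $\ddprb{\phi_m\otimes g_m\otimes h_m,\bm{D\nu}}$, where $\bm{D\nu}=(D\nu,D\lambda,D\rho,D\nu^\infty)$ is defined by the formulae in the statement; that $\bm{D\nu}\in\Y^2(Q;E)$ is then a routine check (using $\int_Z\dprb{|\frarg|,\nu_{x_0,\xi}}\dd\xi<\infty$ for the integrability of $(y,\xi)\mapsto\dprb{|\frarg|,D\nu_{y,\xi}}$), with $D\lambda=\lambda^{ac}(x_0)\Leb^d_Q$ the relevant (normalised) element of $\Tan_1(\lambda,x_0)$.

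Then I would diagonalise. Fix $r_j\searrow0$ with $\cl{Q_{r_j}(x_0)}\subset\Omega$. Using the iterated limit together with the convergence $I^\eps_{\bullet}(\mu_\eps)\to\ddprb{\bullet,\bm\nu}$ applied to the finitely many fixed integrands $\phi_m^{r_j}\otimes\hat g_m\otimes h_m$ ($m\le j$) and to $\eta(\tfrac{\frarg-x_0}{r_j})\otimes 1\otimes|\frarg|$ for a fixed continuous $\eta$ with $\chi_Q\le\eta$ and $\supp\eta\Subset 2Q$, choose $\eps_j$ from the subsequence so small that $\eps_j/r_j<1/j$, $|\area{x_0/\eps_j}-\xi_0|<1/j$, $r_j^{-d}|\mu_{\eps_j}|(Q_{r_j}(x_0))\le C$ for a constant $C$ independent of $j$, and $\big|r_j^{-d}I^{\eps_j}_{\phi_m^{r_j}\otimes\hat g_m\otimes h_m}(\mu_{\eps_j})-\ddprb{\phi_m\otimes g_m\otimes h_m,\bm{D\nu}}\big|<1/j$ for all $m\le j$. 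Set $\delta_j:=\eps_j/r_j\searrow0$ and $\tau_j:=r_j^{-d}\,T^{(x_0,r_j)}_\#(\mu_{\eps_j}\restrict Q_{r_j}(x_0))\in\M(\cl Q;E)$; by the scaling formula~\eqref{eq:blow_r}, under the $r_j^{-d}$-normalisation one has $\tau_j^{ac}(y)=\mu^{ac}_{\eps_j}(x_0+r_jy)$ and $(\tau_j)_s(y)=(\mu_{\eps_j})_s(x_0+r_jy)$. The change of variables $x=x_0+r_jy$, using $\area{(x-x_0)/\eps_j}=\area{x/\eps_j}-\area{x_0/\eps_j}$ in $Z$, gives $I^{\delta_j}_{\phi_m\otimes g_m\otimes h_m}(\tau_j)=r_j^{-d}I^{\eps_j}_{\phi_m^{r_j}\otimes g_m(\frarg-\area{x_0/\eps_j})\otimes h_m}(\mu_{\eps_j})$, and replacing $\area{x_0/\eps_j}$ by $\xi_0$ introduces an error bounded by $\|g_m(\frarg-\area{x_0/\eps_j})-g_m(\frarg-\xi_0)\|_\infty\,\|h_m\|_{\E^0(E)}\,r_j^{-d}|\mu_{\eps_j}|(Q_{r_j}(x_0))\le C\,\|g_m(\frarg-\area{x_0/\eps_j})-g_m(\frarg-\xi_0)\|_\infty\to0$. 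Combining with the diagonal estimate yields $I^{\delta_j}_{\phi_m\otimes g_m\otimes h_m}(\tau_j)\to\ddprb{\phi_m\otimes g_m\otimes h_m,\bm{D\nu}}$ for every $m$, whence $\tau_j\toYY\bm{D\nu}$ because $\{\phi_m\otimes g_m\otimes h_m\}$ separates $\Y^2(Q;E)$. Thus $\bm{D\nu}$ is a regular tangent two-scale* Young measure of $\bm\nu$ at $x_0$ with the asserted structure.

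I expect the diagonal coupling of the last step to be the main obstacle: one must choose $\eps_j\ll r_j$ fast enough to make all of the finitely many ``approximation-at-scale-$r_j$'' inequalities hold while simultaneously keeping $\eps_j/r_j\to0$ and $\area{x_0/\eps_j}\to\xi_0$, and one must control $r_j^{-d}|\mu_{\eps_j}|(Q_{r_j}(x_0))$ uniformly so that replacing $\area{x_0/\eps_j}$ by $\xi_0$ in the second-scale argument of the test integrands is harmless. The emergence (and necessity) of the torus translation $\xi_0=\lim\area{x_0/\eps_j}$ is precisely the new phenomenon that distinguishes this localization principle from its single-scale predecessor.
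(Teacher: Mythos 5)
Your proposal follows essentially the paper's own route: the same selection of Lebesgue points with respect to a countable separating family composed with a countable dense set of torus translations, the same extraction of $\xi_0=\lim\area{x_0/\eps}$ along a subsequence, the same blow-up with readjusted micro-scale $\delta=\eps/r$, the same control of the error made when replacing $\area{x_0/\eps}$ by $\xi_0$ via a uniform bound on $r^{-d}|\mu_{\eps}|(Q_{r}(x_0))$, and the same identification of the limit through the translation invariance $\Gamma^{\xi_0}_\#\Leb^d_Z=\Leb^d_Z$ and the push-forward identity. The only structural difference is order of limits: the paper first generates intermediate measures $\bm{\sigma^{(j)}}$ (limit in $i$ at fixed $r_j$) and then lets $j\to\infty$ using weak-$*$ compactness in $\E^2(Q;E)^*$, whereas you compute the iterated limit directly on $\bm\nu$ and then diagonalise explicitly; your version has the merit of exhibiting a single blow-up sequence generating $\bm{D\nu}$, which the paper leaves implicit.

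There is, however, one genuine gap in your final step. You insist that $\supp\phi_m\Subset\interior Q$, but then the family $\{\phi_m\otimes g_m\otimes h_m\}$ does \emph{not} separate $\Y^2(Q;E)$: two-scale* Young measures on $Q$ are objects on the closed cube (the component $\lambda$ may charge $\partial Q$), and integrands vanishing near $\partial Q$ cannot distinguish two such measures differing only there. Hence from $I^{\delta_j}_{f_m}(\tau_j)\to\ddprb{f_m,\bm{D\nu}}$ for your restricted family, together with the uniform mass bound (and an explicit appeal to Proposition~\ref{cor:Compactnessofyoungmeasures} to pass from the countable family to subsequential Young-measure limits, which you should spell out), you only conclude that every subsequential limit of $(\tau_j)$ agrees with $\bm{D\nu}$ against integrands supported in the open cube; a limit could still carry $\lambda$-mass on $\partial Q$, so $\tau_j\toYY\bm{D\nu}$ does not yet follow. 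The fix is either to work, as the paper does, with $\phi_m\in\Crm(\cl Q)$ (handling the change of variables as a restriction to $Q_{r_j}(x_0)$ rather than an extension by zero), or to add to your diagonal scheme finitely many tests of the blow-ups against the mass integrand $[\frarg]_E$ with cut-offs concentrated near $\partial Q$: the Lebesgue-point conditions at $x_0$ give $\limsup_j\big(|\tau_j|+\Leb^d\big)\big(\cl Q\setminus(1-t)Q\big)=\BigO(t)$, so no mass accumulates on the boundary and the identification extends to all of $\E^2(Q;E)$.
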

	
	%
	\begin{proof} 
		Let $\{f_m \coloneqq \phi_m \otimes g_m \otimes h_m \,|\,m\in \N\} \subset \E^2(Q;E)$ be the restriction to $\cl Q \times Z \times E$ of the dense subset provided by Lemma~\ref{lem:density}; without loss of generality assume that $g_1 \equiv \chi_Z \in \Crm(Z)$. Let also $\{\xi_k\}_{k \in \Nbb}$ be a countable dense subset of $Z$.

		\emph{Step~1. Selection of regular points.}
		%
		%
		First, let us define a set of full $\Leb^d$-measure where we aim to show the assertions of the lemma. To do this we first list three Lebesgue-type properties which are satisfied for $\Leb^d$-almost every $\tilde x \in \Omega$: \\
		\textit{Lebesgue property 1.} Lebesgue points of the measure $\lambda$, that is,
		\begin{equation}\label{eq:x_01}
		\lim_{r \todown 0} \frac{\lambda(Q_r(\tilde x))}{r^d} = \lambda^{ac}(x_0), \quad \lim_{r \todown 0} \frac{\lambda^s(Q_r(\tilde x))}{r^d} = 0.
		\end{equation}
		\textit{Lebesgue property 2.} Lebesgue points of the map  
		\begin{align}\label{eq:x_03}
		x \mapsto \int_Z  \langle |\frarg|_E, \nu_{x,\xi} \rangle \dd \xi.
		\end{align}
		\textit{Lebesgue property 3.} Lebesgue points of the family of weighted barycenter measures  $\{\bpr{f_{k,m}, \bm\nu}\}_{k,m \in \Nbb}$ where $f_{k,m} \coloneqq \phi_m \otimes g_m \circ \Gamma^{\xi_k}\otimes h_m$. Recall from~\eqref{eq:density3} that {being a Lebesque point for all elements of the family} is equivalent to 
		\begin{align}\label{eq:x_02}
		\frac{\dd [f_{k,m},\bm\nu]}{\dd \Leb^d}(\tilde x) = \bbpr{f_{k,m},\bm\nu}_{\tilde x}(Z) 
		\quad \text{for all $k,m \in \Nbb$}.
		\end{align}
		We shall show the conclusions of the lemma hold for all 
		\[
		x_0 \in R \coloneqq \set{\tilde x \in \Omega}{\text{$\tilde x$ satisfies \eqref{eq:x_01}-\eqref{eq:x_02}}},
		\]
		which is a set of full $\Leb^d$-measure in $\Omega$. 

		\emph{Step~2. Blow-up sequence.} As before, we write $\langle z \rangle$ to denote the equivalence class of a vector $z \in \R^d$ in the $d$-dimensional torus $Z$. Since $Z$ is a compact manifold we may assume (up to passing to a  subsequence $(\mu_{\eps_i})_{i \in \Nbb}$) that 
		\begin{equation}\label{eq:limitpoint}
		\Big\langle \frac{x_0}{\eps_i} \Big \rangle \to  \xi_0 \in Z \quad \text{as $i \to \infty$}.
		\end{equation}
		Let 
		$r_j \todown 0$ (with $r_1 = 1$) be an infinitesimal sequence of radii and consider, for fixed $j \in \Nbb$, 
		the blow-up sequence
		\[
		\gamma_{\delta_{i,j}} \coloneqq \frac 1{r_{j}^{d}}T_\#^{(x_0,r_{j})} \mu_{\eps_{i}} \in \M(Q;E), \quad i \in \Nbb,
		\]
		where  
		\[
		\delta_{i,j} \coloneqq \frac{\eps_{i}}{r_{j}},\quad i \in \Nbb,
		\]
		is the readjusted blow-up length-scale sequence (this conforms again an infinitesimal sequence).
		Since $(\mu_{\eps_i})_{i \in \Nbb}$ is uniformly bounded in $\M(\Omega;\R^N)$, we also have
		\[
		\sup_{i \in \Nbb}\gamma_{\delta_{i,j}}(Q) 
		< \infty
		\quad \text{for each $j \in \Nbb$.} 
		\]
		For $j = 1$, we use the compactness result in Theorem~\ref{thm:representation} to find a subsequence $\{1(i)\} \subset \{i\}$ 
		and $\bm{\sigma}^{(1)} \in \Y^2(Q;E)$ 
		such that 
		\[
		\gamma_{\delta_{1(i),1}} \toYY \bm{\sigma^{(1)}} \quad \text{on $\cl Q$ \quad as \quad $i \rightarrow \infty$.}
		\]
		Recursively, for each $2 \le j \in \Nbb$, we may find a sequence $\{j(i)\}\subset \{(j-1)(i)\}$ and a two-scale* Young measure $\bm{\sigma^{({j})}}\in \Y^2(Q;E)$ such that
		\[
		\gamma_{\delta_{j(i),j}} \toYY \bm{\sigma^{({j})}} \qquad \text{ as \quad $i \to \infty$.}
		\]
		
		
		\emph{Step 3. Characterization of $\bm{\sigma^{({j})}}$.}
		In this step we fix $j \in \Nbb$. 
		Let $f = \phi \otimes g \otimes h \in \E^2(Q;\R^N)$ with $g \in \Crm^1(Z)$ and $h$ uniformly Lipschitz. 
		A change of variables and the decomposition in~\eqref{eq:blow_r} yields 
		\begin{equation}\label{eq:loctest1}
		\begin{split}
		\ddprb{f, \bm{\sigma^{(j)}}} & = \lim_{i \to \infty}\bigg(
		\int_Q \phi(y)\,g(\,y/\delta_{j(i),j}\,)\,h(\,\mu_{\eps_{j(i)}}^{ac}(x_0 + r_jy)\,) \dd y \\
		&+ \int_{\cl Q}  \phi(y)g\bigg(\frac{y}{\delta_{j(i),j}}\bigg)h^\infty\bigg(\frac{\dd \mu_{\eps_{j(i)}}}{\dd |\mu^s_{\eps_{j(i)}}|}(x_0 + r_jy)\,\bigg) \dd \big( r_j^{-d}
		T_\#^{(x_0,r_j)}|\mu^s_{\eps_{j(i)}}|)(y)\bigg) \\
		& =  \frac 1{r_j^{d}}\lim_{i \to \infty} \bigg(\int_{Q_{r_j}(x_0)} \big(\varphi\circ T^{(x_0,r_j)}\big)(x)  \, g\bigg(\frac{x - x_0}{\eps_{j(i)}}\bigg)h(\,{\dd \mu^{ac}_{\eps_{j(i)}}}(x)\,) \dd x \\
		&+  \int_{\cl{Q_{r_j}(x_0)}}  \big(\varphi\circ T^{(x_0,r_j)}\big)(x)  \,g\bigg(\frac{x - x_0}{\eps_{j(i)}}\bigg)h^\infty\bigg(\frac{\dd \mu_{\eps_{j(i)}}}{\dd |\mu^s_{\eps_{j(i)}}|}(x)\bigg) \dd|\mu^s_{\eps_{j(i))}} |(x)\bigg).
		\end{split}
		\end{equation} 
		Setting $C \coloneqq \limsup_{\eps \todown 0}|\mu_{\eps}|(\overline{Q}) < \infty$ (from the original sequence) we may estimate the limiting behavior of the difference 
		\begin{align*}
		\bigg{|} \int_{\cl{Q_{r_j}(x_0)}} \phi \bigg(\frac{x-x_0} {r_j}\bigg)\bigg[g\bigg( \frac{x- x_0}{\varepsilon_{j(i)}}\bigg) - g\bigg(\frac{x}{\varepsilon_{j(i)}} - \xi_0\bigg)\bigg] \dd |\mu_j|(x) \bigg|
		\end{align*}
		by
		\begin{align} \label{eq:xi0appears}
		C \|\phi\|_\infty \cdot \|Dg\|_\infty
		\cdot \bigg|\dprBB{\frac{x_0}{\eps_{{j(i)}}}} - \xi_0\bigg| = \BigO(i) \to 0 \quad \text{as $i \to \infty$},
		\end{align}
		where to see that the last term vanishes as $i \to \infty$ we have used~\eqref{eq:limitpoint}.
		
		Using this  we may re-write~\eqref{eq:loctest1} as 
		\begin{equation}\label{eq:loc2}
		\begin{split}
		\ddprb{f , \bm{\sigma^{(j)}}} & = \frac 1{r_j^{d}} \lim_{i \to \infty} \bigg(\int_{Q_{r_j}(x_0)} (\varphi \circ T^{(x_0,r_j)})(x) \, (g \circ \Gamma^{\xi_0}))\bigg(\frac{x}{\eps_{j(i)}}\bigg) \, h(\,{\dd \mu^{ac}_{\eps_{j(i)}}}(x)\,) \dd x \\
		& + \int_{\cl{Q_{r_j}(x_0)}}  (\varphi \circ T^{(x_0,r_j)})(x) (g \circ \Gamma^{\xi_0}))\bigg(\frac{x}{\eps_j}\bigg)h^\infty\bigg(\frac{\dd \mu_{\eps_{j(i)}}}{\dd |\mu^s_{\eps_{j(i)}}|}(x)\bigg) \dd|\mu^s_{\eps_{j(i)}}| (x)\bigg) \\
		& = r^{-d}_j\ddprb{\varphi \circ T^{(x_0,r_j)} \otimes  g \circ \Gamma^{\xi_0} \otimes h, \bm{\nu}},
		\end{split}
		\end{equation}
		where in passing to the last equality we have used that $\mu_{\eps} \toYY \bm{\nu}$ in $\Y^2_\loc(\R^d;E)$.
		Applying  this to $|\frarg|_{E}$ yields, together with (\ref{eq:x_01}) and (\ref{eq:x_03}),
		\begin{align}\label{eq:tris}
		\sup_{j \in \Nbb} \ddprb{\phi \otimes g \otimes |\frarg|_{E} , \bm{\sigma^{(j)}}} 
		< \infty \quad \text{for all $\phi \in \Crm_c(\R^d)$ and $g \in \Crm^1(Z)$}\,.
		\end{align}
		We are then in position to apply the following global version of Corollary \ref{cor:Compactnessofyoungmeasures} (whose proof relies on a localization argument): there exists  a subsequence of $(r_j)_{j \in \Nbb}$ (not relabeled) satisfying
		\[
		\text{$\bm{\sigma^{(j)}} \toweakstar \bm{D\nu(x_0)}$ \; in \; $\E^2(Q;E)^*$,  \; for some $\bm{D\nu(x_0)}\in  \Y^2(Q;E)$}\,.
		\]

		
		\emph{Step 4: Characterization of $\bm{\sigma}$.} Fix $m \in \Nbb$ and let $j \in \Nbb$ be an arbitrary positive integer. 
		Let us write $\tilde f_m \coloneqq \phi_m \otimes g_m \circ \Gamma^{\xi_0} \otimes h_m$. For an arbitrary positive real number $\eta > 0$, we may use the uniform continuity of $g_m$ (recall that $g_m \in \Crm^1(Z)$) and the density of the set of points $\{\xi_k\}_{k \in \Nbb}$ in $Z$ to find a sufficiently large $k = k(\eta)$ with the following property (here we use the positivity of $\phi_m$ and $h_m$):
		\begin{gather*}
		|\xi_0 - \xi_{k}| = \BigO(\eta),  
		\end{gather*}
		and
		\begin{align*}
		|\tilde f_m - f_{k,m}|(x,\xi,z) & \le \|g_m \circ \Gamma^{\xi_0} - g_m \circ \Gamma^{\xi_k} \|_\infty \cdot \phi_m(x)\cdot h_m(z) \\
		& \le \BigO(\eta) \, [{\varphi \otimes \chi_Z \otimes |\frarg|_E(x,\xi,z)}],
		\end{align*}
		where $\BigO(\eta) \to 0$ as $\eta$ tends to zero. In particular,
		\begin{align*}
		\limsup_{j \in \Nbb} \; & \Big|{r_j}^{-d} \bpr{\tilde f_m,\bm\nu}(\cl{Q_{r_j}(x_0)}) -  {r_j}^{-d} \bpr{f_{k(\eta),m},\bm\nu}(\cl{Q_{r_j}(x_0)})\Big| \\
		& \le \BigO(\eta) \cdot \|\varphi_m\|_\infty \cdot \textrm{Lip}(h_m) \cdot \limsup_{j \in \Nbb} r_j^{-d}\bpr{\phi_m \otimes \chi_Z \otimes |\frarg|_E,\bm{\nu}}(\cl{Q_{r_j}(x_0)}) \\
		&  \lesssim_{(m)} \BigO(\eta) \cdot \liminf_{j \to \infty} \bigg( 1 + \frac{\lambda(\cl Q_{r_j}(x_0))}{r_j^d} \bigg).
		\end{align*}

		Testing~\eqref{eq:loc2} with $f_m$ (not to be confused with $\tilde f_m$), it follows from the estimate above and~\eqref{eq:x_01} that 
		\begin{equation}\label{eq:x_04}
		\begin{split} 
		\ddprb{f_m, \bm{\sigma^{(j)}}} 
		& = {r_j}^{-d} \ddprb {\phi_m \circ T^{(x_0,r_j)} \otimes g_m \circ \Gamma^{\xi_0} \otimes h_m, \bm{\nu}}  \\
		& = {r_j}^{-d} \bpr{\tilde f_m,\bm\nu}(\cl Q_{r_j}(x_0)) \\
		& = {r_j}^{-d} \bpr{f_{k(\eta),m},\bm\nu}(\cl Q_{r_j}(x_0)) + \BigO(\eta). 
		\end{split}
		\end{equation}
		%
		Letting $j \to \infty$ at both sides of~\eqref{eq:x_04},~\eqref{eq:x_02} and the weak-$*$ convergence $\bm{\sigma^{(j)}} \toweakstarY \bm{D \nu(x_0)}$ imply 
		\begin{align*}
		\ddprb{f_m,\bm{D \nu(x_0)}}  = \lim_{j \to \infty}  \ddprb{f_m, \bm{\sigma^{(j)}}}  = \bbpr{f_{k(\gamma),m},\nu}_{x_0}(Z) + \BigO(\eta).
		\end{align*}
		Following analogous arguments to the ones in~\eqref{eq:x_04} and using $|\xi_0 - \xi_{k(\gamma)}| = \BigO(\gamma)$, we may let $\eta \searrow 0$ to deduce\begin{equation}\label{eq:x_05}
		\begin{split}
		\ddprb{f_m,\bm{D \nu(x_0)}} & = \bbpr{\tilde f_m,\nu}_{x_0}(Z) \\
		& = \int_Q\bigg(\int_Z \dpr{f_m,\nu_{x_0,\xi_0 + \xi}} \dd(\Gamma^{\xi_0}_\#\Leb^d_Z)(\xi) \\
		& \qquad + \frac{\dd \lambda^\nu}{\dd \Leb^d}(x_0)\int_Z\dpr{f_m^\infty,\nu_{x_0,\xi_0 + \xi}^\infty} \dd(\Gamma^{\xi_0}_\#\rho_{x_0})(\xi)\bigg)\dd y.
		\end{split}
		\end{equation}
		The sought assertion follows from the arbitrariness of $m$ on Step~4 (see~\eqref{eq:x_05}) and Lemma~\ref{lem:density} which translates into the equivalence
		\[
		\sigma \equiv \Big(\nu_{x_0,\xi_0 + \xi},\lambda^{ac}(x_0) \, \Leb^d,\Gamma^{\xi_0}_\#\rho_{x_0},\nu_{x_0,\xi_0 + \xi}^\infty\Big)_{y \in \cl Q, \xi \in Z}
		\]
		as two-scale* Young measures in the set $\Y^2(Q;\R^N)$. Notice we have used that the $d$-dimensional Lebesgue is uniformly distributed and hence $\Gamma_\#^{\xi_0} \Leb_Z^d \equiv \Leb_Z^d$. This proves the desired result.
	\end{proof}
	\subsection{Localization at singular points}
	
	\begin{proposition}\label{prop:loc_singular}Let $\bm{\nu} = (\nu,\lambda,\rho,\nu^\infty) \in \Y^2(\Omega;E)$ be two-scale* Young measure which is generated by a sequence of measures $(\mu_{\eps_i})_{i \in \Nbb} \subset \M(\Omega;\R^N)$.
		Then, there exists a set $S \subset \Omega$ with full $\lambda^s$-measure that satisfies the following property: 
		at every $x_0 \in S$ there exists (up to a translation in $Z$) a local tangent two-scale* Young measure $\bm{D\nu}= (D\nu,D \lambda,D \rho,D\nu^\infty)\in \Y^2(Q;\R^N)$ of $\bm\nu$ at $x_0$. That is, there exists $\xi_0 = \xi(x_0) \in Z$ such that
		\begin{align}
		D \lambda & \in \Tan_1(\lambda^s,x_0)\,, \\ 
		D\rho_y & = \Gamma^{\xi_0}_{\#}\rho_{x_0} \quad \text{for $D \lambda$-almost every $y\in Q$}\,.
		\end{align}
		Moreover,  $D\nu$ is concentrated on zero and $D\nu^\infty$ is homogeneous in the sense that
		\begin{align}
		D\nu_{y,\xi} & =  \delta_0 \qquad \quad \text{for $(\Leb^d_Q \otimes \Leb^d_Z)$-almost every $(y,\xi) \in Q \times Z$},\\
		D\nu^\infty_{y,\xi} & = \nu^\infty_{x_0,\xi + \xi_0} \quad  \text{for $\big(D \lambda \otimes D\rho_y\big)$-almost every $(y,\xi) \in Q \times Z$}.
		\end{align}
	\end{proposition}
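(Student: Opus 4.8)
The plan is to adapt the proof of Lemma~\ref{lem:locatregular}, replacing the spatial dilation weight $r^{-d}$ by $\lambda^s(\cl{Q_r(x_0)})^{-1}$ and blowing up at $\lambda^s$-a.e.\ point. Let $\{f_m=\phi_m\otimes g_m\otimes h_m\}_m$ be the countable separating family of Lemma~\ref{lem:density} (each $g_m\in\Crm^1(Z)$, each $h_m$ uniformly Lipschitz and non-negative), $\{\xi_k\}_k\subset Z$ countable dense, $f_{k,m}:=\phi_m\otimes(g_m\circ\Gamma^{\xi_k})\otimes h_m$, and write $g^\nu(x,\xi):=\dpr{|\frarg|,\nu_{x,\xi}}\in\Lrm^1(\Omega\times Z)$. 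Since every $\Leb^d$-absolutely continuous measure is $\lambda^s$-null, at $\lambda^s$-a.e.\ $x_0$ all of the following hold, and $S$ is the set of those $x_0\in\supp\lambda^s$ at which they do: (a) $x_0\in\sing_{\bm{\nu}}(\Omega)$, i.e.\ $r^d/\lambda^s(\cl{Q_r(x_0)})\to0$; (b) $\lambda^s(\cl{Q_r(x_0)})^{-1}\bigl(\int_{Q_r(x_0)}\int_Z g^\nu\,\dd\xi\,\dd x+\lambda^{ac}\Leb^d(\cl{Q_r(x_0)})\bigr)\to0$; (c) $x_0$ is a $\lambda^s$-Lebesgue point of each weighted barycenter density $x\mapsto\bbpr{f_{k,m},\bm{\nu}}_x(Z)$, which by~\eqref{eq:density3} and the definition of the second-scale $f$-barycenter equals $\int_Z\dpr{f_{k,m}^\infty(x,\xi,\frarg),\nu^\infty_{x,\xi}}\,\dd\rho_x(\xi)$ for $\lambda^s$-a.e.\ $x$; (d) every tangent measure in $\Tan_1(\lambda^s,x_0)$ is a probability measure. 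Fix $x_0\in S$ and, after passing to a subsequence of $(\mu_\eps)$, assume $\area{x_0/\eps_i}\to\xi_0\in Z$ exactly as in~\eqref{eq:limitpoint}.

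For an infinitesimal $r_j\downarrow0$ with $c_j:=\lambda^s(\cl{Q_{r_j}(x_0)})>0$ set $\delta_{i,j}:=\eps_i/r_j$ and $\gamma_{\delta_{i,j}}:=c_j^{-1}T^{(x_0,r_j)}_\#\mu_{\eps_i}\in\M(\cl Q;E)$; these are bounded over $i$ for each $j$. As in Step~2 of Lemma~\ref{lem:locatregular}, a nested-subsequence argument based on Theorem~\ref{thm:representation} together with a diagonal choice produce $\bm{\sigma^{(j)}}\in\Y^2(Q;E)$ with $\gamma_{\delta_{j(i),j}}\toYY\bm{\sigma^{(j)}}$, and then (along a subsequence in $j$) $\bm{\sigma^{(j)}}\toweakstar\bm{D\nu(x_0)}$ for some $\bm{D\nu(x_0)}\in\Y^2(Q;E)$. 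Performing in $I^{\delta_{j(i),j}}_f(\gamma_{\delta_{j(i),j}})$ the change of variables $x=x_0+r_jy$ and the $\BigO$-replacement~\eqref{eq:xi0appears} of $g(x/\eps_{j(i)})$ by $(g\circ\Gamma^{\xi_0})(x/\eps_{j(i)})$, and testing the original sequence against the compactly supported integrand $(\phi\circ T^{(x_0,r_j)})\otimes(g\circ\Gamma^{\xi_0})\otimes(h-h(0))\in\E^2(\Omega;E)$, one is led to the identity
\begin{equation}\label{eq:singtest}
\ddprb{f,\bm{\sigma^{(j)}}}=h(0)\!\int_Q\!\phi\,\dd y\!\int_Z\! g\,\dd\xi+\frac{1}{c_j}\,\bpr{\,(\phi\circ T^{(x_0,r_j)})\otimes(g\circ\Gamma^{\xi_0})\otimes(h-h(0)),\,\bm{\nu}\,}(\cl{Q_{r_j}(x_0)})+o_j(1)
\end{equation}
for every $f=\phi\otimes g\otimes h\in\E^2(Q;E)$ with $\phi\in\Crm_c(Q)$, $g\in\Crm^1(Z)$ and $h$ uniformly Lipschitz. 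The $o_j(1)$ arises from the mismatch between the argument $c_j^{-1}r_j^d\,\mu^{ac}_{\eps_{j(i)}}$ of $h$ in the blow-up and the argument $\mu^{ac}_{\eps_{j(i)}}$ on the right-hand side; controlling it is where one separates the equi-integrable part of $(\mu^{ac}_{\eps})$ — whose rescaled biting mass on $Q_{r_j}(x_0)$ is $c_j^{-1}\!\int_{Q_{r_j}(x_0)}\!\int_Z g^\nu\to0$ by~(b) — from its concentrating part, which after rescaling merges with the blow-up of $(\mu^s_{\eps})$ into the concentration data of $\bm{\sigma^{(j)}}$ and is matched by the right-hand side of~\eqref{eq:singtest}. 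By~(b) again the $\nu$-barycenter and the $\lambda^{ac}$-contributions to that right-hand side are $o_j(1)$, so only $c_j^{-1}\!\int_{\cl{Q_{r_j}(x_0)}}(\phi\circ T^{(x_0,r_j)})(x)\int_Z(g\circ\Gamma^{\xi_0})(\xi)\dpr{h^\infty,\nu^\infty_{x,\xi}}\,\dd\rho_x(\xi)\,\dd\lambda^s(x)$ survives.

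The relation $D\nu_{y,\xi}=\delta_0$ follows by a mass balance. Taking $f=\chi_Q\otimes\chi_Z\otimes|\frarg|_E$ in~\eqref{eq:singtest} and using $\bpr{\chi\otimes\chi\otimes|\frarg|_E,\bm{\nu}}(\cl{Q_{r_j}(x_0)})=\int_{Q_{r_j}(x_0)}\int_Z g^\nu+\lambda(\cl{Q_{r_j}(x_0)})$, conditions (a),(b) give $\ddprb{\chi\otimes\chi\otimes|\frarg|_E,\bm{\sigma^{(j)}}}\to1$; on the other hand the biting limit (Lemma~\ref{chacon}) of $|\gamma^{ac}_{\delta_{j(i),j}}|$ is $y\mapsto c_j^{-1}r_j^d\!\int_Z g^\nu(x_0+r_jy,\xi)\dd\xi$, so $\int_Q\int_Z\dpr{|\frarg|_E,D\nu^{(j)}_{y,\xi}}\dd\xi\,\dd y=c_j^{-1}\!\int_{Q_{r_j}(x_0)}\!\int_Z g^\nu\to0$, and since $\dpr{|\frarg|_E,D\nu^{(j),\infty}}\equiv1$ this forces $D\lambda^{(j)}(\cl Q)\to1$. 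By the Portmanteau inequality for the closed set $\cl Q\times Z\times\partial\Bbb_E$ and the weak-$*$ convergence $S^*\bm{\sigma^{(j)}}\toweakstar S^*\bm{D\nu(x_0)}$ on the compact space $\cl{Q\times Z\times\Bbb_E}$ (Remark~\ref{rem:iso}), $D\lambda(\cl Q)\ge\limsup_j D\lambda^{(j)}(\cl Q)=1$; combined with $\ddprb{\chi\otimes\chi\otimes|\frarg|_E,\bm{D\nu(x_0)}}=D\lambda(\cl Q)+\int_Q\int_Z\dpr{|\frarg|_E,D\nu}=1$ and positivity, $\int_Q\int_Z\dpr{|\frarg|_E,D\nu_{y,\xi}}\dd\xi\,\dd y=0$, i.e.\ $D\nu_{y,\xi}=\delta_0$ for $(\Leb^d_Q\otimes\Leb^d_Z)$-a.e.\ $(y,\xi)$. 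For the other components, extract once more a subsequence of $(r_j)$ so that $c_j^{-1}T^{(x_0,r_j)}_\#\lambda^s\toweakstar D\lambda\in\Tan_1(\lambda^s,x_0)$ (a probability measure by~(d), consistent with $D\lambda(\cl Q)=1$), coherently across the countable family of test integrands as in Step~4 of Lemma~\ref{lem:locatregular}. Letting $j\to\infty$ in~\eqref{eq:singtest}, the $\lambda^s$-Lebesgue-point property~(c) (upgraded to a strong Lebesgue point for $x\mapsto\int_Z(g\circ\Gamma^{\xi_0})(\xi)\dpr{h^\infty,\nu^\infty_{x,\xi}}\dd\rho_x(\xi)$ by approximation from the countable family) and~\eqref{eq:translationinvariance} yield
\begin{equation}\label{eq:singid}
\ddprb{f,\bm{D\nu(x_0)}}=\int_Q\!\int_Z f(y,\xi,0)\,\dd\xi\,\dd y+\int_{\cl Q}\!\int_Z \dpr{f^\infty(y,\xi,\frarg),\nu^\infty_{x_0,\xi+\xi_0}}\,\dd(\Gamma^{\xi_0}_\#\rho_{x_0})(\xi)\,\dd D\lambda(y).
\end{equation}
As $f$ ranges over the separating family of Lemma~\ref{lem:density}, \eqref{eq:singid} identifies $\bm{D\nu(x_0)}=\bigl(\delta_0,D\lambda,\Gamma^{\xi_0}_\#\rho_{x_0},\nu^\infty_{x_0,\xi+\xi_0}\bigr)_{y\in\cl Q,\,\xi\in Z}$, which is the asserted local tangent two-scale* Young measure, and $S$ has full $\lambda^s$-measure by construction.

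The main obstacle is the behaviour of the absolutely continuous part of the blow-ups under the non-standard normalization $c_j^{-1}=\lambda^s(\cl{Q_{r_j}(x_0)})^{-1}$: unlike in Lemma~\ref{lem:locatregular}, where the weight $r_j^{-d}$ makes the correspondence ``$\bm{\sigma^{(j)}}\leftrightarrow$ rescaled barycenter'' exact, here the rescaling $c_j^{-1}r_j^d\to0$ of the densities $\mu^{ac}_{\eps}$ destroys that exact relation, and one must show separately — using the biting decomposition (Lemma~\ref{chacon}) and the $\lambda^s$-negligibility~(b) of $\int_Z g^\nu\dd\xi$ — that the equi-integrable portion contributes vanishing mass (hence $D\nu=\delta_0$) while its concentrating portion, together with the blow-up of $\mu^s_{\eps}$, produces exactly the tangent data governed by the $\lambda^s$-Lebesgue points of the singular weighted barycenters. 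This is the step where Rindler's singular blow-up technique~\cite{rindler2011lower-semiconti} must be adapted to the present torus-enriched, non-biting setting; the remaining arguments are routine variants of the regular case.
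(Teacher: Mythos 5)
Your argument is correct and follows essentially the same route as the paper's proof: the same choice of the singular set $S$ (the density condition \eqref{eq:x01} together with $\lambda^s$-Lebesgue points of the translated weighted barycenters), the same $\lambda^s(Q_r(x_0))^{-1}$-normalized blow-ups with the nested/diagonal extraction and the $\xi_0$-translation estimate, and the same limiting identity identified through the separating family of Lemma~\ref{lem:density}. The only differences are cosmetic: the paper needs no biting decomposition, since it keeps the exact identity \eqref{eq:sloc2} with the rescaled integrand $h(c_jr_j^d\,\frarg)$ (whose recession is $c_jr_j^dh^\infty$) and performs the $h(0)$-replacement only in the limit $j\to\infty$ via the Lipschitz bound and \eqref{eq:x01} --- which is precisely what justifies the $o_j(1)$ in your intermediate identity --- and your separate mass-balance argument for $D\nu_{y,\xi}=\delta_0$ is redundant, as the final identity already forces it.
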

	
	\begin{proof}
		The structure of the proof is very similar to the one of the localization principle for regular points. The main difference lies in the scaling of the blow up and, as a consequence of that, different terms  vanish.

		Let $\eps \searrow 0$ and $(\mu_{\eps}) \subset \Mcal (\overline {\Omega};E)$ be such that $\mu_j \toYY \bm{\nu}$. We select a countable dense family $\{\xi_k\}_{k \in \Nbb}$ in $Z$ and we write $\{f_m = \phi_m \otimes g_m \otimes h_m\,|\, m\in \N \} \subset \E^2(Q,E)$ to denote the restriction to $\cl Q \times Z \times E$ of the dense subset introduced in Lemma \ref{lem:density}. Additionally, this time we will assume without loss of generality that $h_1 \equiv \chi_{E}$. Recall we may assume the $h_m$'s to be Lipschitz continuous. 
		
		\emph{Step~1. Selection of the singular set $S$.} We shall consider points $\tilde x \in \Omega$ satisfying the density estimate
		\begin{gather} \label{eq:x01}
		\lim_{r\todown 0}\displaystyle \frac { r^d + \int_{Q_r(\tilde x)} \int_Z \dpr{|\frarg| \dd \xi,\nu_{x,\xi}} \dd \xi + \int_{Q_r(\tilde x)} \lambda^{ac}(x) \dd x}{\lambda^s(Q_r(\tilde x))} =0\,,
		\end{gather}
		and which are
		\begin{equation}
		\text{$\lambda^s$-Lebesgue points of $\bpr{f_{k,m},\bm\nu} \in \M(\cl\Omega)$ \quad for all $k,m \in \Nbb$}\,.
		\end{equation} Here, these barycenter measures are parametrized by the $x$-homogenous integrands 
		\[
		f_{k,m} \coloneqq \chi_\Omega \otimes g_m\circ \Gamma^{\xi_k} \otimes h_m \quad k,m \in \Nbb.
		\]
		In particular, by~\eqref{eq:density3} and the Lipschitz continuity of the $h_m$'s, at such points $\tilde x \in \Omega$ it holds
		\begin{align}\label{eq:x02}
		\frac{\dd [f_{k,m},\bm\nu]}{\dd \lambda^s}(\tilde x) = \bbpr{f_{k,m},\bm\nu}_{\tilde x}(Z) = \bbpr{f_{k,m}^\infty,\bm\nu}_{\tilde x}(Z) 
		\quad \text{for all $k,m \in \Nbb$}.
		\end{align}
		For the rest of the proof we fix a point $x_0 \in S \coloneqq \setb{\tilde x \in \Omega}{\text{$\tilde x$ satisfies \eqref{eq:x01}-\eqref{eq:x02}}}$, which is a set of full $\lambda^s$-measure in $\Omega$.
		
		\proofstep{Step~2. Blow-up sequence.} Since $Z$ is a compact manifold, we may again restrict to a subsequence of the generating sequence $(\mu_{\eps_i})_{i \in \Nbb}$  and find $\xi_0 = \xi_0(x_0) \in Z$ such that
		\[
		\Big\langle \frac{x_0}{\eps_i} \Big \rangle \to \xi_0 \in Z \quad \text{as $i \to \infty$}.
		\]
		%
		%
		
		For a positive radius $r>0$,  we set $c_r  \coloneqq\lambda^s(Q_{r}(x_0))^{-1}$. By the compactness properties of measures we may find a weak* convergent blow-up sequence in the sense there exists a sequence of infinitesimal radii $(r_j)_{j \in \Nbb}$ such that
		\begin{equation}\label{eq:Lambda}
		\Lambda_j \coloneqq c_{r_j} T^{(x_0,r_j)}_\# \lambda^s \toweakstar D\lambda \in \Tan_1(\lambda^s,x_0) \quad \text{and} \quad \gamma(\cl Q) = 1. 
		\end{equation}
		For the sake of simplicity let us write $c_j \coloneqq c_{r_j}$. We consider, for fixed $j \in \Nbb$, the $i$-indexed sequence 
		\[
		\gamma_{i,j} \coloneqq c_{j_0}T_\#^{(x_0,r_{j_0})} \mu_{\eps_i}, \quad \text{where \;
			$\delta_{i,j} \coloneqq \frac{\eps_i}{r_{j}}$} \searrow 0\quad \text{as $i \to \infty$}\,.
		\]
		After an iterative procedure as the one for regular points, for each $j \in \Nbb$ we may find a sequence $j(i)$ with the following properties: $\{(j+1)(i)\}_{i \in \Nbb} \subset \{j(i)\}_{i \in \Nbb}$ for all $j \in \Nbb$, and 
		\[
		\gamma_{j(i),j} \toYY \bm{\sigma^{({j})}} \; \text{for some $\bm{\sigma^{({j})}} \in \Y^2(Q;E)$}, \qquad j \in \Nbb.
		\]
		Moreover, up to passing to a subsequence of $\{j\}_{j \in \Nbb}$, we may assume there exists $\bm{D\nu(x_0)} \in \Y^2(Q;E)$ for which
		\begin{equation}\label{eq:limiting}
		\bm{\sigma^{({j})}} \toweakstar \bm{D\nu(x_0)} \quad \text{in $\E^2(Q;\R^N)^*$}.
		\end{equation}
		
		\emph{Step 3: Characterization of $\bm{\sigma^{({j})}}$.} 
		For fixed $\phi \otimes g \otimes h \in \Crm(\overline {Q}) \times \Crm^1(Z) \times \E^0(\R^N)$ we deduce, by the same  change of variables used in (\ref{eq:loctest1}), that 
		\begin{equation}
		\begin{split}
		\ddprb{f, & \bm{\sigma^{(j)}} }  = \lim_{i \to \infty}\bigg(
		\int_Q \phi(y)\,g({y}/{\,\delta_{j(i)}}\,)\,h\big(\,c_jr_j^d\mu^{ac}_{\eps_{j(i)}}(x_0 + r_jy)\,\big) \dd y \\
		&+ \int_{\cl Q}  \phi(y)g\bigg(\frac{y}{\delta_{j(i)}}\bigg)h^\infty\bigg(\frac{\dd \mu_{\eps_{j(i)}}}{\dd |\mu^s_{\eps_{j(i)}}|}(x_0 + r_jy)\bigg) \dd \big(c_{j}
		T_\#^{(x_0,r_j)}|\mu^s_{\eps_{j(i)}}|)(y)\bigg) \\
		& =  \lim_{i \to \infty} \frac 1{r_j^{d}}\bigg(\int_{Q_{r_j}(x_0)} \big(\varphi\circ T^{(x_0,r_j)}\big)(x)  \, g\bigg(\frac{x - x_0}{\eps_{j(i)}}\bigg)h\big(c_jr_j^d{\mu^{ac}_{\eps_{j(i)}}}(x)\big) \dd x \\
		&+  \int_{\cl{Q_{r_j}(x_0)}}  \big(\varphi\circ T^{(x_0,r_j)}\big)(x)  \,g\bigg(\frac{x - x_0}{\eps_{j(i)}}\bigg)h^\infty\bigg(\frac{\dd \mu_{\eps_{j(i)}}}{\dd |\mu^s_{\eps_{j(i)}}|}(x)\bigg) \dd(c_jr_j^d|\mu^s_{\eps_{j(i)}} |)(x)\bigg).
		\end{split}
		\end{equation} 
		Making use of~\eqref{eq:x01} and the estimate~\eqref{eq:xi0appears} as it was used in~\eqref{eq:loc2}, and the fact that $\mu_{\eps_{j(i)}} \toYY \bm{\nu}$, we can re-write the limit on the right hand side in terms of a weighted barycenter measure as (here we use that $h^\infty(c \frarg) = ch^\infty$ for all $c \ge 0$)
		\begin{equation}\label{eq:sloc2}
		\begin{split}
		\ddprb{f , & \bm{\sigma^{(j)}}}  =  \frac 1{r_j^{d}}\lim_{i \to \infty} \bigg(\int_{Q_{r_j}(x_0)} (\varphi \circ T^{(x_0,r_j)})(x) \, (g \circ \Gamma^{\xi_0}))\bigg(\frac{x}{\eps_{j(i)}}\bigg) \, h\big(c_jr_j^d{\mu^{ac}_{\eps_{j(i)}}}(x)\big) \dd x \\
		& + \int_{\cl{Q_{r_j}(x_0)}}  (\varphi \circ T^{(x_0,r_j)})(x) (g \circ \Gamma^{\xi_0}))\bigg(\frac{x}{\eps_{j(i)}}\bigg)h^\infty\bigg(c_jr_j^d\frac{\dd \mu_{\eps_{j(i)}}}{\dd |\mu^s_{\eps_{j(i)}}|}(x)\bigg) \dd|\mu^s_{\eps_{j(i)}}| (x)\bigg) \\
		& = r_j^{-d}[\varphi \circ T^{(x_0,r_j)} \otimes  g \circ \Gamma^{\xi_0} \otimes h(c_jr_j^d \frarg),\bm{\nu}](\cl{Q_{r_j}(x_0)}).
		\end{split}
		\end{equation}
		
		\proofstep{Step~4. Characterization of $\bm{D\nu(x_0)}$.} The last step consists on characterizing the limiting Young measure  in~\eqref{eq:limiting} as $j$ tends to infinity. Here, we expect the biting part of $\bm{\sigma}$ to vanish since we are performing a blow-up at a purely singular point $x_0 \in S$. Arguing as in Step~4 of the proof of the localization at regular points, we cast the $\BigO(\eta)$-approximation of the dense family $\{\xi_k\}$ behind~\eqref{eq:x_04}; this time with the integrands $\tilde f_{k,m} \coloneqq \phi_m \circ T^{(x_0,r_j)} \otimes g_m \circ \Gamma^{\xi_k} \otimes h_m(c_j r_j^d \frarg)$. For positive $\eta$ we find $k(\eta) \in \Nbb$ sufficiently large so that that (using the density of the family $\{\xi_k\}_{k \in \Nbb}$ in $Z$, and the uniform continuity of $g_m$)
		\[
		|\xi_0 - \xi_k(\eta)| = \BigO(\eta) \quad \text{and} \quad |\tilde f_m - \tilde f_{k,m}| \le \BigO(\eta) \,  [{\varphi \otimes \chi_Z \otimes (1 + c_jr_j^d|\frarg|_E) (x,\xi,z)}],
		\] 
		where $\tilde f_m  \coloneqq \phi_m \circ T^{(x_0,r_j)} \otimes g_m \circ \Gamma^{\xi_0} \otimes h_m(c_j r_j^d \frarg)$.  
		Using these estimates in~\eqref{eq:sloc2} we deduce by~\eqref{eq:x01} that
		\begin{equation}\label{eq:?}
		\begin{split}
		\ddprb{f_m, \bm{\sigma^{(j)}}}  
		& 
		= {r_j}^{-d} \bpr{\tilde f_{k(\eta),m},\bm\nu}(\cl{Q_{r_j}(x_0)}) + \BigO(\eta).
		\end{split}
		\end{equation}

		For the next step let  $\eta$ be an arbitrary positive real. We claim  that taking then the limit as $j\to \infty$ at both sides of the equality yields 
		\begin{equation}\label{eq:yacasi}
		\begin{split}
		\lim_{j \to \infty} \ddprb{f_m, \bm{\sigma^{(j)}}} & =  h_m (0)\int_Q \phi_m(y) \bigg(\int_Z g_m \circ \Gamma^{k(\eta)}(\xi) \dd \xi \bigg) \dd y  + \\ & \qquad  \bbpr{\tilde f_{k(\eta),m},\bm{\nu}}_{x_0}(Z)\cdot \int_{\cl{Q}}\phi_m(y) \dd (D\lambda)(y)  +  \BigO(\eta)\,.
		\end{split}
		\end{equation}
		First, we deal with the absolutely continuous part of the barycenter measure in~\eqref{eq:?} as follows. For all $\Leb^d$-almost every $x \in \cl{Q_{r_j}(x_0)}$ we estimate 
		(recall that $h_1 \equiv \chi_{\R^N}$)
		
		\[
		\left| \frac{\dd \bpr{\tilde f_{k(\eta),m} - \phi_m\circ T^{(x_0,r_j)} \otimes g_m \circ \Gamma^{\xi_{k(\eta)}} \otimes h_m(0)h_1,\bm\nu}}{\dd \Leb^d}(x)\right| \le  c_jr_j^dC(m)\cdot \bbpr{|\frarg|_E,\bm{\nu}}_{x}(Z),
		\]
		where $C(m) \coloneqq \|\phi_m\|_\infty\cdot\|g_m\|_\infty \cdot\textrm{Lip}(h_m)$. This estimate leads to the bound
		\begin{align*}
		\dashint_{{Q_{r_j}(x_0)}}& \bigg|\frac{\dd \bpr{\tilde f_{k(\eta),m} - \phi_m\circ T^{(x_0,r_j)} \otimes g_m \circ \Gamma^{\xi_{k(\eta)}} \otimes h_m(0)h_1,\bm\nu}}{\dd \Leb^d}(x)\bigg| \dd x\\& \lesssim_{(m)} 
		\frac {\int_{Q_{r_j}(x_0)} \int_Z \dpr{|\frarg|_E \dd \xi,\nu_{x,\xi}} \dd \xi + \int_{Q_{r_j}(x_0)} \lambda^{ac}(x) \dd x}{\lambda^s(Q_{r_j}(\tilde x))} = \BigO(r_j).
		\end{align*}
		Here, to reach the last equality we have used that $x_0 \in S$ satisfies~\eqref{eq:x01}. Thus, when passing to limit $j \to \infty$, we may substitute the absolutely continuous part of $r_j^{-d}\bpr{\tilde f_{k(\eta)}, \bm{\nu}}$ in~\eqref{eq:?} by the integrable function 
		\[
		x \mapsto \frac{1}{r^d_j}\bigg( \int_Z\dprb{\tilde f_{k(\eta),m}(x,\xi,\frarg),\delta_0} \dd \xi \bigg)\,.
		\] 
		We now deal with the singular  part and the passing to the limit. By~\eqref{eq:limiting} and the density identity from~\eqref{eq:x02} (applied to the point $\tilde x = x_0 \in S$), we may let $j$ tend to infinity at both sides of~\eqref{eq:yacasi} to deduce (recall that $h_m^\infty(c_jr_j^d \frarg) = c_jr_j^dh_m^\infty$)
		\begin{align*}
		\ddprb{f_m,\bm{D\nu(x_0)}} &  =  \lim_{j \to \infty} \bigg(\dashint_{Q_{r_j(x_0)}}\int_Z \dpr{\tilde f_{k(\eta),m}(x,\xi,\frarg),\delta_0} \dd \xi \dd x \\
		& \qquad  +  \frac{1}{r_j^d}\int_{\cl{Q_{r_j}(x_0)}}\frac{\dd \bpr{\tilde f_{k(\eta),m}^\infty,\bm\nu}}{\dd \lambda^s}(x) \dd \lambda^s(x) \bigg) + \BigO(\eta) \\
		&  =  h_m (0)\int_Q \phi_m(y) \bigg(\int_Z g_m \circ \Gamma^{\xi_{k(\eta)}}(\xi) \dd \xi \bigg) \dd y  + \\ & \qquad  \lim_{j \to \infty} \dashint_{\cl{Q_{r_j}(x_0)}} \phi_m(x)\frac{\bpr{\tilde f^\infty_{k(\eta),m},\bm{\nu}}}{\dd \lambda^s}(x) \dd\lambda^s(x) + \BigO(\eta)\\
		& =  h_m (0)\int_Q \phi_m(y) \bigg(\int_Z g_m \circ \Gamma^{\xi_{k(\eta)}}(\xi) \dd \xi \bigg) \dd y  + \\ & \qquad  \bbpr{\tilde f_{k(\eta),m},\bm{\nu}}_{x_0}(Z)\cdot \int_{\cl{Q}}\phi_m(y) \dd (D\lambda)(y)  + \BigO(\eta)\,,
		\end{align*}
		where we have used $\Lambda_j = c_j T^{(x_0,r_j)}_\#\lambda^s \toweakstar D\lambda$ on $\cl Q$ in passing to the last equality. This proves the claim.

		To conclude we observe that $\eta$ has so far been chosen arbitrarily. Therefore, by the dominated convergence theorem, the identity~\eqref{eq:yacasi} implies 
		\begin{equation}
		\begin{split}
		\ddprb{f_m,\bm{D\nu(x_0)}} & = h_m (0)\int_Q \phi_m(y) \bigg(\int_Z g_m \circ \Gamma^{\xi_0}(\xi) \dd \xi \bigg) \dd y \\ & \qquad +  \bbpr{\tilde f_{m},\bm{\nu}}_{x_0}(Z)\cdot\int_{\cl{Q}}\phi_m(y) \dd (D\lambda)(y) \\
		& = \int_Q\bigg( \int_Z\dprb{f_m(x,\frarg,\frarg),\delta_0}  \dd \xi\bigg)  \dd y \\
		& \qquad  +\int_{\cl Q}\bigg(\int_Z\dpr{f_m^\infty(x,\frarg,\frarg),\nu_{x_0,\xi_0 + \xi}^\infty} \dd(\Gamma^{\xi_0}_\#\rho_{x_0})(\xi) \bigg) \dd (D\lambda)(y)\,.
		\end{split}
		\end{equation}
		Here, we have used that
		$\Gamma^{\xi_0}_\# \Leb^d_Z = \Leb^d_Z$ to reach the second equality. Since $\{f_m\}_{m \in \Nbb}$ separates $\E^2(Q;\R^N)$, 
		Lemma~\ref{lem:density} gives
		\[
		\bm{D\nu(x_0)} = (\delta_0,D\lambda,D\rho,D\nu^\infty), \quad D\lambda \in \Tan_1(\lambda^s,x_0),
		\]
		where $D\rho(x) = \Gamma^{\xi_0}_\#\rho_{x_0}$ and $D\nu^\infty(y,\xi) = \nu_{x_0,\xi_0 + \xi}^\infty$ for all $y \in \cl Q$. This finishes the proof. 
	\end{proof}

	\section{PDE-constrained Young measures}\label{sec:5a}
	
	Let $k \in \N$ and let $E,F$ be finite-dimensional real vector spaces. For $\alpha \in \N^d$ we define its modulus $|\alpha| \coloneqq \alpha_1 + \dots + \alpha_d$. We shall consider general homogeneous differential operators of order $k$ on $\R^d$ from $E$ to $F$, that is, operators of the form 
	\[
	\A = \sum_{\substack{\alpha \in \N^d\\|\alpha| = k}}  A_\alpha\,\partial^\alpha, \qquad A_\alpha \in \textrm{Lin}(E;F).
	\]
	A vector-valued measure $\mu \in \M(\Omega;E)$ is called $\A$-free provided that 
	\[
	\A\mu = 0 \quad \text{in the sense of distributions on $\Omega$}.
	\]
	We say that $\bm\nu \in \Y^2(\Omega;E)$ is an $\A$-free two scale* Young measure if it is generated by a sequence of (asymptotically) $\A$-free measures: 
	
	\begin{definition}[$\A$-free Young measures] Let $1 < p < \frac{d}{d-1}$. A two-scale* Young measure $\bm\nu \in \Y(\Omega;E)$ is called $\A$-free if there exist a sequences $\eps \searrow 0$ and $(\mu_\eps)_\eps \subset \M(\Omega;E)$ such that
		\[
		\A\mu_\eps \to 0 \;\; \text{strongly in $\Wrm^{-k,p}(\Omega)$ \quad and \quad $\mu_\eps \toYY \bm\nu$}.
		\]
	\end{definition}

	\subsection{Rigidity properties of $\A$-free two-scale* Young measures}
	
	Clearly, the barycenter $[{\bm{\nu}}]$ of an $\A$-free two-scale* Young measure is $\A$-free. This same property is inherited to second-scale barycenters $\bbpr{\bm{\nu}}$ as it is portrayed in the next proposition. To deal with a possible abuse of notation about the domain of partial differential operators, we shall write $\A_\xi$ to denote the action of $\A$ on $\Dcal'(Z;E)$, i.e., 
	\[
	(\A_\xi\eta)[g] = \dprb{\eta,\A^*g} \quad \text{for all $\eta \in \Dcal'(Z;E)$ and $g \in \Crm^\infty_\per(Q;F)$}\,.
	\]  
	
	\begin{proposition}\label{prop:secondbarycenterisafree} Let $\bm{\nu} = (\nu,\lambda,\rho,\nu^\infty) \in \Y^2(\Omega;E)$ be an $\A$-free two-scale* Young measure. Then at $(\Leb^d + \lambda^s)$-almost every $x_0 \in \Omega$ it holds that
		\[
		\A_\xi\bbpr{\bm{\nu}}_{x_0} = 0 \quad \text{in the sense of distributions on $Z$}\,.
		\] 
	\end{proposition}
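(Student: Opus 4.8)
The plan is to combine the two-scale convergence of $\mu_\eps$ to $\kappa\otimes\bbpr{\bm\nu}_x$ --- part of the statement following Definition~\ref{def:bary2} --- with the asymptotic $\A$-freeness of a generating sequence, tested against suitably rescaled oscillating functions; in contrast with the blow-up arguments of Section~\ref{sec:5}, no localization is needed. First I would unwind the definition: since $\bm\nu$ is $\A$-free there are $\eps\searrow0$ and $(\mu_\eps)_\eps\subset\M(\Omega;E)$ with $\A\mu_\eps\to0$ strongly in $\Wrm^{-k,p}(\Omega)$ and $\mu_\eps\toYY\bm\nu$. Testing the generation property with the integrand $[\frarg]_E$ gives $\Leb^d(\Omega)+|\mu_\eps|(\Omega)\to\ddprb{[\frarg]_E,\bm\nu}<\infty$, so $C_0\coloneqq\sup_\eps|\mu_\eps|(\Omega)<\infty$; and by~\eqref{eq:1-3}, writing $\kappa\coloneqq\Leb^d_\Omega+\lambda^s$, the sequence two-scale converges to $\kappa\otimes\bbpr{\bm\nu}_x$, i.e.
\[
\int_\Omega\Psi(x,x/\eps)\dd\mu_\eps(x)\;\longrightarrow\;\int_{\cl\Omega}\Big(\int_Z\Psi(x,\xi)\dd\bbpr{\bm\nu}_x(\xi)\Big)\dd\kappa(x)\qquad\text{for all $\Psi\in\Crm(\cl\Omega\times Z;E)$.}
\]

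Next I would fix $\phi\in\Crm^\infty_c(\Omega)$ and $g\in\Crm^\infty_\per(Q;F)$ and test the constraint against $\varphi_\eps(x)\coloneqq\eps^k\phi(x)\,g(x/\eps)\in\Crm^\infty_c(\Omega;F)$. The Leibniz rule shows that $\partial^\alpha\varphi_\eps=\phi\,(\partial^\alpha g)(\cdot/\eps)+\BigO(\eps)$ in $\Lrm^\infty$ for every $|\alpha|=k$ (the prefactor $\eps^k$ exactly cancels the $\eps^{-k}$ produced by $k$-fold differentiation of $g(\cdot/\eps)$, and every other term carries a derivative of $\phi$ and a positive power of $\eps$); hence $\sup_\eps\|\varphi_\eps\|_{\Wrm^{k,p'}(\Omega)}<\infty$, and --- since $\varphi_\eps$ is compactly supported in $\Omega$ --- the strong convergence $\A\mu_\eps\to0$ in $\Wrm^{-k,p}(\Omega)$ forces $\dprb{\A\mu_\eps,\varphi_\eps}\to0$. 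On the other hand, because $\varphi_\eps$ is smooth and compactly supported, $\dprb{\A\mu_\eps,\varphi_\eps}=\int_\Omega\A^*\varphi_\eps\cdot\dd\mu_\eps$, and the same computation gives, with $\A^*_\xi g\coloneqq\sum_{|\alpha|=k}(-1)^kA_\alpha^*\,\partial^\alpha_\xi g\in\Crm^\infty_\per(Q;E)$,
\[
\A^*\varphi_\eps(x)=\phi(x)\,(\A^*_\xi g)(x/\eps)+\eps\,R_\eps(x),\qquad\sup_\eps\|R_\eps\|_\infty<\infty.
\]
Absorbing the remainder with the mass bound $C_0$ leaves $\int_\Omega\phi(x)\,(\A^*_\xi g)(x/\eps)\cdot\dd\mu_\eps(x)\to0$.

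Then I would pass to the two-scale limit in this last expression with $\Psi(x,\xi)=\phi(x)\,(\A^*_\xi g)(\xi)\in\Crm(\cl\Omega\times Z;E)$, obtaining
\[
\int_{\cl\Omega}\phi(x)\Big(\int_Z(\A^*_\xi g)(\xi)\cdot\dd\bbpr{\bm\nu}_x(\xi)\Big)\dd\kappa(x)=0\qquad\text{for every $\phi\in\Crm^\infty_c(\Omega)$.}
\]
Since $x\mapsto\dprb{\bbpr{\bm\nu}_x,\A^*_\xi g}$ is $\kappa$-measurable and $\kappa$-integrable (the map $x\mapsto|\bbpr{\bm\nu}_x|(Z)$ is, by Definition~\ref{def:bary2} and the integrability built into $\Y^2(\Omega;E)$), the fundamental lemma of the calculus of variations yields a $\kappa$-null set $N_g\subset\Omega$ with $\dprb{\bbpr{\bm\nu}_{x_0},\A^*_\xi g}=0$ for $x_0\notin N_g$. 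Finally, fixing a countable $\Crm^k$-dense family $\{g_n\}_{n\in\Nbb}\subset\Crm^\infty_\per(Q;F)$ and setting $N\coloneqq\bigcup_nN_{g_n}$, for $x_0\notin N$ and arbitrary $g$ one approximates $g$ in $\Crm^k$ by $g_{n_j}$, so $\A^*_\xi g_{n_j}\to\A^*_\xi g$ uniformly and $\dprb{\bbpr{\bm\nu}_{x_0},\A^*_\xi g}=\lim_j\dprb{\bbpr{\bm\nu}_{x_0},\A^*_\xi g_{n_j}}=0$; by the definition of $\A_\xi$ recalled before the statement this is exactly $\A_\xi\bbpr{\bm\nu}_{x_0}=0$ in $\Dcal'(Z)$, and $\Omega\setminus N$ has full $(\Leb^d+\lambda^s)$-measure.

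The scheme treats the regular and singular parts of $\bbpr{\bm\nu}_x$ at once because the two-scale limit~\eqref{eq:1-3} already carries the correct $\Leb^d$-- and $\lambda^s$--weighted combination of $\nu$ and $\nu^\infty$. I expect the only genuinely delicate point to be the bookkeeping behind the $\eps^k$-rescaling: one must verify both that $\varphi_\eps$ stays bounded in $\Wrm^{k,p'}$ (so that the strong $\Wrm^{-k,p}$-convergence of $\A\mu_\eps$ is usable, even though $\mu_\eps$ itself need not lie in $\Wrm^{-k,p}$) and that the product-rule remainder $R_\eps$ is $\BigO(\eps)$ uniformly in $\eps$, which is precisely what lets the fast-variable operator $\A^*_\xi$ emerge cleanly in the limit.
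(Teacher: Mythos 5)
Your proof is correct and follows essentially the same route as the paper: the paper also tests the constraint against $\phi\cdot\eps^k g(\cdot/\eps)$, uses the product rule to isolate $\phi\,(\A^*g)(\cdot/\eps)$ up to an $\BigO(\eps)$ remainder absorbed by the mass bound, and then passes to the limit — phrased there as the Young-measure pairing $\ddprb{\phi\otimes\A^*_\xi g\otimes\id_E,\bm\nu}$ rewritten via the second-scale barycenter, which is exactly the content of the two-scale convergence~\eqref{eq:1-3} you invoke. Your explicit countable dense family of test functions $g$ to obtain a single $(\Leb^d+\lambda^s)$-null set is a point the paper leaves implicit ("since $\phi$ was arbitrary, the identity holds locally"), so your write-up is, if anything, slightly more careful on that step.
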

	\begin{proof}
		Let $\phi \in \Crm^\infty_c(\Omega)$ and let $g \in \Crm^\infty(Z)$ be arbitrary functions. 
		Let $(\mu_\eps)_\eps$ be a sequence of $\A$-free measures that generates the Young measure $\bm{\nu}$ (on $\Omega$) and let $T^\varepsilon(x) = x/\varepsilon$ be the re-scaling by the factor $\eps$.
		As a consequence of the product rule there exist constants $c_{\alpha ,\beta}$ such that
		\[
		\A^*(\phi \cdot \eps^k (g\circ T^\eps)) = (-1)^k\sum_{\substack{|\alpha|\le k\\\beta \le \alpha}} \eps^{k - |\beta|}c_{\alpha ,\beta}A^T_\alpha  (\partial^{\alpha - \beta} \phi)(\partial^\beta g) \circ T^\eps,
		\]
		where we write $\beta \le \alpha$ if and only if $\beta_i \le \alpha_i$ for every $i = 1,\dots,d$.
		Observe that $c_{\alpha,\alpha} = 1$ for every $\alpha$, and in particular it follows that 
		\[
		\| \A^*(\phi \cdot  \eps^k (g\circ T^\eps)) - \phi \cdot (\A^* g)\circ T^\eps\|_\infty \to 0, \quad \text{as $\eps \searrow 0$}.
		\]
		Hence, since $\A \mu_\eps \rightarrow 0$ in strongly in $\Wrm^{-k,p}(\Omega) \cembed \Crm_0(\Omega)^*$, we deduce from the convergence above that
		\begin{equation}\label{eq:discarde}
		\begin{split}
		0 & = \lim_{\eps \searrow 0} \int_\Omega \A^*[\phi \cdot \eps^k (g\circ T^\eps)](x) \dd \mu_\eps(x) \\
		& =  \lim_{\eps \searrow 0} \int_\Omega \phi(x)(\A^*g)(x/\eps) \dd \mu_\eps(x) \ \\
		& = \ddprb{\phi \otimes \A_\xi^*g \otimes \id_{E},\bm{\nu}}.
		\end{split}
		\end{equation}
		Using the tensor structure of the integrand and the property~\eqref{def:weighted} of the second-scale barycenters we obtain 
		\begin{align*}
		0 & = \int_{\cl\Omega}\int_{Z}  \dd \bbpr{\phi \otimes \A_\xi^* g \otimes \id_{\R^N},\bm{\nu}}_x(y) \dd (\Leb^d + \lambda^s)(x) \\
		& = \int_{\Omega} \phi(x) \bigg(\int_Z (\A_\xi^*g)(\xi) \dd \bbpr{\bm\nu}_x(\xi)\bigg) \dd (\Leb^d + \lambda^s)(x).
		\end{align*}
		Since the choice of $\phi \in \Crm^\infty_c(\Omega)$ was arbitrary, this identity must hold locally. Namely, for $(\Leb^d + \lambda^s)$-a.e.~$x_0 \in \Omega$ it holds
		\[
		\dprb{g,\A_\xi \bbpr{\bm{\nu}}_{x_0}} = 
		\int_{Z} (\A_\xi^*g)(\xi) \dd \bbpr{\bm{\nu}}_{x_0}(\xi) =0 \quad \text{for all $g \in \Crm^\infty(Z)$}.
		\]
		By the distributional definition of derivative this is equivalent to
		\[
		\A_\xi \bbpr{\bm{\nu}}_{x_0} = 0 \quad \text{in the sense of distributions on $Z$}.
		\]
		This proves the assertion.
	\end{proof}
	
	\begin{corollary}[differential rigidity of the second-scale]\label{cor:Afree} Let $\eps \searrow 0$ and let $(\mu_\eps)_\eps \subset \M(\Omega;E)$ be a sequence of asymptotically $\A$-free measures that two-scale converges to a limit $\lambda \otimes \theta_x$. That is, such that
		\[
		\text{$\A\mu_\eps \to 0$ \;\; strongly in $\Wrm^{-k,p}(\Omega)$}\,.
		\] 
		Then, at $\lambda$-almost every $x_0 \in \Omega$, it holds
		\[
		\A_\xi \theta_{x_0} = 0 \quad \text{in the sense of distributions on $Z$.}
		\]
	\end{corollary}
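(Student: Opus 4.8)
The plan is to avoid any appeal to Young measures and instead argue directly from the definition of two-scale convergence (Definition~\ref{def:tsc}), recycling the computation already performed in the proof of Proposition~\ref{prop:secondbarycenterisafree}. The key observation is that the first two equalities in the displayed chain~\eqref{eq:discarde} there do not use the Young-measure pairing at all: they rely only on the product rule, the convergence $\A\mu_\eps\to 0$ in $\Wrm^{-k,p}(\Omega)\cembed\Crm_0(\Omega)^*$, and the standing total-variation bound $\sup_\eps|\mu_\eps|(\Omega)<\infty$. Consequently, for every $\phi\in\Crm^\infty_c(\Omega)$ and every $g\in\Crm^\infty(Z;F)$,
\[
\lim_{\eps\searrow 0}\int_\Omega \phi(x)\,(\A^* g)(x/\eps)\dd\mu_\eps(x) = 0.
\]

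Next I would feed this into the hypothesis that $(\mu_\eps)_\eps$ two-scale converges to $\lambda\otimes\theta_x$. Reading the pairing componentwise, the map $(x,\xi)\mapsto\phi(x)(\A^* g)(\xi)$ (with $\phi$ extended by zero to $\cl\Omega$) is an admissible test function on $\cl\Omega\times Z$, so the limit above also equals $\int_{\cl\Omega}\phi(x)\bigl(\int_Z(\A_\xi^* g)(\xi)\dd\theta_x(\xi)\bigr)\dd\lambda(x)$. Since $\phi$ ranges over the dense subspace $\Crm^\infty_c(\Omega)\subset\Crm_0(\Omega)$, and since the integrand $x\mapsto\int_Z(\A_\xi^* g)\dd\theta_x$ belongs to $\Lrm^1(\lambda)$ --- because $(\lambda\otimes|\theta_x|)(\cl\Omega\times Z)<\infty$ by Corollary~\ref{cor:tsc} --- I obtain
\[
\dpr{g,\A_\xi\theta_{x_0}} = \int_Z (\A_\xi^* g)(\xi)\dd\theta_{x_0}(\xi) = 0 \qquad \text{for $\lambda$-a.e.\ $x_0\in\Omega$.}
\]

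The only genuine point to watch is that this $\lambda$-null exceptional set a priori depends on $g$, whereas we need a single null set valid for all test functions. I would handle this in the standard way: fix a countable family $\{g_j\}_{j\in\Nbb}\subset\Crm^\infty(Z;F)$ that is dense in $\Crm^k(Z;F)$ (for instance trigonometric polynomials with rational coefficients), let $N$ be the union of the countably many $\lambda$-null sets attached to the $g_j$ together with the $\lambda$-null set $\{x_0:|\theta_{x_0}|(Z)=\infty\}$, and verify that for $x_0\notin N$ the vanishing propagates to every $g\in\Crm^\infty(Z;F)$: choosing $g_j\to g$ in $\Crm^k$ gives $\A_\xi^* g_j\to\A_\xi^* g$ uniformly on $Z$, whence $\dpr{g,\A_\xi\theta_{x_0}}=\lim_j\dpr{g_j,\A_\xi\theta_{x_0}}=0$ since $|\theta_{x_0}|(Z)<\infty$. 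By the distributional definition of $\A_\xi$ this is precisely the statement that $\A_\xi\theta_{x_0}=0$ in $\Dcal'(Z;E)$ for $\lambda$-a.e.\ $x_0$. I do not expect any serious obstacle here: the computational core is already available from Proposition~\ref{prop:secondbarycenterisafree}, and the remainder is bookkeeping. (An alternative is to extract from $(\mu_\eps)_\eps$ a subsequence generating, via Theorem~\ref{thm:representation}, an $\A$-free two-scale* Young measure $\bm\nu$, apply Proposition~\ref{prop:secondbarycenterisafree} to get $\A_\xi\bbpr{\bm\nu}_{x_0}=0$, and identify $\lambda\otimes\theta_x$ with $(\Leb^d+\lambda^s)\otimes\bbpr{\bm\nu}_x$ by uniqueness of two-scale limits; but transporting the vanishing from $(\Leb^d+\lambda^s)$-a.e.\ to $\lambda$-a.e.\ points then needs an extra disintegration-uniqueness step, so the direct argument seems preferable.)
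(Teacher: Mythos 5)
Your argument is correct, but it follows a different route than the paper. The paper disposes of this corollary in one line: it invokes the fact (recorded in~\eqref{eq:1-3}) that a generating sequence two-scale converges to $(\Leb^d_\Omega+\lambda^s)\otimes\bbpr{\bm{\nu}}_x$ and then quotes Proposition~\ref{prop:secondbarycenterisafree}, so the differential rigidity is transported from the second-scale barycenter of the Young measure to the given two-scale limit. You instead bypass the Young measure entirely: you isolate the computational core of the proof of Proposition~\ref{prop:secondbarycenterisafree} (the rescaled test functions $\phi\,\eps^k(g\circ T^\eps)$, the product rule, the strong convergence $\A\mu_\eps\to 0$ in $\Wrm^{-k,p}$, and the mass bound), feed the resulting identity $\lim_\eps\int_\Omega\phi(x)(\A^*g)(x/\eps)\dd\mu_\eps(x)=0$ directly into Definition~\ref{def:tsc}, and then remove the $g$-dependence of the exceptional set by a countable dense family of smooth periodic test functions. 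What your route buys is self-containedness and a cleaner localization: the paper's one-line proof silently requires identifying the hypothesized limit $\lambda\otimes\theta_x$ with $(\Leb^d+\lambda^s)\otimes\bbpr{\bm{\nu}}_x$ and transferring a statement valid $(\Leb^d+\lambda^s)$-a.e.\ to one valid $\lambda$-a.e., i.e.\ a uniqueness-of-disintegration step that you correctly flag and avoid; what it costs is repeating the integration-by-parts computation rather than reusing the proposition as a black box. Two minor points: the uniform bound $\sup_\eps|\mu_\eps|(\Omega)<\infty$ is indeed needed (both for your step and for the paper's, which must extract a generating Young measure), so stating it as a standing hypothesis is appropriate; and the integrability of $x\mapsto|\theta_x|(Z)$ should not be sourced from Corollary~\ref{cor:tsc} (which constructs a particular limit rather than constraining an arbitrary one) but is simply part of what it means for $\lambda\otimes\theta_x$ to be a finite generalized product measure, or follows from the bound $|\int\!\!\int\Psi\dd\theta_x\dd\lambda|\le\sup_\eps|\mu_\eps|(\Omega)\,\|\Psi\|_\infty$. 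Neither point affects the validity of your proof.
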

	\begin{proof}
		This follows directly from~\eqref{eq:1-3} and the previous proposition.
	\end{proof}
	
	\begin{corollary}[structure of $\A$-free two-scale* Young measures]\label{cor:GuidoFilip}
		Let $\bm{\nu}\in \Y^2_\A(\Omega;E)$ be an $\A$-free two-scale* Young measure. Then, at $\lambda^s$-almost every $x_0 \in \Omega$, the following differential inclusion holds:
		\[
		\frac{\dd \bbpr{\bm{\nu}}_{x_0}}{\dd |\bbpr{\bm{\nu}}_{x_0}^s|}(\xi) \in \Lambda_{\A} \quad \text{for $|\bbpr{\bm{\nu}}_{x_0}^s|$-almost every $\xi \in Z$}.
		\]  
	\end{corollary}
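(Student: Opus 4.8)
The strategy is to apply the structure theorem for $\A$-free measures of De Philippis and Rindler~\cite{de-philippis2016on-the-structur} to the second-scale barycenter $\bbpr{\bm{\nu}}_{x_0}$, one slice at a time. The key input is already in hand: by Proposition~\ref{prop:secondbarycenterisafree}, since $\bm{\nu}$ is $\A$-free, the measure $\sigma_{x_0} \coloneqq \bbpr{\bm{\nu}}_{x_0} \in \M(Z;E)$ satisfies $\A_\xi \sigma_{x_0} = 0$ in $\Dcal'(Z;E)$ for $(\Leb^d + \lambda^s)$-almost every $x_0 \in \Omega$, hence in particular for $\lambda^s$-almost every $x_0$. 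Fix such an $x_0$; discarding a further $\lambda^s$-null set we may also assume $x_0 \in \sing_{\bm{\nu}}(\Omega)$, so that by Definition~\ref{def:bary2} the measure $\sigma_{x_0}$ is the finite $E$-valued measure on the torus with density $\xi \mapsto \dprb{\id_{\partial\Bbb_E},\nu_{x_0,\xi}^\infty}$ relative to $\rho_{x_0}$.

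Next I would pass from the torus to Euclidean space. Identifying $Z = \Tbb^d$ with $\R^d/\Zbb^d$ and letting $\tilde\sigma \in \M_\loc(\R^d;E)$ be the $Q$-periodic extension of $\sigma_{x_0}$, a standard partition-of-unity argument shows that $\A_\xi\sigma_{x_0} = 0$ on $Z$ is equivalent to $\A\tilde\sigma = 0$ in $\Dcal'(\R^d;E)$, since periodicity reduces arbitrary test fields in $\Crm^\infty_c(\R^d;F)$ to periodic ones; alternatively one may simply invoke the locality of the structure theorem and argue in coordinate charts covering $Z$. The theorem of~\cite{de-philippis2016on-the-structur} then yields
\[
\frac{\dd\tilde\sigma}{\dd|\tilde\sigma|}(\xi) \in \Lambda_{\A} \quad \text{for $|\tilde\sigma^s|$-almost every $\xi \in \R^d$.}
\]
Since on the concentration set of $\tilde\sigma$ the polar relative to $|\tilde\sigma|$ coincides with the polar relative to $|\tilde\sigma^s|$, and since $\tilde\sigma$ is $Q$-periodic, this descends to
\[
\frac{\dd\bbpr{\bm{\nu}}_{x_0}}{\dd|\bbpr{\bm{\nu}}_{x_0}^s|}(\xi) \in \Lambda_{\A} \quad \text{for $|\bbpr{\bm{\nu}}_{x_0}^s|$-almost every $\xi \in Z$,}
\]
which is the assertion.

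This corollary is essentially a combination of two results already available, so I do not expect a genuine obstacle; the points requiring attention are purely bookkeeping. One must check that the $\lambda^s$-null exceptional set may be taken as the union of the null set coming from Proposition~\ref{prop:secondbarycenterisafree} and the set $\{x_0 \notin \sing_{\bm{\nu}}(\Omega)\}$, and one must reconcile the compact-manifold formulation ``$\A_\xi$-free on $Z$'' with the Euclidean hypotheses under which~\cite{de-philippis2016on-the-structur} is stated; both are routine once one notes that the sought conclusion is local in $\xi$. In particular, no new blow-up or compactness argument beyond the one already carried out for Proposition~\ref{prop:secondbarycenterisafree} is needed.
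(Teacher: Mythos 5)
Your proposal is correct and follows essentially the same route as the paper: invoke Proposition~\ref{prop:secondbarycenterisafree} to get that $\bbpr{\bm{\nu}}_{x_0}$ is $\A_\xi$-free for $\lambda^s$-a.e.\ $x_0$, and then apply \cite[Theorem~1.1]{de-philippis2016on-the-structur} using the fact that $\A_\xi$ and $\A$ coincide locally (the paper compresses your periodic-extension/chart argument into exactly this one-line locality remark). Your extra bookkeeping about the exceptional null set and the polar taken with respect to $|\bbpr{\bm{\nu}}_{x_0}^s|$ versus $|\bbpr{\bm{\nu}}_{x_0}|$ is sound and harmless.
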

	\begin{proof}
		By Proposition \ref{prop:secondbarycenterisafree} we have that for $(\Leb^d + \lambda^s)$-almost every $x_0 \in \Omega$, the second-scale barycenter  $\bbpr{\bm{\nu}}_{x_0}$ is an $\A_\xi$-free measure on $Z$. Since locally, $\A_\xi$ and $\A$ coincide as operators in $\R^d$, it follows from \cite[Theorem~1.1]{de-philippis2016on-the-structur} that
		\[
		\frac{\dd \bbpr{\bm{\nu}}_{x_0}}{\dd |\bbpr{\bm{\nu}}_{x_0}^s| }(\xi)    \in \Lambda_{\A} \text{ for } |\bbpr{\bm{\nu}}_{x_0}^s| \text{-a.e. $\xi \in Z$.}
		\]
		This finishes the proof. 
	\end{proof}
	
	The next lemma asserts the support of the purely singular part of an $\A$-free measure cannot be arbitrary. In fact, it must be contained in the smallest vectorial space containing the wave cone $\Lambda_{\A}$.
	
	\begin{lemma}\label{lem:support} Let $\bm{\nu} = (\nu,\lambda,\rho,\nu^\infty) \in \Y^2(\Omega;E)$ be an $\A$-free two-scale* Young measure. 
		Then the support of the purely singular part of $\bm\nu$ is contained in $\spn \{\Lambda_\A\}$, that is, 
		\[
		\supp \, (\nu^\infty_{x, \xi}) \subset \spn \{\Lambda_{\A}\} \cap \partial \Bbb_E \quad \text{for $(\lambda^s \otimes \rho_x)$-almost every $(x,\xi) \in {\Omega} \times Z$}\,. 
		\]  	
	\end{lemma}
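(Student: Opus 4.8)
The plan is to reduce the statement to the structure theorem for $\A$-free measures via the localization principle at singular points. The key observation is that the singular part of $\bm\nu$ is localized by Proposition~\ref{prop:loc_singular}: at $\lambda^s$-almost every $x_0 \in \Omega$ there is a tangent two-scale* Young measure $\bm{D\nu(x_0)} = (\delta_0, D\lambda, \Gamma^{\xi_0}_\#\rho_{x_0}, \nu^\infty_{x_0,\xi_0+\sbullet})$ generated by a blow-up sequence $(\gamma_{j(i),j})_i$ of the (asymptotically) $\A$-free generating sequence $(\mu_{\eps_i})_i$. First I would check that these blow-ups are still asymptotically $\A$-free: since $\A$ is $k$-homogeneous, the rescaling $T^{(x_0,r_j)}_\#$ together with the normalization factor $c_j = \lambda^s(Q_{r_j}(x_0))^{-1}$ transforms $\A\mu_{\eps_i}$ into $c_j r_j^k T^{(x_0,r_j)}_\#(\A\mu_{\eps_i})$, whose $\Wrm^{-k,p}$-norm is controlled by $c_j r_j^{k} r_j^{d/p'} \|\A\mu_{\eps_i}\|_{\Wrm^{-k,p}(\Omega)}$ (up to constants, using the scaling of the negative Sobolev norm), and this tends to zero as $i\to\infty$ for each fixed $j$. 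Passing to a suitable diagonal subsequence one obtains that $\bm{D\nu(x_0)}$ is itself an $\A$-free two-scale* Young measure in $\Y^2_\A(Q;E)$.

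**Key steps.** Once the tangent measure is known to be $\A$-free, Proposition~\ref{prop:secondbarycenterisafree} applies to it: its second-scale barycenter $\bbpr{\bm{D\nu(x_0)}}_y$ is $\A_\xi$-free on $Z$ for $(\Leb^d + D\lambda)$-almost every $y$. Because $D\nu_{y,\xi} = \delta_0$ and $D\lambda$ is purely singular with respect to $\Leb^d$ (it belongs to $\Tan_1(\lambda^s,x_0)$), Definition~\ref{def:bary2} gives, for $D\lambda$-a.e.\ $y$,
\[
\bbpr{\bm{D\nu(x_0)}}_y(V) = \int_V \dprb{\id_{\partial\Bbb_E}, \nu^\infty_{x_0,\xi_0+\xi}}\dd(\Gamma^{\xi_0}_\#\rho_{x_0})(\xi), \qquad V \subset Z \text{ Borel},
\]
which is independent of $y$; call this measure $m_{x_0} \in \M(Z;E)$. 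Thus $\A_\xi m_{x_0} = 0$ in $\Dcal'(Z;E)$. Now invoke \cite[Theorem~1.1]{de-philippis2016on-the-structur} (as in Corollary~\ref{cor:GuidoFilip}): the polar of the singular part of $m_{x_0}$ lies in $\Lambda_\A$, hence $m_{x_0}$ itself — whether or not it has an absolutely continuous part — takes values, in an averaged sense against $\rho_{x_0}$, in the linear span $\spn\{\Lambda_\A\}$. Concretely, since $m_{x_0} = \Gamma^{\xi_0}_\#\big(\dprb{\id_{\partial\Bbb_E},\nu^\infty_{x_0,\sbullet}}\,\rho_{x_0}\big)$, the $\rho_{x_0}$-barycenter $\xi \mapsto \dprb{\id_{\partial\Bbb_E},\nu^\infty_{x_0,\xi}}$ belongs to $\spn\{\Lambda_\A\}$ for $\rho_{x_0}$-a.e.\ $\xi$.

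**From barycenters to supports.** The remaining point is to upgrade the statement about the barycenter of $\nu^\infty_{x_0,\xi}$ to a statement about its \emph{support}. Here I would use that $\spn\{\Lambda_\A\}$ is a linear subspace and a slicing/selection argument applied to an arbitrary linear functional $\ell \in E^*$ vanishing on $\spn\{\Lambda_\A\}$: one wants $\dprb{\ell, \nu^\infty_{x_0,\xi}\text{-a.e.\ point}} = 0$. The trick, standard in this circle of ideas, is to exploit the $1$-homogeneity of the recession framework by running the argument not just for the generating sequence but for its compositions with homogeneous truncations, or — more cleanly — to apply the already-established barycenter inclusion to every tangent measure of $\bm\nu$ along all possible blow-ups and localization at the support points; since the set of tangent measures is rich enough and the constraint $\spn\{\Lambda_\A\}$ is closed and linear, an iterated localization forces the full support of $\nu^\infty_{x_0,\xi}$ into $\spn\{\Lambda_\A\}\cap\partial\Bbb_E$. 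I expect this last passage — rigorously going from ``barycenter lies in the subspace'' to ``support lies in the subspace'' — to be the main obstacle, since it requires either a second localization at the level of the sphere $\partial\Bbb_E$ or an argument that a probability measure on the sphere whose barycenter and whose all blow-up barycenters lie in a fixed subspace must be supported there; the cleanest route is probably to observe that $\spn\{\Lambda_\A\}$ equals the kernel of the family $\{\Abb^k(\eta)^* : \eta \ne 0\}^\perp$'s annihilator and then test $\ddprb{\phi\otimes g\otimes (\ell(\sbullet))^2, \bm\nu}$-type quantities, whose non-negativity forces the concentration.
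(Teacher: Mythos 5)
There is a genuine gap, and it is exactly the step you flag yourself: passing from a \emph{barycenter} inclusion to a \emph{support} inclusion. The paper does not attempt to deduce this from barycenter information alone; it reduces to one-scale objects: the generating sequence also generates a generalized Young measure $\bm{\upsilon}=(\upsilon,\lambda,\upsilon^\infty)$ whose fibre $\upsilon^\infty_x$ is the $\rho_x$-mixture of the family $\{\nu^\infty_{x,\xi}\}_\xi$; it then applies the singular localization principle for $\A$-free \emph{generalized} Young measures and the De Philippis--Rindler theorem to the macroscopic barycenter to see that the tangent object has barycenter valued in $\spn\{\Lambda_\A\}$, and finally invokes the support-rigidity result \cite[Lemma~3.2]{arroyo-rabasa2017lower-semiconti}, which is precisely the nontrivial lemma converting the barycenter constraint into $\supp\,\upsilon^\infty_{x_0}\subset\spn\{\Lambda_\A\}\cap\partial\Bbb_E$ for $\A$-free tangent Young measures; the statement for $\nu^\infty_{x,\xi}$ then follows from the mixture identity and positivity. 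Your proposal has no substitute for that lemma. The devices you sketch do not work: an integrand of the form $\phi\otimes g\otimes(\ell(\frarg))^2$ has quadratic growth and lies outside $\E^2(\Omega;E)$, so it cannot be tested against $\bm{\nu}$; and it is simply false that a probability measure on $\partial\Bbb_E$ whose barycenter (even along all blow-ups) lies in a subspace must be supported in that subspace --- any measure symmetric about $\spn\{\Lambda_\A\}$ is a counterexample. The support inclusion genuinely uses the $\A$-freeness of the generating sequence at the level of concentrations, not just of barycenters.

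There is also an incorrect intermediate inference in your route. From $\A_\xi m_{x_0}=0$ with $m_{x_0}=\Gamma^{\xi_0}_\#\bigl(\dprb{\id_{\partial\Bbb_E},\nu^\infty_{x_0,\frarg}}\,\rho_{x_0}\bigr)$, the theorem of De Philippis--Rindler constrains only the polar of the \emph{singular} part of $m_{x_0}$ with respect to $\Leb^d_Z$; it says nothing about its absolutely continuous part (constant densities are always $\A_\xi$-free). Since $\rho_{x_0}$ may well be absolutely continuous on $Z$ (e.g.\ $\rho_{x_0}=\Leb^d_Z$ when the concentration is diffuse in the second scale, as in Example~\ref{expl:Singularpart}(b)), the claimed conclusion that $\dprb{\id_{\partial\Bbb_E},\nu^\infty_{x_0,\xi}}\in\spn\{\Lambda_\A\}$ for $\rho_{x_0}$-a.e.\ $\xi$ does not follow. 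The paper avoids this by applying the structure theorem to the $\A$-free barycenter $[\bm{\upsilon}]$ on $\Omega$ at $\lambda^s$-a.e.\ point, where the relevant density is genuinely singular. (A further, minor, inaccuracy: $D\lambda\in\Tan_1(\lambda^s,x_0)$ need not be purely singular with respect to $\Leb^d$ --- tangent measures of singular measures can be absolutely continuous --- though this particular point can be repaired.)
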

	\begin{proof}
		Recall that if $(\mu_\eps)_\eps$ generates the two-scale* Young measure $\bm \nu$, then the same sequence generates the generalized Young measure $\bm\upsilon = (\upsilon,\lambda,\upsilon^\infty)$ where $\upsilon^\infty$ is the weak-$*$ $\lambda$-measurable map $x \mapsto \upsilon_x^\infty \in \mathrm{Prob}(\partial \Bbb_E)$ and each probability measure $\upsilon_x$ is defined by duality as   
		\begin{equation}\label{eq:com3}
		\dprb{h^\infty,\upsilon_x^\infty} = \int_Z \bigg(\int_{\partial \Bbb_E} h^\infty(\hat z) \dd \nu^\infty_{x, \xi}(\hat z)\bigg) \dd \rho_x(\xi) \quad \text{for all $h^\infty \in \E^0(E)$}\,.
		\end{equation}
		Moreover, if $(\mu_\eps)_\eps$ is originally an asymptotically $\A$-free sequence, then by definition $\bm{\upsilon}$ is an $\A$-free generalized Young measure (see~\cite{arroyo-rabasa2017lower-semiconti} for the corresponding definition). The localization principle~\cite[Proposition~2.25]{arroyo-rabasa2017lower-semiconti} (for generalized Young measures at singular points) yields the existence of a set $S \subset \Omega$ with full $\lambda^s$-measure and satisfying the following property: at each $x_0 \in S$ there exists an $\A$-free sequence $(u_\eps)_\eps \subset \Lrm^1(Q;E)$ ---depending on the point $x_0$--- such that 
		\begin{equation}\label{eq:com1}
		u_\eps \toY  \bm{\upsilon(x_0)} = (\delta_0,\lambda_{x_0},\upsilon_{x_0}) \in \Y^1(Q;E), \quad \tilde \lambda_{x_0}(\partial Q) = 0,  
		\end{equation}
		and $\tilde \lambda_{x_0} \in \Tan_1(\lambda,x_0)$ is a probability measure (hence a non-zero positive measure). Moreover, by Theorem~\cite[Theorem~1.1]{de-philippis2016on-the-structur}, we may further assume 
		${\dprb{\id_{\partial \Bbb_E},\upsilon_{x_0}}} = {|\dprb{\id_{\partial \Bbb_E},\upsilon_{x_0}}|} \cdot {\dd [\upsilon]}/{\dd |[\upsilon]|^s}(x_0) \in \Lambda_\A$. 
		In particular
		\begin{equation}\label{eq:com2}
		[\bm{\upsilon(x_0)}] \in \M(Q;\spn\{\Lambda_\A\}) \text{\quad for every $x_0 \in S$}\,. 
		\end{equation}
		
		By properties~\eqref{eq:com1}-\eqref{eq:com2} we may apply~\cite[Lemma~3.2]{arroyo-rabasa2017lower-semiconti} to each generalized young measure $\bm{\upsilon(x_0)}$. Using~\eqref{eq:com2} once more we deduce 
		\[
		\text{$\supp\,(\upsilon_{x_0}^\infty) \subset \spn\{\Lambda_\A\} \cap \partial \Bbb_E$ for every $x_0 \in S$}\,.
		\]  
		Hence, by~\eqref{eq:com3}, $\supp\,(\nu^\infty_{x,\xi}) \subset  \spn\{\Lambda_\A\} \cap \partial \Bbb_E$ for $(\lambda^s \otimes \rho_x)$-almost every $(x,\xi) \in Q \times Z$. This finishes the proof. 
	\end{proof}

	In the introduction we have defined the $\A$-free homogeneous envelope for integrands $\Crm(Z \times E)$ which are convex in their second argument. This definition is nothing else than a simplified representation of the (general) definition of $\A$-free homogeneous envelope defined for arbitrary integrands: 
	
	\begin{definition}
		Let $h : Z \times E \to \R$ be a continuous integrand. The $\A$-free homogeneous envelope of $h$ is the integrand 
		\begin{equation}\label{eq:hom}
		\begin{split}
		h_\Ahom(z) & \coloneqq \inf\setBB{\dashint_{Q_R} h(y,z + w(y)) \dd y }{ R \in \Nbb, \\
			& \qquad  \quad \qquad  w \in \Crm_\per^\infty(Q_R;E) \cap \ker \A, \int_{Q_R} w \dd y = 0}\,, \quad z \in E\,.
		\end{split}
		\end{equation}
	\end{definition}
	The following relation about the commutability of the recession operation and the homogenization of an integrand holds. 
	
	\begin{proposition}[recession regularization vs. homogenization] \label{prop:exinftyhom}
		Let $h \in \Crm(Z \times \R^N)$ be an integrand with linear-growth at infinity. Then
		\[
		(h^\#)_{\Ahom} \ge (h_{\Ahom})^\#. 
		\]
		\begin{proof}
			Fix a vector $z \in E$. Let also $R \in \Nbb$ and $w \in \Crm_\per^\infty(Q_R;E)$ as in~\eqref{eq:hom}. Recall that $h(\xi,t \frarg)/t \le M(1 + |\frarg|) \in \Lrm^1_\loc(\R^d)$, hence we may use Fatou's lemma and the definition of $h^\#$ to obtain
			\begin{equation}\label{eq:inequality}
			\begin{split}
			\dashint_{Q_R} h^\#(\xi,A + w(y))\dd y  & =   \dashint_{Q_R} \limsup_{t \to \infty} \frac{h(y,tz + tw(y))}{t} \dd y \\
			& \geq \limsup_{t \to \infty} \dashint_{Q_R} \frac{h(y,tz + tw(y))}{t} \dd y \\
			& \geq  \limsup_{t \to \infty} \frac{h_{\Ahom}(tz)}{t} = (h_{\Ahom})^\#(z).
			\end{split}
			\end{equation}
			In passing to the first inequality we have used the linearity of both $\A$ and the mean value operation to ensure that $tw \in \Crm^\infty(Q_R;E) \cap \ker \A$, $\int_{Q_R} tw = 0$.
			Taking the infimum over such $w$'s first, and subsequently over all $R \in \Nbb$ in~\eqref{eq:inequality} gives 
			\[
			(h^\#)_{\Ahom}(z) \ge (h_{\Ahom})^\#(z),
			\] 
			as desired.
		\end{proof}
	\end{proposition}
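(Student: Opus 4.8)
The plan is to establish the pointwise inequality at an arbitrary fixed $z \in E$ by pushing a competitor for $h_{\Ahom}$ through the recession limit and then recognising the resulting bound as a competitor for $(h^\#)_{\Ahom}$. Accordingly, I would first fix $z \in E$, an integer $R \in \Nbb$, and an admissible field $w \in \Crm^\infty_\per(Q_R;E) \cap \ker \A$ with $\int_{Q_R} w \dd y = 0$. The key elementary observation is that $tw$ is again admissible for every $t > 0$: linearity of $\A$ gives $tw \in \ker \A$ and linearity of the mean gives $\int_{Q_R} tw \dd y = 0$. Hence, by the definition~\eqref{eq:hom} of the infimum,
\[
\frac{h_{\Ahom}(tz)}{t} \;\le\; \dashint_{Q_R} \frac{h\bigl(y, t(z + w(y))\bigr)}{t} \dd y \qquad \text{for all } t > 0.
\]

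Next I would take $\limsup_{t \to \infty}$ on both sides. On the left the limit superior equals $(h_{\Ahom})^\#(z)$, directly from the definition of the (upper) recession function of the $\xi$-independent integrand $h_{\Ahom}$. On the right I would apply the reverse Fatou lemma, which applies because the linear-growth assumption supplies a $t$-uniform majorant: since $h(\xi, t\zeta)/t \le M(1 + |\zeta|)$, the function $y \mapsto M\bigl(1 + |z + w(y)|\bigr)$ lies in $\Lrm^1(Q_R)$ and dominates the integrands $y \mapsto h(y, t(z+w(y)))/t$ for all $t \ge 1$. This gives
\[
\limsup_{t \to \infty} \dashint_{Q_R} \frac{h\bigl(y, t(z + w(y))\bigr)}{t} \dd y \;\le\; \dashint_{Q_R} \limsup_{t \to \infty} \frac{h\bigl(y, t(z + w(y))\bigr)}{t} \dd y \;\le\; \dashint_{Q_R} h^\#\bigl(y, z + w(y)\bigr) \dd y,
\]
where the last step uses that $h^\#(\xi,\zeta)$ is by definition the limit superior over both $\xi' \to \xi$ and $t \to \infty$ of $h(\xi', t\zeta)/t$, hence dominates the limit superior in $t$ alone with $\xi' = \xi$ frozen; the integral on the right is well defined because $h^\#$, being the upper semicontinuous regularisation of $Th$, is a Borel integrand. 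Combining the two displays yields $(h_{\Ahom})^\#(z) \le \dashint_{Q_R} h^\#(y, z + w(y)) \dd y$.

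Finally I would take the infimum of the right-hand side, first over all admissible $w$ for the fixed $R$ and then over all $R \in \Nbb$. Since the class of admissible test fields appearing in the definition of $(h^\#)_{\Ahom}$ coincides with the one used for $h_{\Ahom}$, this infimum is exactly $(h^\#)_{\Ahom}(z)$, so we conclude $(h_{\Ahom})^\#(z) \le (h^\#)_{\Ahom}(z)$; as $z \in E$ was arbitrary, the proposition follows.

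The only genuinely delicate point I anticipate is the reverse Fatou step — namely verifying that the linear-growth bound supplies a $t$-uniform integrable majorant on each cube $Q_R$ and that $h^\#$ is an admissible (Borel) integrand — while the remainder of the argument is routine bookkeeping built on the linearity of $\A$ and of the averaging operation.
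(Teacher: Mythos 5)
Your proposal is correct and follows essentially the same route as the paper: you test $h_{\Ahom}(tz)$ with the scaled competitor $tw$ (admissible by linearity of $\A$ and of the mean), pass to the limit superior via the reverse Fatou lemma with the linear-growth majorant $y \mapsto M(1+|z+w(y)|)$, bound the pointwise limsup by $h^\#(y,z+w(y))$, and then take infima over $w$ and $R$. If anything, your write-up is slightly more careful than the paper's (you note that $h^\#$ only dominates, rather than equals, the frozen-$\xi$ limsup, and you check Borel measurability of $h^\#$), but the argument is the same.
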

	
	\section{Convex homogenization}\label{sec:6}

	\subsection{Jensen-type inequalities} 
	This section is devoted to the study of (Jensen) integral inequalities satisfied by the {$\A$}-homogeneous envelope of convex integrands with respect to arbitrary $\A$-free two-scale* Young measures. The plan is to establish Jensen type inequalities at the first- and second-scale; naturally involving the first and second barycenters. Bridging these two inequalities into an \emph{homogenized}  Jensen inequality is, in turn, the key argument towards the proof of Theorem~\ref{thm:hom}. We close this section with an open problem and a discussion about non-convex integrands.
	
	\subsubsection{First-scale Jensen's inequality}
	\begin{proposition}\label{lem:J1}
		Let $\bm{\nu}= (\nu,\lambda,\rho,{\nu}^\infty) \in \Y^2(\Omega;E)$ be an $\A$-free two-scale* Young measure and let $h \in \Crm(Z \times E) \cap \Rbf^2(\Omega;E)$ be an integrand that is convex in its second argument, that is, $h(\xi,\frarg)$ is convex for all $\xi \in Z$. Then,
		\begin{enumerate}[(i)]
			\item at every regular point $x \in \reg_{\bm{\nu}}(\Omega)$ it holds 
			\begin{equation}\label{eq:J1r}
			\begin{split}
			h_{*\A}\bigg(\frac{\dd \bpr{\bm{\nu}}}{\dd \Leb^d}(x)\bigg) \leq \int_Z & h\bigg(\xi,\frac{\dd \bbpr{\bm{\nu}}_x}{\dd \Leb^d}(\xi)\bigg) \dd \xi  \\ 
			& + \int_Z h^\infty\bigg(\xi,\frac{\dd \bbpr{\bm{\nu}}^s}{\dd \rho_x}(\xi)\bigg) \dd \rho_x (\xi)\,,
			\end{split}
			\end{equation}
			\item and, at every singular point $x \in \sing_{\bm{\nu}}(\cl\Omega)$,
			\begin{equation}\label{eq:J1s}
			(h^\infty)_{*\A}\bigg(\frac{\dd \bpr{\bm{\nu}}_x}{\dd \lambda^s}(x)\bigg) \le \int_Z h^\infty\bigg(\xi,\frac{\dd \bbpr{\bm{\nu}}_x}{\dd \rho_x}(\xi)\bigg) \dd \rho_x(\xi)\,.
			\end{equation}
		\end{enumerate}
	\end{proposition}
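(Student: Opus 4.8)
The plan is to reduce both items to a single observation: by Proposition~\ref{prop:secondbarycenterisafree} the second-scale barycenter $\bbpr{\bm{\nu}}_x$ is, at $(\Leb^d+\lambda^s)$-almost every $x$, an $\A_\xi$-free $E$-valued measure on the torus $Z$, and a (mollified) $\A_\xi$-free measure on $Z$ is an admissible competitor in the cell problem~\eqref{eq:defhom2} defining the $\A$-homogeneous envelope. Letting the mollification parameter vanish through Reshetnyak's continuity theorem then reproduces the right-hand sides of~\eqref{eq:J1r}--\eqref{eq:J1s}, the recession function $h^\infty$ supplying the concentration term.

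For (i) I would fix a regular point $x$ at which Proposition~\ref{prop:secondbarycenterisafree} and the density identity~\eqref{eq:density2} both hold, and set $\theta\coloneqq\bbpr{\bm{\nu}}_x\in\M(Z;E)$, so that $\A_\xi\theta=0$ and $\theta(Z)=\tfrac{\dd[\bm{\nu}]}{\dd\Leb^d}(x)$. Using a periodic mollifier $(\psi_\delta)_{\delta>0}$ on $Z$, I would put $w_\delta\coloneqq\theta\conv\psi_\delta-\theta(Z)$; since $\A$ has constant coefficients, $\A_\xi w_\delta=(\A_\xi\theta)\conv\psi_\delta=0$, while $w_\delta$ is smooth, $Q$-periodic and mean-free, hence admissible in~\eqref{eq:defhom2}, so that
\[
h_{*\A}\big(\theta(Z)\big)\ \le\ \int_{Q}h\big(\xi,\theta(Z)+w_\delta(\xi)\big)\dd\xi\ =\ \int_{Z}h\big(\xi,(\theta\conv\psi_\delta)(\xi)\big)\dd\xi .
\]
From $\abs{\theta\conv\psi_\delta}\le\abs{\theta}\conv\psi_\delta$ one gets $\int_Z\abs{\theta\conv\psi_\delta}\dd\xi\le\abs{\theta}(Z)$, which together with weak-$*$ lower semicontinuity of the total variation shows that $\theta\conv\psi_\delta\to\theta$ \emph{strictly}. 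As $h\in\Rbf^2(\Omega;E)$ has a continuous strong recession $h^\infty$, Reshetnyak's continuity theorem then yields, as $\delta\to0$,
\[
\int_{Z}h\big(\xi,(\theta\conv\psi_\delta)(\xi)\big)\dd\xi\ \longrightarrow\ \int_{Z}h\big(\xi,\theta^{ac}(\xi)\big)\dd\xi+\int_{Z}h^\infty\Big(\xi,\tfrac{\dd\theta^s}{\dd\abs{\theta^s}}(\xi)\Big)\dd\abs{\theta^s}(\xi).
\]
It remains to translate the right-hand side into the notation of~\eqref{eq:J1r}: by Definition~\ref{def:bary2} one has $\theta^{ac}=\tfrac{\dd\bbpr{\bm{\nu}}_x}{\dd\Leb^d}$ and $\theta^s=\lambda^{ac}(x)\,\dprb{\id_{\partial\Bbb_E},\nu_{x,\xi}^\infty}\,\rho_x^s$, so positive $1$-homogeneity of $h^\infty(\xi,\frarg)$ --- together with $h^\infty(\xi,0)=0$, which discards the $\rho_x^{ac}$-contribution --- turns the last integral into $\int_Z h^\infty\big(\xi,\tfrac{\dd\bbpr{\bm{\nu}}_x^s}{\dd\rho_x}(\xi)\big)\dd\rho_x(\xi)$, which is exactly~\eqref{eq:J1r}.

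For (ii) I would repeat the argument verbatim at a singular point $x$, but with $h$ replaced throughout by $h^\infty$: this integrand is continuous, convex in its second argument, positively $1$-homogeneous, hence again lies in $\Rbf^2(\Omega;E)$ with $(h^\infty)^\infty=h^\infty$, and at such $x$ one has $\theta=\bbpr{\bm{\nu}}_x=\dprb{\id_{\partial\Bbb_E},\nu_{x,\xi}^\infty}\rho_x$, once more $\A_\xi$-free by Proposition~\ref{prop:secondbarycenterisafree}. The mollification and Reshetnyak steps give $(h^\infty)_{*\A}(\theta(Z))\le\int_Z h^\infty(\xi,\theta^{ac})\dd\xi+\int_Z h^\infty(\xi,\tfrac{\dd\theta^s}{\dd\abs{\theta^s}})\dd\abs{\theta^s}$, and since $\theta^{ac}=\dprb{\id_{\partial\Bbb_E},\nu_{x,\xi}^\infty}\rho_x^{ac}$ and $\theta^s=\dprb{\id_{\partial\Bbb_E},\nu_{x,\xi}^\infty}\rho_x^s$, $1$-homogeneity collapses the right-hand side to $\int_Z h^\infty\big(\xi,\dprb{\id_{\partial\Bbb_E},\nu_{x,\xi}^\infty}\big)\dd\rho_x(\xi)=\int_Z h^\infty\big(\xi,\tfrac{\dd\bbpr{\bm{\nu}}_x}{\dd\rho_x}(\xi)\big)\dd\rho_x(\xi)$, i.e.~\eqref{eq:J1s}.

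The admissibility check for $w_\delta$ and the $1$-homogeneous bookkeeping are routine; the delicate point is the passage $\delta\to0$ --- establishing strict convergence $\theta\conv\psi_\delta\to\theta$ and invoking Reshetnyak continuity in precisely the form that rebuilds the $h^\infty$ concentration term. This is where membership of $h$, and of $h^\infty$, in $\Rbf^2(\Omega;E)$ (so that the strong recession is a genuine continuous limit, not merely the inequality $h^\#\ge h_\#$) is indispensable; if only the upper/lower recessions were available one would at best obtain the weaker inequality with $h^\#$ on the right.
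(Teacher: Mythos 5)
Your proposal follows essentially the same route as the paper's proof: mollify the second-scale barycenter $\theta=\bbpr{\bm{\nu}}_x$, use its $\A_\xi$-freeness (Proposition~\ref{prop:secondbarycenterisafree}, equivalently Corollary~\ref{cor:Afree}) to make the mean-free mollification an admissible competitor in the cell formula, pass to the limit in $\delta$, and finish with the positive $1$-homogeneity bookkeeping; your treatment of (ii) with $h^\infty$ in place of $h$, and the identification of $\theta^{ac}$ and $\theta^s$ in terms of $\nu$, $\nu^\infty$, $\lambda^{ac}(x)$ and $\rho_x$, coincide with the paper's.

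The one step whose justification does not hold up as written is the passage $\delta\to 0$ in part (i). Strict convergence of $\theta\ast\psi_\delta$ to $\theta$ (weak-$*$ convergence plus convergence of the total variations), which your total-variation estimate correctly establishes, is \emph{not} sufficient to invoke a Reshetnyak-type continuity theorem for a non-$1$-homogeneous integrand $h$ with recession $h^\infty$: on the one-dimensional torus the nonnegative densities $1+\sin(2\pi j\xi)$ converge strictly to the constant density $1$ (all masses equal $1$), yet for strictly convex $h$ one has $\int_Z h\big(1+\sin(2\pi j\xi)\big)\dd\xi \to \int_0^1 h\big(1+\sin(2\pi t)\big)\dd t > h(1)$, so the upper bound you need in the limit can genuinely fail under strict convergence alone; only lower semicontinuity survives, which is the wrong direction for your chain of inequalities. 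The correct ingredient is the one the paper uses: mollified measures converge \emph{area-strictly} (\cite[Remark~2.2]{arroyo-rabasa2017lower-semiconti}), and the functional $\mu\mapsto\int_Z h(\xi,\mu^{ac})\dd\xi+\int_Z h^\infty\big(\xi,\dd\mu^s/\dd|\mu^s|\big)\dd|\mu^s|$ is continuous along area-strictly convergent sequences (\cite[Theorem~5]{kristensen2010characterizatio}); proving area-strict convergence of mollifications requires a Jensen-type estimate on the area functional rather than the bound $|\theta\ast\psi_\delta|\le|\theta|\ast\psi_\delta$ you gave. With that substitution your argument is complete; for part (ii) no change is needed, since $h^\infty$ is positively $1$-homogeneous and the classical Reshetnyak continuity theorem under strict convergence does apply there.
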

	\begin{proof}
		Let  $\phi$ be a non-negative mollifier on $Z$ with $\int_Z \phi = 1$. Set $\phi_\delta \coloneqq \delta^{-d}\phi(x/\frarg)$ so that $\phi_\delta \Leb^d_Z$ is a probability measure {on $Z$}. 
		We define, for fixed $\delta > 0$ and $x \in \Omega$, the mollified second-order barycenter 
		\[
		v_\delta(\xi) \coloneqq (\phi_\delta \ast \bbpr{\bm{\nu}}_x)(\xi)\,. 
		\]
		Recall from  \cite[Remark 2.2]{arroyo-rabasa2017lower-semiconti} that convergence of mollified measures is strengthened to area-convergence, that is, $v_\delta$ area-converges to $\bbpr{\nu}_x$ on $Z$ (as $\delta \todown 0$). Hence, by \cite[Theorem~5]{kristensen2010characterizatio} we obtain 
		\begin{equation}\label{eq:join1}
		\int_Z h(\xi,v_\delta(\xi)) \dd \xi \to \int_Z h\bigg(\xi,\frac{\dd \bbpr{\bm{\nu}}_x}{\dd \Leb^d}(\xi)\bigg) 
		+ \int_Z h^\infty\bigg(\xi,\frac{\dd \bbpr{\bm{\nu}}^s_x}{\dd |\bbpr{\bm{\nu}}_x^s|}(\xi)\bigg) \dd |\bbpr{\bm{\nu}}_x^s|(\xi)\,.
		\end{equation}
		On the other hand, by the properties of mollifiers and the differential rigidity of the second-scale barycenter proved in Corollary~\ref{cor:Afree}, it holds that every test function $\tilde v_\delta \coloneqq v_\delta - \bbpr{\bm{\nu}}_x(Z)$ is $\A_\xi$-free and has zero mean value. This property of the $\tilde v_\delta$'s enables us to use the definition of the $\A$-homogeneous envelope (of a convex integrand) which yields 
		\begin{equation}\label{eq:join2}
		\int_Z h(\xi,v_\delta(\xi)) \dd \xi = \int_Z h(\xi,\tilde v_\delta(\xi) + \bbpr{\bm{\nu}}_x(Z)) \dd \xi \ge h_{*\A}(\bbpr{\bm{\nu}}_x(Z))\,.
		\end{equation}
		From \eqref{eq:join1} and~\eqref{eq:join2} we conclude 
		\begin{equation*}
		\int_Z h\bigg(\xi,\frac{\dd \bbpr{\bm{\nu}}_x}{\dd \Leb^d}(\xi)\bigg) 
		+ \int_Z h^\infty\bigg(\xi,\frac{\dd \bbpr{\bm{\nu}}^s_x}{\dd |\bbpr{\bm{\nu}}_x^s|}(\xi)\bigg) \dd |\bbpr{\bm{\nu}}_x^s|(\xi) \ge h_{*\A}(\bbpr{\bm{\nu}}_x(Z))\,.
		\end{equation*}
		Thus, taking into account that $|\bbpr{\bm{\nu}}_x^s| = |\dpr{\id_{\partial \Bbb_E},\nu^\infty_{x,\xi}}| \rho_x^s$ and $h^\infty(\xi,\dpr{\id_{\partial \Bbb_E},\nu^\infty_{x,\xi}}) = h^\infty(\xi,0) = 0$ for $\rho_x^*$, where $\rho_x^*$ is the singular part of $\rho_x^s$ with respect to $|\bbpr{\bm{\nu}}_x^s|$, we get (using the 1-homogeneity of $h^\infty$) the refined estimate
		\begin{equation}\label{eq:join3}
		\int_Z h\bigg(\xi,\frac{\dd \bbpr{\bm{\nu}}_x}{\dd \Leb^d}(\xi)\bigg) 
		+ \int_Z h^\infty\bigg(\xi,\frac{\dd \bbpr{\bm{\nu}}^s_x}{\dd \rho_\nu^s}(\xi)\bigg) \dd \rho_x^s(\xi) \ge h_{*\A}(\bbpr{\bm{\nu}}_x(Z))\,.
		\end{equation}
		In particular, if $x \in \reg_\nu(\cl\Omega)$, we apply the inequality~\eqref{eq:join3} to deduce
		\begin{align*}
		\int_Z h\bigg(\xi,\frac{\dd \bbpr{\bm{\nu}}_x}{\dd \Leb^d}(\xi)\bigg) 
		+ \int_Z h^\infty\bigg(\xi,\frac{\dd \bbpr{\bm{\nu}}^s_x}{\dd \rho_x^s}(\xi)\bigg) \dd \rho_x^s(\xi) & \stackrel{\eqref{eq:density2}}{\ge} h_{*\A}\bigg(\frac{\dd \bpr{\bm{\nu}}}{\dd \Leb^d}(x)\bigg).
		\end{align*}
		This proves~\eqref{eq:J1r}. If on the other hand $x \in \sing_\nu(\cl \Omega)$, we use~\eqref{eq:join3} (with $h^\infty$ in place of $h$) to get 
		\begin{align*}
		\int_Z h^\infty\bigg(\xi,&\frac{\dd \rho_x}{\dd \Leb^d}(\xi)\dpr{\id_{E},\nu_{x,\xi}^\infty} \bigg) \dd \xi + 
		\int_Z h^\infty\bigg(\xi,\dpr{\id_{\partial \Bbb_E},\nu_{x,\xi}^\infty} \bigg)\dd \rho_x^s (\xi) \\[5pt]
		& \qquad = \int_Z h^\infty\bigg(\xi,\frac{\dd \bbpr{\bm{\nu}}_x}{\dd \rho_x}(\xi)\bigg) \dd \rho_x (\xi) \stackrel{\eqref{eq:density2}}{\ge} (h^\infty)_{*\A}\bigg(x,\frac{\dd \bpr{\bm{\nu}}}{\dd \lambda^s}(x)\bigg).
		\end{align*}
		This proves~\eqref{eq:J1s} and the proof is completed.
	\end{proof}
	
	\subsubsection{Second-scale Jensen inequalities}
	
	\begin{proposition}[at regular points]\label{lem:J2r} Let $\bm{\nu}= (\nu,\lambda,\rho,{\nu}^\infty) \in \Y^2(\Omega;E)$ be an $\A$-free two-scale* Young measure and let $h \in \Crm(Z \times E) \cap \Rbf^2(\Omega;\E)$ be an integrand that is convex in its second argument, that is, $h(\xi,\frarg)$ is convex for all $\xi \in Z$. Then, for all regular points $x \in \reg_{\bm{\nu}}(\cl\Omega)$ the following inequalities hold:
		
		\begin{enumerate}[(i)]
			\item at $\Leb^d_Z$-almost every $\xi$ in $Z$,
			\begin{equation}\label{eq:J2r}
			h\bigg(\xi,\frac{\dd \bbpr{\bm{\nu}}_x}{\dd \Leb^d}(\xi)\bigg) \le \frac{\dd \bbpr{h,\bm{\nu}}_x}{\dd \Leb^d}(\xi),
			\end{equation} 
			\item and, at $\rho_x^s$-almost every $\xi$ in $Z$,
			\begin{equation}\label{eq:J2r'}
			h^\infty\bigg(\xi,\frac{\dd \bbpr{\bm{\nu}}_x}{\dd \rho_x^s}(\xi)\bigg) \le \frac{\dd \bbpr{h^\infty,\bm{\nu}}_x}{\dd \rho_x^s}(\xi).
			\end{equation} 
		\end{enumerate}
	\end{proposition}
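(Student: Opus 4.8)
The plan is to make the Radon--Nikodym densities in~\eqref{eq:J2r}--\eqref{eq:J2r'} explicit and then reduce both inequalities, pointwise in $\xi$, to the elementary Jensen inequality for generalized Young measures; the $\A$-free hypothesis will in fact play no role here (it is needed only in Proposition~\ref{lem:J1}, through the envelope $h_{*\A}$). Fix a regular point $x\in\reg_{\bm{\nu}}(\cl\Omega)$ at which $\bbpr{\bm{\nu}}_x$ is defined, and write $\rho_x=\rho_x^{ac}\Leb^d_Z+\rho_x^s$ for the Lebesgue decomposition of $\rho_x$ against $\Leb^d_Z$.

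\emph{Step 1 (densities).} Inserting this decomposition into~\eqref{def:weighted} (whose concentration integral, for $x$ regular, is to be read over $V$, as for $\bbpr{\bm{\nu}}_x$ in Definition~\ref{def:bary2}), one gets for any continuous $g$ of linear growth
\[
\bbpr{g,\bm{\nu}}_x=\bigl(\dprb{g(\xi,\frarg),\nu_{x,\xi}}+m(\xi)\,\dprb{g^\infty(\xi,\frarg),\nu^\infty_{x,\xi}}\bigr)\,\Leb^d_Z+\lambda^{ac}(x)\,\dprb{g^\infty(\xi,\frarg),\nu^\infty_{x,\xi}}\,\rho_x^s,
\]
where $m(\xi)\coloneqq\lambda^{ac}(x)\,\rho_x^{ac}(\xi)\ge0$; the case $g=\id_E$, $g^\infty=\id_{\partial\Bbb_E}$ describes $\bbpr{\bm{\nu}}_x$. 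Since $\rho_x^s\perp\Leb^d_Z$, reading off the $\Leb^d_Z$- and $\rho_x^s$-densities gives, at $\Leb^d_Z$-a.e.\ resp.\ $\rho_x^s$-a.e.\ $\xi$,
\[
\frac{\dd\bbpr{\bm{\nu}}_x}{\dd\Leb^d}(\xi)=\dprb{\id_E,\nu_{x,\xi}}+m(\xi)\,\dprb{\id_{\partial\Bbb_E},\nu^\infty_{x,\xi}},\qquad \frac{\dd\bbpr{h,\bm{\nu}}_x}{\dd\Leb^d}(\xi)=\dprb{h(\xi,\frarg),\nu_{x,\xi}}+m(\xi)\,\dprb{h^\infty(\xi,\frarg),\nu^\infty_{x,\xi}},
\]
and, using $(h^\infty)^\infty=h^\infty$, $\frac{\dd\bbpr{\bm{\nu}}_x}{\dd\rho_x^s}(\xi)=\lambda^{ac}(x)\,\dprb{\id_{\partial\Bbb_E},\nu^\infty_{x,\xi}}$ and $\frac{\dd\bbpr{h^\infty,\bm{\nu}}_x}{\dd\rho_x^s}(\xi)=\lambda^{ac}(x)\,\dprb{h^\infty(\xi,\frarg),\nu^\infty_{x,\xi}}$.

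\emph{Step 2 (scalar Jensen lemma).} I would then prove the elementary fact: if $g\colon E\to\R$ is convex with linear growth, $\mu\in\mathrm{Prob}(E)$ with $\dprb{|\frarg|,\mu}<\infty$, $m\ge0$ and $\sigma\in\mathrm{Prob}(\partial\Bbb_E)$, then $g(z_0)\le\dprb{g,\mu}+m\,\dprb{g^\infty,\sigma}$ with $z_0\coloneqq\dprb{\id_E,\mu}+m\,\dprb{\id_{\partial\Bbb_E},\sigma}$. Indeed, take any subgradient $\ell\in\partial g(z_0)$ (finite, since $g$ is finite convex on $E$): integrating the supporting inequality $g(z)\ge g(z_0)+\ell\cdot(z-z_0)$, $z\in E$, against $\dd\mu(z)$ gives $\dprb{g,\mu}\ge g(z_0)+\ell\cdot\dprb{\id_E,\mu}-\ell\cdot z_0$; passing to the limit in $g(tz)/t\ge g(z_0)/t+\ell\cdot z-\ell\cdot z_0/t$ gives $g^\infty(z)\ge\ell\cdot z$, hence $m\,\dprb{g^\infty,\sigma}\ge m\,\ell\cdot\dprb{\id_{\partial\Bbb_E},\sigma}$; adding the two estimates and using the definition of $z_0$ yields the claim. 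Here I use that $h(\xi,\frarg)$ convex with linear growth makes $h^\infty(\xi,\frarg)$ convex and positively $1$-homogeneous (so $h^\infty(\xi,0)=0$), and that $\dprb{|\frarg|,\nu_{x,\xi}}<\infty$ for $\Leb^d_Z$-a.e.\ $\xi$ (part of $\bbpr{\bm{\nu}}_x$ being a finite measure).

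\emph{Step 3 (conclusion).} For~\eqref{eq:J2r} I apply Step 2 at $\Leb^d_Z$-a.e.\ $\xi$ with $g=h(\xi,\frarg)$, $\mu=\nu_{x,\xi}$, $\sigma=\nu^\infty_{x,\xi}$, $m=m(\xi)$: by Step 1 the left-hand side is $h\bigl(\xi,\tfrac{\dd\bbpr{\bm{\nu}}_x}{\dd\Leb^d}(\xi)\bigr)$ and the right-hand side is $\tfrac{\dd\bbpr{h,\bm{\nu}}_x}{\dd\Leb^d}(\xi)$. For~\eqref{eq:J2r'} I apply Step 2 at $\rho_x^s$-a.e.\ $\xi$ with $g=h^\infty(\xi,\frarg)$ (so $g^\infty=g$), $\mu=\nu^\infty_{x,\xi}$, $m=0$, obtaining $h^\infty(\xi,\dprb{\id_{\partial\Bbb_E},\nu^\infty_{x,\xi}})\le\dprb{h^\infty(\xi,\frarg),\nu^\infty_{x,\xi}}$; multiplying by $\lambda^{ac}(x)\ge0$ and using positive $1$-homogeneity together with the $\rho_x^s$-density identities of Step 1 gives the inequality, the case $\lambda^{ac}(x)=0$ being the trivial $h^\infty(\xi,0)=0\le0$. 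There is no real obstacle here: the only delicate point is keeping the Lebesgue decompositions straight (of $\rho_x$ against $\Leb^d_Z$, and of each weighted barycenter against $\Leb^d_Z$ and against $\rho_x^s$), and the step that superficially looks like it needs a measurable selection of subgradients does not, since~\eqref{eq:J2r}--\eqref{eq:J2r'} are pointwise a.e.\ statements proved separately at each $\xi$.
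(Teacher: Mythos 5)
Your proposal is correct and follows essentially the same route as the paper: make the $\Leb^d_Z$- and $\rho_x^s$-densities of $\bbpr{\bm{\nu}}_x$ and of the weighted barycenters explicit via the decomposition $\rho_x=\rho_x^{ac}\Leb^d_Z+\rho_x^s$, then conclude pointwise in $\xi$ by a Jensen-type inequality, with the $\A$-free hypothesis playing no role. The only (harmless) difference is that your single subgradient argument proves in one stroke what the paper obtains from classical Jensen applied to $\nu_{x,\xi}$ and $\nu^\infty_{x,\xi}$ together with the subadditivity $g(z_1+z_2)\le g(z_1)+g^\infty(z_2)$, and your bookkeeping of the factor $\lambda^{ac}(x)$ in the $\rho_x^s$-densities (including the degenerate case $\lambda^{ac}(x)=0$) is, if anything, more careful than the paper's.
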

	\begin{proof}Let $x \in \reg_\nu(\Omega)$ and $\xi \in Z$ so that ${\dd \bbpr{\bm{\nu}}_{x}}/{\dd \Leb^d}(\xi)$ exists (note that this property is satisfied at $\Leb^d_Z$-almost every $\xi$ in $Z$). Observe  that since $h$ is $\xi$-uniformly Lipschitz in its second argument, ${\dd \bbpr{h,\bm{\nu}}_{x}}/{\dd \Leb^d}(\xi)$ also exists at such $\xi$'s. We have
		\[
		\frac{\dd \bbpr{\bm{\nu}}_{x}}{\dd \Leb^d}(\xi) =\dpr{\id_{E},\nu_{x,\xi}} + \frac{\dd \lambda}{\dd \Leb^d}(x)\cdot \frac{\dd\rho_x}{\Leb^d}(\xi)\cdot \dpr{\id_{\partial \Bbb_E},\nu_{x,\xi}^\infty}.
		\]
		By the classical Jensen's inequality and the positive $1$-homogeneous character of $h^\infty$ we further obtain
		\begin{align*}
		\frac{\dd \bbpr{h,\bm{\nu}}_x}{\dd \Leb^d}(\xi) & = \dpr{h(\xi,\frarg),\nu_{x,\xi}}  + \frac{\dd \lambda}{\dd \Leb^d}(x)\cdot \frac{\dd\rho_x}{\Leb^d}(\xi)\cdot \dpr{h^\infty(\xi,\frarg),\nu_{x,\xi}^\infty}  \nonumber \\
		& \ge h(\xi,\dpr{\id_{E},\nu_{x,\xi}}) + h^\infty\bigg(\xi,\frac{\dd \lambda}{\dd \Leb^d}(x)\cdot \frac{\dd\rho_x}{\Leb^d}(\xi)\cdot \dpr{\id_{\partial\Bbb_E},\nu_{x,\xi}^\infty}\bigg). 
		\end{align*}
		Now, let us recall the following sub-additive property satisfied by convex functions and their recession functions: every convex function $g : E \to \R$ 
		$g(z_1 + z_2) \le g(z_1) + g^\infty(z_2)$ for all $z_1,z_2 \in E$. It follows directly from this observation and the inequality above that 
		\[
		\frac{\dd \bbpr{h,\bm{\nu}}_x}{\dd \Leb^d}(\xi) \ge  h\bigg(x,\xi,\frac{\dd \bbpr{\bm{\nu}}_x}{\dd \Leb^d}(\xi)\bigg)\,. 
		\]
		This proves the first inequality in~\eqref{eq:J2r}.
		
		For the proof of the second inequality we let $\xi \in Z$ be such that ${\dd \bbpr{\bm{\nu}}_x}/{\dd \rho_x^s}(\xi)$ exists, hence also ${\dd \bbpr{h,\bm{\nu}}_x}/{\dd \rho_x^s}(\xi)$ exists ($h^\infty$ is $\xi$-uniformly Lipschitz continuous in its second argument). In fact, these two measure-theoretic derivatives are given by the parings $\dpr{\id_{\partial\Bbb_E}, \nu^\infty_{x,\xi}}$ and $\dpr{h(\xi,\frarg), \nu^\infty_{x,\xi}}$ respectively. Once more, a simple application of Jensen's classical inequality gives
		\begin{align*}
		\frac{\dd \bbpr{h,\bm{\nu}}_x}{\dd \rho_x^s}(\xi) & = \dpr{h^\infty(\xi,\frarg),\nu^\infty_{x,\xi}}  \\
		& \geq h^\infty(\xi,\dpr{\id_{\partial\Bbb_E},\nu_{x,\xi}^\infty})
		\end{align*}
		Since ${\dd \bbpr{\bm{\nu}}_x}/{\dd \rho_x^s}(\xi)$ and $\dpr{h(\xi,\frarg), \nu^\infty_{x,\xi}}$ exist $\rho_x^s$-almost everywhere in $Z$, this proves~\eqref{eq:J2r'}.\end{proof}
	%
	%

	\begin{proposition}[at singular points] Let $\bm{\nu}= (\nu,\lambda,\rho,{\nu}^\infty) \in \Y^2(\Omega;E)$ be an $\A$-free two-scale* Young measure and let $g \in \Crm(Z \times \R^N)$ be continuous integrand. Further, assume that $g(\xi,\frarg)$ is convex and positively 1-homogeneous for all $\xi \in Z$. Then, at every $x \in \sing_{\bm\nu}(\Omega)$ it holds that
		\begin{equation}\label{eq:J2s}
		g\bigg(\xi,\frac{\dd \bbpr{{\bm\nu}}_x}{\dd \rho_x}(\xi)\bigg) \le \frac{\dd \bbpr{g,{\bm\nu}}_x}{\dd \rho_x}(\xi) \quad \text{for $\rho_x$-almost every $\xi \in Z$}\,.
		\end{equation}
	\end{proposition}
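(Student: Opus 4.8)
The plan is to reduce the asserted inequality to the classical (finite-dimensional) Jensen inequality, applied fibrewise in the microscopic variable $\xi$, after unwinding the singular branches of the second-scale barycenter and of the weighted second-scale barycenter.

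First I would observe that, because $g(\xi,\frarg)$ is continuous and positively $1$-homogeneous for every $\xi\in Z$, its strong recession function coincides with $g$: for fixed $\xi'$ and $z'$ one has $g(\xi',tz')/t=g(\xi',z')$ for all $t>0$, so passing to the limit $(\xi',z',t)\to(\xi,z,\infty)$ and using continuity gives $g^\infty=g$, and the same computation gives $g^\#\equiv g_\#\equiv g$; in particular $g$ has linear growth at infinity and $g\in\Rbf^2(\Omega;E)$, so $\bbpr{g,\bm\nu}$ is well defined. Next, fixing $x\in\sing_{\bm\nu}(\Omega)$, I would read off from the singular branch of Definition~\ref{def:bary2} that $\bbpr{\bm\nu}_x(V)=\int_V\dprb{\id_{\partial\Bbb_E},\nu^\infty_{x,\xi}}\dd\rho_x(\xi)$, which is absolutely continuous with respect to $\rho_x$ with bounded density, whence $\dd\bbpr{\bm\nu}_x/\dd\rho_x(\xi)=\dprb{\id_{\partial\Bbb_E},\nu^\infty_{x,\xi}}$ for $\rho_x$-a.e.\ $\xi$; similarly, from the singular branch of~\eqref{def:weighted} together with $g^\infty=g$, one gets $\dd\bbpr{g,\bm\nu}_x/\dd\rho_x(\xi)=\dprb{g(\xi,\frarg),\nu^\infty_{x,\xi}}$ for $\rho_x$-a.e.\ $\xi$.

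With these identifications, \eqref{eq:J2s} becomes $g\big(\xi,\dprb{\id_{\partial\Bbb_E},\nu^\infty_{x,\xi}}\big)\le\dprb{g(\xi,\frarg),\nu^\infty_{x,\xi}}$ for $\rho_x$-a.e.\ $\xi$, and for each such $\xi$ this is exactly Jensen's inequality for the convex finite function $g(\xi,\frarg)\colon E\to\R$ and the probability measure $\nu^\infty_{x,\xi}$, whose barycenter lies in $\cl{\Bbb_E}$ since $\nu^\infty_{x,\xi}$ is supported on $\partial\Bbb_E$. I do not expect a genuine obstacle here: the only slightly delicate points are the identity $g^\infty=g$, which uses nothing more than the $1$-homogeneity of $g$, and the verification that at a singular point both $\bbpr{\bm\nu}_x$ and $\bbpr{g,\bm\nu}_x$ are genuinely $\rho_x$-absolutely continuous, so that their Radon--Nikod\'ym densities are indeed the claimed pointwise pairings against $\nu^\infty_{x,\xi}$.
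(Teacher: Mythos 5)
Your proof is correct and follows essentially the same route as the paper: the paper's proof simply refers to the second part of the argument for Proposition~\ref{lem:J2r}, which is exactly your identification of the Radon--Nikod\'ym derivatives $\dd\bbpr{\bm\nu}_x/\dd\rho_x$ and $\dd\bbpr{g,\bm\nu}_x/\dd\rho_x$ with the pairings $\dprb{\id_{\partial\Bbb_E},\nu^\infty_{x,\xi}}$ and $\dprb{g(\xi,\frarg),\nu^\infty_{x,\xi}}$, followed by the classical Jensen inequality for the convex function $g(\xi,\frarg)$ and the probability measure $\nu^\infty_{x,\xi}$. Your preliminary observation that $1$-homogeneity and continuity give $g^\infty=g$ (so the singular branch of the weighted barycenter is the pairing with $g$ itself) is a correct and useful spelling-out of what the paper leaves implicit.
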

	\begin{proof}
		The proof can be reproduced by following the exact same ideas in the proof of the second part of the proof of Proposition~\ref{lem:J2r}.
	\end{proof}
	
	We are now in place to prove the \emph{homogeneous} version of Jensen's inequality that involves the $\A$-$\hom$ envelope (of convex integrands). 
	
	\begin{theorem}[homogenized Jensen's inequality]\label{thm:J}
		Let $\bm{\nu}= (\nu,\lambda,\rho,{\nu}^\infty) \in \Y^2(\Omega;E)$ be an $\A$-free two-scale* Young measure and let $h \in \Crm(Z \times E) \cap \Rbf^2(E)$ be an integrand that is convex in its second argument, that is, $h(\xi,\frarg)$ is convex for all $\xi \in Z$. Then,
		\begin{enumerate}
			\item at every regular point $x \in \reg_{\bm\nu}(\Omega)$ it holds that
			\begin{equation}\label{eq:J1rr}
			\begin{split}
			h_{*\Acal}\bigg(\frac{\dd \bpr{{\bm\nu}}}{\dd \Leb^d}(x)\bigg) \leq \frac{\dd \bbpr{h,{\bm\nu}}_x(Z)}{\dd \Leb^d}(x)\,,
			\end{split}
			\end{equation}
			\item and, at every singular point  $x \in \sing_{\bm\nu}(\Omega)$,
			\begin{equation}\label{eq:J1ss}
			(h^\infty)_{*\Acal}\bigg(\frac{\dd \bpr{{\bm\nu}}}{\dd \lambda^s}(x)\bigg) \le  \frac{\dd \bbpr{h,{\bm\nu}}_x(Z)}{\dd \lambda^s}(x)\,.
			\end{equation}
		\end{enumerate}
	\end{theorem}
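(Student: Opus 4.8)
The plan is to obtain \eqref{eq:J1rr} and \eqref{eq:J1ss} by composing the two single-scale Jensen inequalities already proved; the only genuine work is to reconcile the weighted second-scale barycenters of $h$ and of $h^\infty$.

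The first step is a \emph{bookkeeping identity}. Since $(h^\infty)^\infty\equiv h^\infty$, comparing the defining formula~\eqref{def:weighted} for $\bbpr{h,\bm{\nu}}_x$ with that for $\bbpr{h^\infty,\bm{\nu}}_x$ shows that at every regular point $x$ these two measures have the \emph{same} singular part with respect to $\rho_x^s$, equal to $\lambda^{ac}(x)\dpr{h^\infty(\xi,\frarg),\nu^\infty_{x,\xi}}\,\rho_x^s$; hence, splitting $\bbpr{h,\bm{\nu}}_x$ into its $\Leb^d_Z$-absolutely continuous and $\Leb^d_Z$-singular parts,
\[
\int_Z \frac{\dd\bbpr{h,\bm{\nu}}_x}{\dd\Leb^d}(\xi)\dd\xi \;+\; \int_Z \frac{\dd\bbpr{h^\infty,\bm{\nu}}_x}{\dd\rho_x^s}(\xi)\dd\rho_x^s(\xi) \;=\; \bbpr{h,\bm{\nu}}_x(Z),
\]
whereas at every singular point $x$ one simply has $\bbpr{h^\infty,\bm{\nu}}_x\equiv\bbpr{h,\bm{\nu}}_x$. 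In either case $\bbpr{h,\bm{\nu}}_x(Z)$ is, by~\eqref{eq:density3}, exactly the Radon--Nikodym density appearing on the right-hand side of the theorem.

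For the regular case I would fix $x\in\reg_{\bm{\nu}}(\Omega)$, start from the first-scale inequality~\eqref{eq:J1r} of Proposition~\ref{lem:J1}, and estimate each of its two integrands pointwise in $\xi$ using the second-scale inequalities~\eqref{eq:J2r} and~\eqref{eq:J2r'} of Proposition~\ref{lem:J2r}, valid for $\Leb^d_Z$-a.e.\ $\xi$ and $\rho_x^s$-a.e.\ $\xi$ respectively; integrating these pointwise bounds over $Z$ against $\Leb^d_Z$ and $\rho_x^s$ and invoking the identity above yields $h_{*\A}\big(\frac{\dd\bpr{\bm{\nu}}}{\dd\Leb^d}(x)\big)\le\bbpr{h,\bm{\nu}}_x(Z)$, which is~\eqref{eq:J1rr}. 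For the singular case I would fix $x\in\sing_{\bm{\nu}}(\Omega)$ and note that, $h$ being convex in its last argument and lying in $\Rbf^2$, its recession $h^\infty$ is a continuous integrand that is convex and positively $1$-homogeneous in that variable; the singular second-scale Jensen inequality~\eqref{eq:J2s}, applied with $g=h^\infty$, therefore bounds $h^\infty\big(\xi,\frac{\dd\bbpr{\bm{\nu}}_x}{\dd\rho_x}(\xi)\big)$ by $\frac{\dd\bbpr{h^\infty,\bm{\nu}}_x}{\dd\rho_x}(\xi)$ for $\rho_x$-a.e.\ $\xi$. Integrating against $\rho_x$ and combining with~\eqref{eq:J1s} and the identity $\bbpr{h^\infty,\bm{\nu}}_x\equiv\bbpr{h,\bm{\nu}}_x$ gives $(h^\infty)_{*\A}\big(\frac{\dd\bpr{\bm{\nu}}}{\dd\lambda^s}(x)\big)\le\bbpr{h,\bm{\nu}}_x(Z)$, which is~\eqref{eq:J1ss}.

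The main obstacle is precisely the bookkeeping of the first step: one must verify that the absolutely continuous and the $\Leb^d_Z$-singular pieces of $\bbpr{h,\bm{\nu}}_x$ --- assembled from $h$ and from $h^\infty$ separately in~\eqref{def:weighted} --- are matched with the correct halves of Proposition~\ref{lem:J2r}, and use the $1$-homogeneity of $h^\infty$ (which kills $h^\infty(\xi,0)$) to pass freely between integration against $\rho_x$, against $\rho_x^s$, and against $|\bbpr{\bm{\nu}}_x^s|$; in particular one discards the part $\rho_x^*$ of $\rho_x^s$ on which $\dpr{\id_{\partial\Bbb_E},\nu^\infty_{x,\xi}}$ vanishes, exactly as in the proof of Proposition~\ref{lem:J1}. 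Once these identifications are in place, the theorem is nothing more than the composition of the first- and second-scale Jensen inequalities.
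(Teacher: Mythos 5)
Your proposal is correct and follows exactly the route the paper intends: its proof of Theorem~\ref{thm:J} is literally the one-line assertion that the result is a direct consequence of Propositions~\ref{lem:J1},~\ref{lem:J2r} and the singular-point Jensen inequality~\eqref{eq:J2s}, i.e.\ the composition of the first- and second-scale inequalities you carry out. Your bookkeeping --- matching the $\Leb^d_Z$-absolutely continuous and $\rho_x^s$-carried pieces of $\bbpr{h,\bm{\nu}}_x$ with~\eqref{eq:J2r} and~\eqref{eq:J2r'}, using $(h^\infty)^\infty\equiv h^\infty$ so that $\bbpr{h^\infty,\bm{\nu}}_x$ and $\bbpr{h,\bm{\nu}}_x$ share their $\rho_x^s$-part (and coincide at singular points), and invoking $1$-homogeneity together with~\eqref{eq:density3} --- correctly supplies the details the paper leaves implicit.
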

	\begin{proof}
		The proof is a direct consequence of Propositions~\ref{lem:J2r},~\ref{lem:J2r}, and~\ref{lem:J1}.
	\end{proof}
	
	\subsubsection{Comments on the non-convex case} 
	
	In general, even at singular points $x \in\sing_{\bm{\nu}}\cap \cl \Omega$, one \emph{cannot} expect the second-scale differential inclusion 
	\begin{equation}\label{eq:fail}
	\frac{\dd \bbpr{\bm{\nu}}_{x_0}}{\dd \Leb^d_Z}(\xi) \in \Lambda_{\A} \quad \text{for $\Leb^d_Z$-almost every $\xi \in Z$}
	\end{equation}
	to hold. 
	Instead, we believe the following weaker statement holds under mild non-degeneracy assumptions on $\A$ (for instance if $\A$ satisfies Murat's constant rank condition).
	\begin{conjecture}\label{con:sera} Let $\bm{\nu} \in \Y^2(\Omega;\R^N)$ be an $\A$-free two-scale* Young measure and let $h \in \Crm(Z \times \R^N)$ be a positively 1-homogeneous for all $\xi \in Z$.
		Then, for all $x_0 \in \sing_{\bm{\nu}}$ it holds that
		\begin{equation}\label{eq:conjecture}
		\bbpr{h,\bm\nu}_{x_0}(Z) 
		\ge h_{\Ahom}(\bbpr{\bm\nu}_{x_0}(Z))
		\end{equation}
	\end{conjecture}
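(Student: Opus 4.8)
The natural strategy is to localise at the singular point, realise the second-scale barycenter by an asymptotically $\A$-free sequence on the unit cell, and then recognise that sequence inside the cell formula~\eqref{eq:hom} defining $h_{\Ahom}$.

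\emph{Step 1: reduction by localisation.} Applying the singular localisation principle in its $\A$-free form — Proposition~\ref{prop:loc_singular} together with the $\A$-preserving blow-up argument used in the proof of Lemma~\ref{lem:support} — at $\lambda^s$-almost every $x_0 \in \sing_{\bm\nu}(\Omega)$ there are a vector $\xi_0 = \xi(x_0) \in Z$, a probability tangent measure $D\lambda \in \Tan_1(\lambda^s,x_0)$, and an asymptotically $\A$-free sequence $(u_\eps)_\eps \subset \M(Q;\R^N)$ generating a tangent two-scale* Young measure $\bm{D\nu(x_0)} \in \Y^2(Q;\R^N)$ whose components satisfy $D\nu_{y,\xi} = \delta_0$, $D\rho_y = \Gamma^{\xi_0}_\#\rho_{x_0}$ and $D\nu^\infty_{y,\xi} = \nu^\infty_{x_0,\xi+\xi_0}$. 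Fix the $x$-independent, continuous, linearly-growing and (in $z$) positively $1$-homogeneous integrand $h_{\xi_0}(\xi,z) := h(\xi+\xi_0,z)$; then $h_{\xi_0} \in \Rbf^2(Q;\R^N)$ with $h_{\xi_0}^\infty \equiv h_{\xi_0}$, and $(h_{\xi_0})_{\Ahom} = h_{\Ahom}$ because the cell problem~\eqref{eq:hom} is invariant under shifts of the periodicity lattice. Testing the generating property with $h_{\xi_0}$, the representation Theorem~\ref{thm:representation} together with the translation identity~\eqref{eq:translationinvariance} give
\[
I^\eps_{h_{\xi_0}}(u_\eps) \;\to\; D\lambda(\cl Q)\,\bbpr{h,\bm\nu}_{x_0}(Z) \;=\; \bbpr{h,\bm\nu}_{x_0}(Z),
\]
the biting term dropping out since $h(\xi,0)=0$ and $D\lambda(\cl Q)=1$; likewise $u_\eps \toweakstar [\bm{D\nu(x_0)}] = z_0\,D\lambda$ with $z_0 := \bbpr{\bm\nu}_{x_0}(Z)$. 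Hence~\eqref{eq:conjecture} is equivalent to the cell-scale lower bound
\[
\liminf_{\eps \to 0}\; I^\eps_{h_{\xi_0}}(u_\eps) \;\ge\; h_{\Ahom}(z_0)
\]
for every asymptotically $\A$-free $(u_\eps)_\eps$ on $Q$ with purely concentrating mass ($D\nu \equiv \delta_0$) and $u_\eps \toweakstar z_0\,D\lambda$, $D\lambda \in \mathrm{Prob}(\cl Q)$.

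\emph{Step 2: the cell estimate.} Here one must exploit a non-degeneracy hypothesis on $\A$, say Murat's constant-rank condition. I would: (i) iterate the tangent construction once more, replacing $D\lambda$ by one of its own tangent measures — as in the structure theory of \cite{de-philippis2016on-the-structur} and its refinements — to reduce to a model concentration profile adapted to a subspace of $\R^d$, while keeping the fast oscillation at a scale well separated from that of the second blow-up; (ii) use the $0$-homogeneous Fourier projector onto $\ker\A$ furnished by the constant-rank condition to replace $u_\eps$ by genuinely $\A$-free, zero-mean competitors that are periodic in the fast variable, the error in $I^\eps_{h_{\xi_0}}$ being controlled by the linear growth of $h$ and absorbed via the free rescaling of amplitudes permitted by the $1$-homogeneity of $h$; and (iii) synchronise the fast scale $x/\eps$ with a cube $Q_R$ by a Riemann-sum argument, so that the modified competitor becomes admissible in~\eqref{eq:hom}, whence the desired bound by the definition of $h_{\Ahom}$.

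\emph{Main obstacle.} The crux is steps (ii)--(iii). Without convexity one cannot invoke the measure-theoretic Jensen inequality of Proposition~\ref{lem:J1} to pass from the sphere-valued measures $\nu^\infty_{x_0,\xi}$ to their barycenter; instead one must construct recovery-type competitors that \emph{simultaneously} carry the concentration encoded by $D\lambda$, oscillate at the fast scale $x/\eps$ so as to feel the $\xi$-dependence of $h$, and satisfy $\A u=0$ exactly. Whether the two-scale* Young measure at a singular point is rich enough to be realised by such competitors — and whether $h_{\Ahom}$, rather than some $\A$-quasiconvexification of it, is even the correct lower bound — is exactly what remains unclear; note that the pointwise inclusion~\eqref{eq:fail} genuinely fails, so the argument cannot be routed through a differential inclusion holding for $\Leb^d_Z$-almost every $\xi$ and must stay genuinely averaged over the torus. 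This is why the statement is left as a conjecture.
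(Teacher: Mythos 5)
This statement is a \emph{conjecture} in the paper: the authors give no proof, only the remarks that the case of first-order operators follows from \cite{matias2015homogenization-} and that the $\BD$-case is settled in the forthcoming work \cite{arroyo-rabasa2018rigidity-of-tan} via rigidity of tangent $\A$-free measures. So there is no paper proof to compare against, and your submission — by your own admission in the final paragraph — is not a proof either. Your Step~1 is essentially sound and consistent with the paper's toolkit: combining Proposition~\ref{prop:loc_singular} with the $\A$-preserving blow-up argument from the proof of Lemma~\ref{lem:support}, testing the tangent Young measure $(\delta_0,D\lambda,\Gamma^{\xi_0}_\#\rho_{x_0},\nu^\infty_{x_0,\xi+\xi_0})$ with $h_{\xi_0}$ and using~\eqref{eq:translationinvariance} does reduce~\eqref{eq:conjecture} to a cell-scale lower bound for asymptotically $\A$-free sequences on $Q$ that concentrate onto $z_0\,D\lambda$ with $z_0=\bbpr{\bm\nu}_{x_0}(Z)$ (one should say ``implied by'' rather than ``equivalent to'', and the conclusion can only hold $\lambda^s$-a.e., not for all singular points, but these are cosmetic).

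The genuine gap is your Step~2, and it is exactly the open content of the conjecture. Concretely, item (ii) does not go through as described: the constant-rank projection onto $\ker\A$ (the Fonseca--M\"uller machinery) yields controllable errors only for $p$-equi-integrable sequences with $p>1$, whereas at a singular point the blow-up sequence converges weakly-$*$ to the singular measure $z_0\,D\lambda$ and carries no equi-integrability whatsoever; the error produced by projecting is then of the order of the total mass, and the positive $1$-homogeneity of $h$ only makes the energy scale linearly in that error — it does not make it small. Truncation or amplitude rescaling destroys either the constraint $\A u=0$ or the concentration profile $D\lambda$, and, as you note, the failure of the pointwise inclusion~\eqref{eq:fail} blocks any argument routed through a $\Leb^d_Z$-a.e.\ differential inclusion. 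Moreover, whether the correct lower bound is $h_{\Ahom}$ at all, rather than some $\A$-quasiconvex envelope of it, is precisely what is unresolved for operators of order $k\ge 2$; this is why the authors tie the question to the structure of tangent $\A$-free measures rather than to a projection argument. In short: Step~1 is a correct reduction matching the paper's framework, Step~2 is a heuristic whose key estimate is missing, and the statement remains, as in the paper, unproved.
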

	This is a \emph{powerful} inclusion, and, in fact, it is the key inequality towards the characterization of the homogenization of \emph{non-convex}  integrals. The proof of this conjecture is relatively simple provided that $\A$ is a first-order operator (see~\cite{matias2015homogenization-}). 
	For operators of general order, the veracity of this conjecture seems to be linked to the \emph{structural}  properties of tangent $\A$-free measures.
	
	\subsubsection{The conjecture in $\BD$} 
	A natural candidate to study the elastic perfectly plastic behavior of materials is the space of
	functions whose linearized strains are measures. Formally, these deformations belong to the space of \emph{functions of  bounded deformation} which is defined as
	\[
	\BD(\Omega) \coloneqq \setb{u \in \Lrm^1(\Omega;\R^d)}{Eu \coloneqq Du + Du^T \in \M(\Omega;\Mbf_{d \times d}^\sym)}.
	\] 
	In the $\A$-free context, a measure $\mu \in \M(\Omega;\Mbf_{d \times d}^\sym)$ is locally a symmetric gradient if and only if it satisfies (see~\cite[Example~3.10(e)]{fonseca1999mathcal-a-quasi}) the second order PDE-constraint 
	\[
	\curl \curl \mu \coloneqq \bigg(\sum_{i=1}^d \partial_{ik} \mu_{i}^j+\partial_{ij} \mu_{i}^k-\partial_{jk} \mu_{i}^i-\partial_{ii} \mu_{j}^k\bigg)_{jk} \qquad j,k=1,\ldots,d\,.
	\] 
	In a forthcoming~\cite{arroyo-rabasa2018rigidity-of-tan} paper we shall give a positive answer to Conjecture~\ref{con:sera} in the case $\bm{\nu}$ is $\BDY^2(\Omega)$ two-scale* Young measure, that is, when
	\[
	Eu_j \toYY \bm{\nu} \qquad (u_j)_{j \in \Nbb} \subset \BD(\Omega).
	\]


	\subsection{Proof of the Theorem~\ref{thm:hom}} By the definition of $\Gamma$-limit it is enough to verify the conclusion of the theorem for infinitesimal sequences. Let $\eps_j\searrow 0$ and $\mu_n \toweakstar \mu$ in $\M(\cl \Omega;E) \cap \ker \A$. 
	Observe that up to extracting a subsequence we may assume without loss of generality that
	\[
	\liminf_{j \to \infty} I^{\eps_j}      (\mu_j) = \lim_{j \to \infty} I^{\eps_j}(\mu_j) 
	\]
	and
	\[
	\text{and} \quad \mu_n \toYY \bm{\nu} \quad \text{for some $\bm{\nu}= (\nu,\lambda,\rho,\nu^\infty)\in \Y^2(\Omega;E)$}\,.
	\]
	
	The Young-measure representation and the Radon--Nykodym differentiation give
	\begin{align*}
	\liminf_{j \to \infty} I^{\eps_j}(\mu_j) & = \ddprb{f,\bm{\nu}} \\
	&  = \int_{\cl \Omega} \bbpr{f,\bm{\nu}}_x(Z) \dd(\Leb^d + \lambda^s)(x) \\
	& =  \int_{\Omega} \frac{\bbpr{f,\bm{\nu}}_x(Z)}{\dd \Leb^d}(x) \dd x + \int_{\cl \Omega} \frac{\bbpr{f,\bm{\nu}}_x(Z)}{\dd \lambda^s}(x) \dd \lambda^s(x)\,.
	\end{align*}
	An $x$-point-wise application of the Jensen inequalities in Theorem~\ref{thm:J} with the family of integrands $\{h(\xi,z)\}_x \coloneqq \{f(x,\xi,z)\}_x$ on the right-hand side above yields (notice that it suffices to perform this at all $x \in \cl \Omega$ where the conclusion of Theorem~\ref{thm:J} holds) 
	\begin{align*}
	\liminf_{j \to \infty} I^{\eps_j}(\mu_j) & \ge \int_{\Omega} f_{*\Acal}\bigg(x,\frac{\dd \mu}{\dd \Leb^d}(x)\bigg) \dd x + \int_{\cl \Omega} (f^\infty)_{*\Acal}\bigg(x,\frac{\dd \bpr{\nu}_x}{\dd (\lambda^\nu)^s}(x)\bigg) \dd \lambda^s(x) \\
	& \ge \int_{\Omega}f_{*\Acal}\bigg(x,\frac{\dd \mu}{\dd \Leb^d}(x)\bigg) \dd x + \int_{\cl \Omega} (f^\infty)_{*\Acal}\bigg(x,\frac{\dd \mu}{\dd |\mu^s|}(x)\bigg) \dd |\mu^s|(x)\,. 
	\end{align*}
	Here, to reach the second inequality we have used that $|\dpr{\id_{\R^N},\nu_x}|(\lambda^\nu)^s \equiv |\mu^s|$ and that $\frac{\dd |\mu^s|}{\dd \lambda^s}(x) = 0$ for $\lambda^*$-a.e. $x \in \cl \Omega$ where $\lambda^*$ is the singular part of $\lambda^s$ with respect to $|\mu^s|$.
	
	Lastly, it follows from Proposition~\ref{prop:exinftyhom} that $(f^\infty)_{*\Acal} \ge (f_{*\Acal})^\infty$ whence we conclude
	\[
	\liminf_{j \to \infty} I^{\eps_j}(\mu_j) \ge\int_{\Omega} f_{*\Acal}\bigg(x,\frac{\dd \mu}{\dd \Leb^d}(x)\bigg) \dd x + \int_{\cl \Omega} (f_{*\Acal})^\infty\bigg(x,\frac{\dd \mu}{\dd |\mu^s|}(x)\bigg) \dd |\mu^s|(x)\,.
	\]
	This finishes the proof.\qed
	
	\appendix
	
	\section{Background on Young measures}
	
	\subsubsection{Classical Young measures}
	Let $\Omega$ be an open set in $\R^d$ and let $(u_k)_{k \in \Nbb}$ be a uniformly bounded sequence in $\Lrm^1(\Omega)$. Qualitatively speaking, there 
	are two main reasons why the sequence fails to converge strongly in $\Lrm^1(\Omega)$, namely oscillation and concentration. The first occurs when 
	fluctuations average out. For example, if $d = 1$, the sequence
	\begin{equation}\label{eq:intro_osc}
	u_k = \sin(kx) \quad k = 1,2,\dots
	\end{equation}
	converges weakly to the constant function zero, while its mass remains constant and strictly positive on the interval $[0,2\pi]$. This corresponds to an \emph{oscillation} behavior. 
	The \emph{generalized surface} measures, introduced by L.~C. Young~\cite{young1937generalized-cur,young1942generalized-sur,young1942generalized-surII} and  nowadays known as (classical) Young measures, are 
	powerful measure-theoretic tools to understand oscillations. In an informal manner, one can define {the} Young measure associated to a weakly convergent 
	sequence $(u_k)_{k \in \Nbb} \subset \Lrm^1(\Omega;\R^N)$ as the family of probability measures $\{\nu_x\}$, parameterized by $x \in \Omega$, with the fundamental 
	property that
	\begin{equation}\label{eq:intro_1}
	\int_\Omega f(x,u_k(x)) \dd x \to \int_\Omega \Big(\int_{\R^N} f(x,z) \dd \nu_x(z) \Big) \dd x \quad \text{for all $f \in \Crm_0(\Omega \times \R^N)$}\,.
	\end{equation} 
	Notice that $\nu_x = \delta_{u(x)}$ when $u_k$ converges strongly to $u$ (it suffices to assume convergence in measure). Thus, the total variation measure 
	$|\delta_{u(x)} - \nu_x|$ gives a sense of how rapidly the sequence oscillates around $x$. Moreover, property~\eqref{eq:intro_1} makes of Young measures a 
	natural candidate to represent solutions of variational integral problems, which, may otherwise have no solution in their respective domain of definition.

	Young realized that studying the weak convergence of the \emph{surfaces} \enquote{$\graph u_k$} is the right way to overcome the incompatibility of weak convergence with nonlinear functionals. 
	The reasoning behind this claim is the following. The uniformly distributed measure $\Gamma_k$ which is concentrated on the set $\graph u_k \subset \Omega \times \R^N$ is formally expressed by the push-forward measure $(\id, u_k)_\# \Leb^d \restrict \Omega$. Provided   that
	\begin{equation}\label{eq:intro_2}
	\Gamma_k \toweakstar \Gamma \quad \text{as measures in $\Omega \times \R^N$}, 
	\end{equation}
	the convergence in~\eqref{eq:intro_1} can be written as $\dpr{f,\Gamma_k} \to \dpr{f,\Gamma}$ in terms of the duality $(\,\Crm_0(\Omega \times \R^N)\,,\,\M(\Omega \times \R^N)\,)$. The Young measure $\nu_x$ is then nothing else than the \emph{slice} of $\Gamma$ at a point $x \in \Omega$. This reasoning justifies 1) Young's original definition of \emph{generalized surface}, and 2) the \emph{probabilistic interpretation} $\nu_x(A) \approx \lim_{k \to \infty} \Pbb(\setn{u_k(y) \in A}{\text{for $y$ about $x$}})$, expressed rigorously by the limit
	\begin{equation}\label{eq:interpret}
	\nu_x(A) = \; \lim_{\delta \searrow 0}\;\Big( \lim_{k \to \infty} \int_{B_\delta(x)} \chi_A(u_k) \dd y\Big) \qquad A \subset \R^N \,.
	\end{equation}
	%
	
	
	\subsubsection{Generalized Young measures}
	
	Albeit powerful, the notion of Young measure is somehow unsatisfactory since it relies on the equi-integrability of minimizing sequences.
	There is a second phenomenon  that hinders strong convergence and it corresponds to \emph{concentration} of mass. 
	The reader may think of a unit mass distribution at the point $x = 0$ (the Dirac mass $\delta_0$ centered at the origin) and let it evolve according to the heat flow, which is highly regularizing and mass preserving. The solution at a time $t > 0$ is given by the gaussian
	\begin{equation}\label{eq:intro_g}
	v_t = \frac{1}{4\pi t}\exp\Big({\frac{|x|^2}{4t}}\Big)\,.
	\end{equation}
	As we go backwards in time, say with the sequence $u_k \coloneqq v_{1/k}$, the sequence $(u_k)_{k \in \Nbb}$ weak-$*$ converges (in the sense of measures) to $\delta_0$. Therefore, $(u_k)_{k \in \Nbb}$ is a sequence of probability measures in $\Lrm^1(\R^d)$ which converges strongly to the zero function in $\R^d \setminus \{0\}$ and nevertheless fails to converge strongly at precisely the point $x = 0$. It is worth to mention that loss of compactness of an $\Lrm^1$-bounded sequence (with respect to the strong topology) does not correspond exclusively to oscillation or concentration, but rather to a combination of both. 
	The understanding of this scenario is part of the seminal work of DiPerna \& Majda~\cite{diperna1987oscillations-an}, which was motivated by evidence pointing that beyond a critical time $T > 0$, the solutions $v_\eps$ of the Navier-Stokes equations (with Reynolds number $\eps^{-1}$) tend to develop wild oscillations as well as concentration effects. Therefore, suggesting that $v_\eps$ weak-$*$ converges (but not strongly) to $v$ a solution of the Euler equation. In their effort to understand the complexity of the flow, they introduced a notion of \emph{measure-valued} solution for the 3-$d$ incompressible Euler equation. 
	To define  \emph{measure-valued} solutions, DiPerna \& Majda extended Young's ideas and introduced \emph{generalized Young measures} (see also~\cite{alibert1997non-uniform-int}). 
	
	Let us discuss briefly their construction and its differences with respect to Young's construction. First, notice that the classical characterization~\eqref{eq:intro_1} fails to deal with concentration of mass. Indeed, taking the reversed heat flow~\eqref{eq:intro_g}, we readily check the limit \enquote{forgets} to distinguish the point $x = 0$ since
	\[
	\int_{B_1} f(x,u_k(x)) \dd x \to \int_{B_1} f(x,0) \dd x \quad \text{for all $f \in \Crm_0(B_1 \times \R^d)$}.
	\]
	This happens because the sequence is tested with bounded integrands. The general idea behind their construction is to test with the \emph{largest} family of functions where one can hope to compute the limit in the left-hand side.
	Due to the $\Lrm^1$-boundedness, the natural candidates are integrands satisfying a uniform linear-growth condition, that is,  
	\[
	|f(x,z)| \le M(1 + |z|) \quad \text{for some $M > 0$}.
	\]
	However, in spite that $\Gamma_k \toweakstar \Gamma$ as in~\eqref{eq:intro_2}, we cannot ensure $
	\dpr{\Gamma_k, f} \to \dpr{\Gamma, f}$ as before. This owes to the fact that $f \notin \Crm_0(\Omega \times \R^N)$ and hence the attempted pairing above is not in the correct duality. The turn around to this problem rests in the following compactification argument. Since $\Omega$ is open and bounded, 
	the Stone--\v Cech compactification of $X = \Omega \times \R^N$ reduces to
	\[
	\beta X = \cl \Omega \times \beta \R^N,
	\]
	where $\beta \R^N$ is the result of glueing the infinity points at every direction to $\R^N$. In this way $g \in \Crm(\beta X)$ if and only if $g$ is uniformly bounded on $X$, and, it can be continuously extended at every direction of infinity. Since we imposed a linear growth condition on $f$, the function
	\[
	\tilde f(x,z) = \frac{f(x,z)}{1 + |z|} 
	\]
	is uniformly bounded on $X$. To verify $\tilde f \in \Crm(\beta X)$ we require that $\tilde f$ can be extended continuously to $\beta X$, or equivalently, that the \emph{recession function}
	\[
	f^\infty(x,\hat z) \coloneqq \lim_{\substack{x' \to x\\z' \to \hat z\\t \to \infty}} \frac{f(x',tz')}{t} \;\; \text{exists in $\R$ for all $x  \in \cl \Omega$ and $\hat z \in \Bbb^N$},
	\]
	where $\Bbb^N$ is the closed unit ball of $\R^N$. The next step is to balance the additional weight coming from the transformation $f \mapsto \tilde f$ by defining $\tilde \Gamma_k \coloneqq (\id, u_k)_\#[(1 + |u_k|) \Leb^d]$, which is again a uniformly bounded sequence in $\M(\beta X)$. Neglecting the pass to further subsequences, we may assume that $\tilde \Gamma_k$ weak* converges to some Radon measure $\tilde \Gamma$ on $\beta X$.
	Then, the Riesz--Markov--Kakutani representation theorem ensures that $\tilde f$ and $\tilde \Gamma_k$ are in duality and thus
	\[
	\dpr{\Gamma_k,f} = \dpr{\tilde \Gamma_k,\tilde f} \to \dpr{\tilde \Gamma,\tilde f}\,.
	\]
	Let us now consider the canonical projection $\pi : \cl\Omega \times \beta \R^N \to \cl\Omega$ and let $\tilde \lambda = \pi_* \tilde \Gamma$ be the associated push-forward measure of $\tilde \Gamma$. Similarly to Young's construction, a slicing argument and the convergence above yields the limit representation
	\begin{equation*}
	\int_\Omega f(x,u_k(x)) \dd x \to \int_{\cl \Omega} \bigg(\int_{\beta \R^N} \tilde f(x,z) \dd \tilde \nu_x(z) \bigg) \dd \tilde \lambda(x) \quad \text{for all $f \in \Crm_b(\Omega \times \R^N)$}\,.
	\end{equation*} 
	%
	In fact, one can exploit the topological isomorphism $\beta \R^N \cong \Bbb^N$ and the idea that $f^\infty$ is the trace of $\tilde f$ at infinity to re-write the right-hand side above in the form
	\begin{equation*}
	\int_{\cl \Omega} \underbrace{\bigg(\int_{\R^N} f(x,z) \dd \nu_x(z) \bigg) \dd x}_{\textrm{pure oscillation}}
	+ \int_{\cl \Omega} \underbrace{\bigg( \int_{\partial \Bbb^{N}} f(x,z)^\infty \dd \nu_x^\infty(z) \bigg) \dd \lambda(x)}_{\textrm{concentration}}.
	\end{equation*} 
	This construction leads to the following non-rigorous {definition}. The generalized Young measure associated to a sequence $(u_k)_{k \in \Nbb}$ is a triple $(\lambda,\nu,\nu^\infty)$ conformed by a positive measure $\lambda$ satisfying
	\begin{equation}
	(1 + |u_k|) \restrict \Omega \toweakstar \lambda \quad \text{as measures on $\cl\Omega$},
	\end{equation}
	and two families $\nu = \{\nu_x\}$, $\nu^\infty = \{\nu_x^\infty\}$ of probability measures (parameterized by $x \in \cl \Omega$) satisfying the fundamental property that
	\begin{equation}
	\begin{split}
	\int_\Omega f(x,u_k(x)) \dd x \quad \to  \quad & \int_{\cl \Omega} \bigg(\int_{\R^N} f(x,z) \dd \nu_x(z) \bigg) \dd x \\
	& +  \int_{\cl \Omega}\bigg( \int_{\partial \Bbb^{N}} f(x,z)^\infty \dd \nu_x^\infty(z) \bigg) \dd \lambda(x)
	\end{split}
	\end{equation}
	for all $f \in \Crm_b(\Omega \times \R^N)$.

	Notice that the correspondent probabilistic interpretation for $\nu_x$ remains the same as in~\eqref{eq:interpret}. The term $\lambda_\nu(\{x\})$ can be interpreted as the amount of mass carried by the sequence $(|u_k|)_{k \in \N}$ about $x$, and $\nu_x^\infty(B) \approx \lim_{k \to \infty} \Pbb(\setn{u_k(y)/|u_k|(y) \in  B}{\text{for $y$ about $x$}})$ or 
	\begin{equation}\label{eq:interpret2}
	\nu_x(B) = \lim_{\delta \searrow 0} \Big(\lim_{k \to \infty} \int_{B_\delta(x)} \chi_{B}(u_k / |u_k|) \dd y\Big) \qquad B \subset \Sbb^{N-1}\,.
	\end{equation}

	\subsubsection{Classical two-scale Young measures}
	
	There is yet another extension of the classical setting which arises from the following question: can we quantify \emph{how fast} or \emph{how often} oscillation occurs with respect to a given parameter? In good part, this is motivated by materials science problems such as the description of \emph{macroscopic} and \emph{microscopic} properties of composite materials. The mathematical approach is that of \enquote{homogenization} to which a particular model corresponds the description of weak (or weak*) limits of sequences of the form
	\begin{equation}\label{eq:intro_hom}
	f(x,x /\eps,u_\eps(x)) \quad \text{with $\eps \searrow 0$}\,.
	\end{equation} 
	Here, the function $f$ is assumed to be $[0,1]^d$-periodic in its second argument. 
	One often refers to $x$ as the \emph{macroscopic} scale and to $x / \eps$ as the \emph{microscopic} one, thence also called \emph{two-scale} analysis. 
	To put in this in context with our previous examples~\eqref{eq:intro_osc} and~\eqref{eq:intro_g} we set $k^{-1}$ to play the role of $\eps$ in~\eqref{eq:intro_hom}. In the first case oscillations are uniformly distributed in space with period $2\pi k^{-1}$, while in the second example we can argue \enquote{most} of the mass carried by $u_k$ is concentrated in a neighborhood of radius $k^{-1}$ around the origin. In general, such information is not recorded by generalized Young measures. This is portrayed by the following $1$-dimensional example. Fix $\alpha > 0$, and consider the purely oscillatory sequence
	\[
	u_{\alpha,\eps} = \sin(\eps^{-\alpha} x) \qquad k = 1,2,\dots.
	\]
	Clearly, each $\alpha$ significantly changes the scale at which oscillations occur as $k$ tends to $\infty$. However, a change of variables shows that regardless of the value of $\alpha$, the associated Young measure to $(u_{\alpha,\eps})_\eps$ is given by the homogeneous family $\{\nu_x\}_{x \in \R}$ where $\nu_x = \overline{\delta_g}$ for every $x \in \R$ and $\overline{\delta_g}$ is the probability measure satisfying
	\[
	\dpr{\overline{\delta_g},\phi} \coloneqq \frac{1}{2\pi}\int_0^{2\pi} \phi(\sin y) \dd y \quad \text{for all $\phi \in \Crm_c(\R)$}.
	\]
	Hence, the search for a Young measure able to distinguish different length-scales of convergence (one that would explicitly depend on $\alpha$ in the previous example). Taking a step towards the solution
	of this problem, 
	Pedregal introduced the notion of \emph{two-scale} Young measure as a way to represent weak limits of \emph{equi-integrable} sequences of the form~\eqref{eq:intro_hom}.  
	Let us briefly recall the the ideas behind Pedregal's construction.   
	First, let $K \subset \R^d$ be a compact set and consider an additional sequence $\{v_\eps : \Omega \to K\}$. Then, \emph{extend} the original equi-integrable sequence $(u_\eps)_\eps$ to the sequence of pairs
	\[
	(\,(\,v_\eps\,,\,u_\eps\,)\,)_\eps \qquad \eps \searrow 0.
	\]
	This sequence is also equi-integrable and therefore it can be fully analyzed within the framework of classical Young measures. 
	For the sake of our discussion, we assume that 
	\begin{align}
	u_\eps \quad & \toY \quad \nu = \{\nu_x\}_{x \in \Omega} &\subset& \; \mathrm{Prob}(\R^N)\\
	v_\eps \quad &  \toY \quad \rho = \{\rho_x\}_{x \in \Omega} &\subset& \; \mathrm{Prob}(K)\\
	(\,u_\eps\,,\,v_\eps\,) \quad & \toY \quad \sigma = \{\sigma_{x}\}_{x\in\Omega} &\subset& \;\mathrm{Prob}(\Omega \times K),
	\end{align}
	where \enquote{\,$\toY$\,} means the sequence in the left hand-side generates the Young measure in the right-hand side in the sense of~\eqref{eq:intro_1}.
	Since we can recover $v_\eps$ by projecting into the first coordinate of the pair $(v_\eps,u_\eps)$, a well-known disintegration argument yields the existence of a family of probability measures ${\tilde \nu} = \{\tilde \nu_{x,\xi}\}_{x \in \Omega,\xi \in K} \subset \mathrm{Prob}(K)$ such that 
	\begin{equation}\label{eq:intro3}
	\sigma_x = \rho_x \otimes \tilde \nu_{x,\xi}, \quad (x,\xi) \in K \times \Omega.
	\end{equation}
	If the sequences $(v_\eps)_\eps$, $(u_\eps)_\eps$ oscillate at different length-scales, for instance when one oscillates considerably faster than the other (as $\eps \todown 0$), then the semi-product \eqref{eq:intro3} simplifies to the classical product 
	\begin{equation}\label{eq:intro1}
	\sigma_x = \rho_x \otimes \nu_x \quad \text{where \; $\sigma_x(E \times F) = \rho_x(E)\cdot\nu_x(F)$}, 
	\end{equation}
	which \emph{does not} contain any additional information on $\nu$. However, if $v_\eps$ and $u_\eps$ oscillate at a comparable length-scale (for small $\eps$'s), then the \emph{joint} Young measure $\sigma$ will record how \enquote{$u_\eps$ compares to $v_\eps$} at the length-scale where both of their oscillations interact. A priori, one may consider \emph{any} candidate for the test functions $(v_\eps)$. For example, information on the oscillation of $u_\eps$ at $\eps$-length-scale in the direction of a fixed vector $\zeta \in \R^d$ may be captured by considering the indicator functions 
	\[
	v_\eps(x) = \chi_{[0,1/2)}\Big(\areaB{\frac{(x\cdot \zeta)}{\eps} + a}\Big), \quad a \in \R^d,
	\]
	where for a point $x \in \Omega \subset \R^d$ we have denoted by $\area{x}$ its representative class in $K = \Tbb^d$ (the flat $d$-dimensional torus). Motivated by problems in homogenization theory, in particular the behavior of sequences of the form~\eqref{eq:intro_hom}, Pedregal studied (classical) Young measures generated by sequences of pairs 
	\[
	w_\eps(x) \coloneqq \Big\{\areaB{\frac{x}{\eps}}\,,\,u_\eps(x) \Big\},
	\]
	and called the resulting family $\tilde{\nu} = \{\tilde \nu_{x,\xi}\}_{x \in \Omega, \xi \in \Tbb^d}$ in~\eqref{eq:intro3}, the \emph{two-scale} Young measure generated by $\{u_\eps\}$. 
	Since the Young measure generated by the sequence $(\area{\frarg/\eps})_\eps$ is precisely the Lebesgue measure restricted to $\Tbb^d$ (as the measure smears evenly throughout the torus), then~\eqref{eq:intro3} reads 
	\begin{equation}\label{eq:intro11}
	\sigma_x = (\Leb^d \restrict \Tbb^d) \otimes \tilde \nu_{\xi,x}.
	\end{equation}
	On the other hand $w_\eps \toY \sigma$, and hence the identity above leads to the limit representation 
	\begin{align*}
	\int_\Omega f(x,{x}/{\eps},u_\eps(x)) \dd x & \to  \int_{\Omega} \int_{\Tbb^d \times \R^N} f(x,\xi,z) \dd \sigma_{x}(\xi,z) \\
	& =  \int_\Omega \bigg(\int_{\Tbb^d} \bigg(\int_{\R^N} f(x,\xi,z) \dd \tilde \nu_{x,\xi}(z)\bigg) \dd \xi \bigg)\dd x,
	\end{align*}
	which holds for all continuous integrands $f : \Omega \times \Tbb^d \times \R^N \to \R$ with uniform $\R^N$-linear growth in its third component. This comprises the construction of two-scale Young measures.

	\bibliographystyle{plain}
	\bibliography{zotero}
\end{document}